\newcommand{\Rmnum}[1]{\expandafter\@slowromancap\romannumeral #1@}
\newtheorem{theorem}{Theorem}[section]
\newtheorem{problem}[theorem]{Problem}
\newtheorem{lemma}[theorem]{Lemma}
\newtheorem{claim}{Claim}[theorem]
\newtheorem{remark}[theorem]{Remark}
\begin{document}
	
	\title{(1,0,0)-colorability of planar graphs without cycles of length 4 or 6}
	
	\vspace{3cm}
	\author{Ligang Jin\footnotemark[1], Yingli Kang\footnotemark[2], Peipei Liu\footnotemark[3], Yingqian Wang\footnotemark[1]}
	\footnotetext[1]{Department of Mathematics,
		Zhejiang Normal University, Yingbin Road 688,
		321004 Jinhua,
		China; 
		ligang.jin@zjnu.cn, yqwang@zjnu.cn}
	\footnotetext[2]{Department of Mathematics, Jinhua Polytechnic, Western Haitang Road 888, 321017 Jinhua, China; ylk8mandy@126.com}
	\footnotetext[3]{Hangzhou Weike software engineering Co. Ltd., Western Wenyi Road 998, 310012 Hangzhou, China; 784873860@qq.com}
		
	\date{}
	
	\maketitle

\begin{abstract}
A graph $G$ is $(d_1,d_2,d_3)$-colorable if the vertex set $V(G)$ can be partitioned into three subsets $V_1,V_2$ and $V_3$ such that for $i\in\{1,2,3\}$, the induced graph $G[V_i]$ has maximum vertex-degree at most $d_i$. So, $(0,0,0)$-colorability is exactly 3-colorability.
	
The well-known Steinberg's conjecture states that every planar graph without cycles of length 4 or 5 is 3-colorable. 
As this conjecture being disproved by Cohen-Addad etc. in 2017, 
a similar question, whether every planar graph without cycles of length 4 or $i$ is 3-colorable for a given $i\in \{6,\ldots,9\}$, is gaining more and more interest.
In this paper, we consider this question for the case $i=6$ from the viewpoint of improper colorings. More precisely, we prove that every planar graph without cycles of length 4 or 6 is (1,0,0)-colorable,
which improves on earlier results that they are (2,0,0)-colorable and also (1,1,0)-colorable, and on the result that planar graphs without cycles of length from 4 to 6 are (1,0,0)-colorable.
\end{abstract}

\textbf{Keywords:}
planar graphs, (1,0,0)-colorings, cycles, discharging, super-extension

\section{Introduction}
The graphs considered in this paper are finite and simple. 
A graph is planar if it is embeddable into the Euclidean plane. A plane graph $(G,\Sigma)$ is a planar graph $G$ together with an embedding $\Sigma$ of $G$ into the Euclidean plane, that is, $(G,\Sigma)$ is a particular drawing of $G$ in the Euclidean plane.
In what follows, we will always say a plane graph $G$ instead of $(G,\Sigma)$, which causes no confusion since in this paper no two embeddings of the same graph $G$ will be involved in.  

In the field of 3-colorings of planar graphs, one of the most active topics is about a conjecture proposed by Steinberg in 1976: 
every planar graph without cycles of length 4 or 5 is 3-colorable.
There had been no progress on this conjecture for a long time, until Erd\"{o}s \cite{Steinberg1993211} suggested a relaxation of it:
does there exist a constant $k$ such that every planar graph without cycles of length from 4 to $k$ is 3-colorable?
Abbott and Zhou \cite{AbbottZhou1991203} confirmed that such $k$ exists and $k\leq 11$.
This result was later on improved to $k\leq 9$ by
Borodin \cite{Borodin1996183} and, independently, by Sanders and Zhao \cite{SandersZhao199591}, and to
$k\leq 7$ by Borodin etc. \cite{4567}.
Steinberg's conjecture was recently disproved by Cohen-Addad etc. \cite{CA-disprove}. 
Hence, associated to Erd\"{o}s' relaxation, only one question remains unsettled.
\begin{problem}\label{pro_4to6}
	Is it true that planar graphs without cycles of length from 4 to 6 are 3-colorable?
\end{problem}

A more general problem than Steinberg's Conjecture was formulated in \cite{479}:
\begin{problem} \label{pro_4-and-i}
What is the maximal subset $\cal{A}$ of $\{5,6,\cdots,9\}$ such that for $i\in \cal{A}$, every planar graph with cycles
of length neither 4 nor $i$ is 3-colorable?
\end{problem}
The refutal of Steinberg's Conjecture shows that $5\notin \cal{A}$.  
For any other $i$, the question whether $i\in \cal{A}$ is still unsettled.
In this paper, we consider such question for the case $i=6$, i.e., the question whether every planar graph without cycles of length 4 or 6 is 3-colorable.

Let $d_1,d_2$ and $d_3$ be non-negative integers. A graph $G$ is $(d_1,d_2,d_3)$-colorable if the vertex set $V(G)$ can be partitioned into three subsets $V_1,V_2$ and $V_3$ such that for $i\in\{1,2,3\}$, the induced graph $G[V_i]$ has maximum vertex-degree at most $d_i$. 
The associated coloring, assigning the vertices of $V_i$ with the color $i$ for $i\in\{1,2,3\}$, is an improper coloring, a concept which allows adjacent vertices to receive the same color. Clearly, $(0,0,0)$-colorability is exactly 3-colorability. Improper coloring is a relaxation of proper coloring, providing us a way to approach the solution to some hard conjectures.
It has been combined with many different kinds of colorings of graphs, such as improper $k$-colorings, improper list colorings, improper acyclic colorings and so on.

The coloring of planar graphs gain particular attention. 
There are a serial of known results on the $(d_1,d_2,d_3)$-colorability of planar graphs, motivated by Steinberg's conjecture. For example,
Cowen etc. \cite{Cowen} proved that planar graphs are
$(2,2,2)$-colorable. 
Xu \cite{A35-111} showed that planar graphs with
neither adjacent triangles nor cycles of length 5 are
$(1,1,1)$-colorable.
So far, the best known results for planar graphs having no cycles of length 4 or 5 are that, they are (1,1,0)-colorable \cite{45-110-Hill,45-110-Xu} and also (2,0,0)-colorable \cite{45-200}, improving on some results in \cite{45-300,A35-111}. Because of the refutal of Steinberg's conjecture, the following question is the only one in this direction that remains open.
\begin{problem}
	Is it true that planar graphs having no cycles of length 4 or 5 are $(1,0,0)$-colorable?
\end{problem}

Analogously, for planar graphs having no cycles of length 4 or 6, it is known that they are (1,1,0)-colorable \cite{456-100,46-300and110} and also (2,0,0)-colorable \cite{46-200}. 
In this paper, we prove that they are further (1,0,0)-colorable, which improves on these two results.
\begin{theorem} \label{thm45tri7}
	Planar graphs with neither 4-cycles nor 6-cycles are (1,0,0)-colorable.
\end{theorem}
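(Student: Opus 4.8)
\section*{Proof plan}

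The plan is to argue by contradiction through a minimal counterexample combined with a discharging analysis, the new ingredient being a strengthened colouring statement (the ``super-extension'') that powers the reducibility step. Assume the theorem fails and let $G$ be a counterexample with $|V(G)|+|E(G)|$ minimum: a plane graph containing no $4$-cycle and no $6$-cycle that admits no $(1,0,0)$-colouring. First I would record the cheap structural consequences of the hypothesis. Forbidding $4$-cycles gives that $G$ has no $4$-face and that no two triangles share an edge; forbidding $6$-cycles gives that $G$ has no $6$-face and --- the fact that will drive the discharging --- that \emph{no $3$-face is adjacent to a $5$-face}, since the union of their boundaries would be a $6$-cycle. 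Hence, once $2$-connectivity is established, every $3$-face borders only faces of length at least $7$, and a $3$-vertex is incident to at most one $3$-face (two of its three incident faces would otherwise share an edge). I would also catalogue the short ``near-triangle'' patterns whose closure contains a $C_6$, for use in the later case analysis.

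Next come the reducibility lemmas, which are the bulk of the argument. One shows successively that $G$ is $2$-connected, that $\delta(G)\geq 3$, and then that a list of local configurations around $3$-vertices and $4$-vertices cannot occur in $G$. Each is treated by removing or locally modifying a small set $S$ of vertices or edges, observing that the resulting graph still has no $C_4$ and no $C_6$, invoking minimality to $(1,0,0)$-colour it, and then re-colouring $S$. Here colours $2$ and $3$ play the role of proper colours while colour $1$ tolerates one monochromatic neighbour, so a vertex $v$ can always be coloured unless its coloured neighbours simultaneously block $2$, block $3$, and saturate $1$ (two neighbours coloured $1$, at least one already having a $1$-neighbour). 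Because a verbatim extension of a colouring of $G-S$ is usually not enough, I would prove a \emph{super-extension lemma} (roughly: the smaller graphs arising in the reductions are not merely $(1,0,0)$-colourable, but admit $(1,0,0)$-colourings extending any admissible precolouring of a bounded boundary set --- a short path, or the rim of a face --- possibly after a controlled recolouring near it). Running the minimal-counterexample argument against this stronger statement is what makes enough configurations reducible, for instance $3$-vertices incident to a triangle, triangles carrying several $3$-vertices, or bad clusters of $5$-faces.

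With the reducible configurations in hand, I would set up a discharging argument. Assign each vertex $v$ the initial charge $\ch(v)=d(v)-4$ and each face $f$ the initial charge $\ch(f)=d(f)-4$; by Euler's formula the total is $-8<0$, and since there are no $4$-faces the only elements with negative initial charge are $3$-vertices and $3$-faces, each carrying $-1$. Then redistribute: each $3$-face pulls $\tfrac13$ across each of its three edges from the incident face of length at least $7$, thereby reaching charge $0$; $3$-vertices pull charge from their incident large faces and from their neighbours of degree at least $5$; and faces of length at least $7$ together with vertices of degree at least $5$, which hold surplus charge, donate. Using the reducible configurations --- which bound precisely the quantities that could create a deficit, such as how many $3$-faces can cluster at a vertex, how many $3$-vertices a face can carry, and how a $5$-face can be surrounded --- one verifies that $\ch^{\ast}(x)\geq 0$ for every vertex and every face after discharging. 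Summing over all vertices and faces then contradicts the total being $-8$, so $G$ cannot exist.

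The main obstacle, as is typical in this line of work, is the reducibility analysis. Because we forbid only $C_4$ and $C_6$ --- not all of $C_4$, $C_5$, $C_6$, as in the earlier $(1,0,0)$-colourability result for planar graphs without cycles of length from $4$ to $6$ --- pentagons and $5$-faces are now ubiquitous, many configurations that were easily reducible before are no longer obviously so, and both the catalogue of unavoidable configurations and the super-extension strengthening must be pushed considerably further. Getting the colouring-extension bookkeeping to close around $5$-faces and around $3$-vertices incident to a triangle is where the real effort --- and the main risk of a gap --- lies.
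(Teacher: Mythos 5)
Your plan follows the same overall strategy as the paper: minimal counterexample, a strengthened ``super-extension'' statement to induct on, a list of reducible configurations, and discharging. But the plan leaves unspecified---and in one place misstates---exactly the content that carries the proof. The paper's super-extended theorem is not about extending a precoloring of ``a short path, or the rim of a face, possibly after a controlled recolouring near it''; it is the precise statement that a $(1,0,0)$-coloring of the boundary cycle $D$ of the unbounded face super-extends whenever $D$ is \emph{good}, meaning $|D|\leq 11$ and $D$ contains no claw, edge-claw, path-claw or pentagon-claw. Identifying this class is the crux: $9$-, $10$- and $11$-cycles carrying such interior partitions genuinely fail to extend, so any induction on ``bounded boundary sets'' must exclude them explicitly, and ``controlled recolouring near the boundary'' is not available, since the boundary coloring is the induction hypothesis and cannot be altered. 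Moreover the ``super'' condition (every non-boundary neighbour of a boundary vertex receives a different colour from it) is what lets the strengthened statement imply the theorem: one colours a triangle $T$ (or applies Gr\"otzsch's theorem if there is none) and extends independently to the interior and exterior of $T$ without a colour-$1$ vertex of $T$ acquiring two colour-$1$ neighbours. Your plan never explains how the lemma yields the theorem, and without the ``super'' requirement the gluing at $T$ can fail.

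A second concrete gap is in the discharging. If the minimal counterexample is taken against the strengthened statement---as it must be for the reductions to close---then the graph being discharged has a distinguished precoloured outer cycle, and the outer face together with the external vertices (including the $2$-vertices of $D$, which your $\delta\geq 3$ claim does not cover) must receive extra charge and their own transfer rules; the paper devotes rules $R10$--$R12$ and the entire Case~1 analysis of faces to this. Your assignment $\mathrm{ch}(x)=d(x)-4$ with total $-8$ and purely local rules makes no provision for the boundary, so the verification would fail there. Neither issue is fatal to the strategy---they are exactly what the paper supplies---but as written the proposal has genuine gaps at the single most important design decision (the exact statement of the super-extension theorem, including which boundary cycles admit it) and at the boundary bookkeeping of the discharging.
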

 Towards Problem \ref{pro_4to6}, Wang etc. \cite{456-100} shown that planar graphs having no cycles of length from 4 to 6 are $(1,0,0)$-colorable. Theorem \ref{thm45tri7} improves on this result as well.
To our best knowledge, Theorem \ref{thm45tri7} is the first result on $(1,0,0)$-colorability of planar graphs with neither 4-cycles nor $i$-cycles for $i\in\{5,6,7,8,9\}$, motivated by Problem \ref{pro_4-and-i}.

The proof of this main result uses discharging method for improper colorings. 
In Section \ref{sec_notations}, we formulate a proposition that is stronger than Theorem \ref{thm45tri7}, namely super-extended theorem. Section \ref{sec_proof} addresses the proof of the super-extended theorem, which consists of two parts: reducible configurations and discharging procedure.
For more information on discharging method, we refer to \cite{West2013,458,469}.

\section{Super-extended theorem} \label{sec_notations}
Let $G$ be a plane graph. For a set $S$ such that $S\subseteq V(G)$ or $S\subseteq E(G)$, let $G[S]$ denote the subgraph of $G$ induced by $S$.
Let $C$ be a cycle of $G$. Denote by $int(C)$ (resp. $ext(C)$) the set of vertices lying inside (resp. outside) $C$. 
Let $H$ be a subgraph of $G$ whose edges lie inside $C$ (ends on $C$ allowed) and let $H_0=H-V(C)$, such that $d_H(v)=3$ for each $v\in V(H_0)$.
Call $H$ a \emph{claw} of $C$ if $H_0$ is a vertex, an \emph{edge-claw} if $H_0$ is an edge, a \emph{path-claw} if $H_0$ is a path of length 2, and a \emph{pentagon-claw} if $H_0$ is a pentagon.

Let $\mathcal{G}$ denote all the connected plane graphs without cycles of length 4 or
6. For a cycle $C$, whose length is at most 11, of a graph from $\mathcal{G}$, $C$ is good if it contains no claws, edge-claws, path-claws or pentagon-claws; bad otherwise.

Let $G$ be a graph, $H$ a subgraph of $G$,
and $\phi$ a $(1,0,0)$-coloring of $H$. We say that $\phi$ can be
\emph{super-extended} to $G$ if $G$ has a $(1,0,0)$-coloring $c$ such $c(u)=\phi(u)$ for each $u\in V(H)$ and that $c(v)\neq c(w)$ whenever $v \in V(H)$, $w \in V(G) \setminus V(H)$ and
$vw\in E(G)$.

We shall prove the following theorem, called super-extended theorem, that is stronger than Theorem \ref{thm45tri7}.
\begin{theorem} (Super-extended theorem) \label{thm_main_extension}
Let $G\in \cal{G}$. If the boundary $D$ of the
unbounded face of $G$ is a good cycle, then every (1,0,0)-coloring
of $G[V(D)]$ can be super-extended to $G$.
\end{theorem}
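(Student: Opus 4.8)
The plan is to argue by contradiction: let $G$ be a counterexample minimizing $|V(G)|+|E(G)|$, so $G\in\mathcal{G}$, its outer boundary $D$ is a good cycle, and there is a $(1,0,0)$-coloring $\phi$ of $G[V(D)]$ that does not super-extend to $G$. The proof then splits into the two standard parts. In the \textbf{structural part} I would establish that such a minimal counterexample cannot contain various reducible configurations; in the \textbf{discharging part} I would assign charges $\mu(v)=\deg(v)-4$ to vertices and $\mu(f)=\ell(f)-4$ to faces (Euler's formula gives $\sum\mu=-8$), redistribute them by carefully chosen rules, and derive the contradiction that the final charge is nonnegative everywhere, or at least sums to something $\ge -8$ fails.

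First I would record the easy reductions that follow from minimality alone. Since $G$ has no $4$- or $6$-cycles it is triangle-rich only in a controlled way (two triangles cannot share an edge without creating a $4$-cycle, a triangle and a $5$-cycle sharing an edge would create a $6$-cycle, etc.), so $G$ is effectively ``sparse'' away from triangles. Standard arguments show: $G$ is $2$-connected and $D$ is an induced cycle bounding the outer face; $G$ has no separating cycle of length $\le 11$ that is good (else color one side, then super-extend into the other by minimality, using that the coloring of the shared good cycle is a valid input); vertices not on $D$ have degree $\ge 3$ (a degree-$\le 2$ interior vertex can be deleted and its color chosen last, since a $(1,0,0)$-coloring extends easily past low-degree vertices — the color class of size-$1$ slack absorbs one repeated neighbor); and more delicate ``local'' reducible configurations involving $3$-vertices, adjacent triangles-to-faces, and short chains, each handled by deleting a small piece, invoking minimality on the smaller graph, and then showing the partial coloring extends to the deleted piece by a short case analysis on the at most a few available color patterns. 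The notions of claw, edge-claw, path-claw, and pentagon-claw are precisely the obstructions that would otherwise let a short bad cycle behave like a good one, so a key lemma here is that in the minimal counterexample, every short cycle that is not the outer face is bad and moreover ``tightly'' bad — it carries a claw/edge-claw/path-claw/pentagon-claw that pins down its interior structure.

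With the configuration list in hand, the discharging argument proceeds: because $G$ has no $4$- or $6$-cycles, a face of length $5$ is the only small face besides triangles, and triangles ($\mu=-1$) and $5$-faces ($\mu=1$) together with $3$-vertices ($\mu=-1$) are the charge-deficient objects. I would route positive charge from $\ge 7$-faces and $\ge 5$-vertices to nearby triangles and $3$-vertices along incidences, with special attention to the outer face $D$ (whose charge $\ell(D)-4$ can be large and positive, but whose goodness restricts how triangles and low-degree vertices cluster near $D$). The delicate bookkeeping is to show every triangle and every $3$-vertex ends with charge $\ge 0$; this is where the reducible configurations are used as forbidden patterns to guarantee enough donors. \textbf{The main obstacle} I anticipate is exactly this interface between the absence of short cycles and the $(1,0,0)$ slack: because the ``$1$'' color class tolerates one monochromatic edge, many configurations that would be reducible for proper $3$-coloring are \emph{not} immediately reducible here, so the reducible-configuration list must be pushed further and the discharging rules made correspondingly finer — in particular, handling $3$-vertices on two triangles, and $5$-faces adjacent to triangles near the boundary, without an available good separating cycle to cut along, will require the most care. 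A secondary obstacle is making the super-extension hypothesis (not just a coloring of $G[V(D)]$, but one that stays ``legal'' into the interior) propagate correctly through each reduction, which forces the inductive statement to be the super-extended one rather than plain $(1,0,0)$-colorability.
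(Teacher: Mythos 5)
Your proposal correctly identifies the framework the paper actually uses: a minimal counterexample to the super-extended statement (not to plain colorability), a list of reducible configurations proved by deleting/identifying a small piece and re-extending, and a discharging argument; you also correctly observe that the inductive statement must be the super-extension one. But what you have written is a plan, not a proof, and the gap is essentially the entire mathematical content. The reducible configurations are not "standard arguments": the paper needs a precise classification of bad cycles (Lemma \ref{lem_bad-cycle}: only lengths 9, 10, 11, each with one of eight explicit claw/edge-claw/path-claw/pentagon-claw interiors), a splitting-path lemma controlling how short paths can cut $D$, and a dozen delicate configuration lemmas (no 4-vertex with two pendent small triangles, no wheels, no antiwheels with all-light outer neighbors, no 5-faces with all vertices light, etc.), each requiring a bespoke identification/insertion operation together with a verification that the operation creates no 4- or 6-cycles and does not make $D$ bad. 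You defer exactly these steps ("a short case analysis on the at most a few available color patterns"), and they are where the theorem lives. Likewise, no discharging rules are given, so nothing can be verified.

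Two of your concrete assertions are also wrong or misleading. First, "every short cycle that is not the outer face is bad" is false: facial triangles and 5-faces are good cycles; the correct reducibility statement is that $G$ has no \emph{separating} good cycle, and the force of the argument comes from the fact that a separating $11^-$-cycle must therefore be bad, hence have one of the eight rigid interiors. Second, your charge normalization $\mu(f)=\ell(f)-4$ gives a 5-face charge $+1$, yet you list 5-faces among the deficient objects; in any workable scheme for this problem 5-faces must \emph{receive} substantial charge (which is precisely why the paper must forbid 5-faces whose vertices are all light, and why its normalization $ch(v)=5d(v)-14$, $ch(f)=2d(f)-14$ makes 3-vertices donors with charge $+1$ and 5-faces recipients with charge $-4$). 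As it stands the proposal neither supplies the configurations nor a charge scheme that could close the count.
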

By assuming the truth of Theorem \ref{thm_main_extension}, we can easily derive Theorem \ref{thm45tri7} as follows.
We may assume that $G$ is connected since otherwise, we argue on each component.
If $G$ has no triangles, then by Three Color Theorem, $G$ is 3-colorable.
Hence, we may assume that $G$ has a triangle, say $T$.
By Theorem \ref{thm_main_extension}, we can super-extend any given (1,0,0)-coloring of $T$ respectively to its interior and exterior.

The rest of this section contributes to some necessary notations.

Let $C$ be a cycle of a plane graph and $T$ be a claw, or an edge-claw, or a path-claw, or a pentagon-claw of $C$. We call the graph $H$ consisting of $C$ and $T$ a \textit{bad partition} of $C$. Every facial cycle (except $C$) of $H$ is called a \textit{cell} of $H$. 

The length of a path is the number of edges it contains. Denote by $|P|$ the length of a path $P$, by $|C|$ the length of a cycle $C$ and by $d(f)$ the size of a face $f$.  
A \textit{$k$-vertex} (resp. $k^+$-vertex and $k^-$-vertex) is a vertex $v$ with $d(v)=k$ (resp. $d(v)\geq k$ and $d(v)\leq k$). 
Similar notations are applied for paths, cycles and faces by constitute $d(v)$ for $|P|,|C|$ and $d(f)$, respectively.

Consider a plane graph. A vertex is \textit{external} if it lies on the exterior face; \textit{internal} otherwise.
A $3^+$-vertex is \emph{light} if it is internal and of degree 3; \emph{heavy} otherwise.
Let $d_1,d_2,d_3$ be three integers greater than 2. A \emph{$(d_1,d_2,d_3)$-face} is a 3-face whose vertices are all internal and have degree $d_1, d_2$ and $d_3$, respectively.
A $k$-cycle with vertices $v_1,\ldots,v_k$ in cyclic order is denoted by $[v_1\ldots v_k]$.
Let $f=[uxy]$ be a 3-face and $v$ be a neighbor of $u$ other than $x$ and $y$. If $u$ is an internal 3-vertex, then we call $v$ an \emph{outer neighbor} of $u$ (or of $f$),
$u$ a \emph{pendent vertex} of $v$, and $f$ a \emph{pendent 3-face} of $v$.
A 3-face is \emph{weak} if it has at least one outer neighbor that is light.
A path is a \textit{splitting path} of a cycle $C$ if its two end-vertices lie on $C$ and all other vertices lie inside $C$.
A cycle $C$ is \textit{separating} if neither $int(C)$ nor $ext(C)$ is empty.

\section{The proof of Theorem \ref{thm_main_extension}} \label{sec_proof}
Suppose to the contrary that Theorem \ref{thm_main_extension} is false.
From now on, let $G=(V,E)$ be a counterexample to Theorem \ref{thm_main_extension} with the smallest $|V|+|E|$.
Thus, we may assume that the boundary $D$ of the exterior face of $G$ is a good cycle, and that there exists a (1,0,0)-coloring $\phi$ of $G[V(D)]$ which cannot be super-extended to $G$.
By the minimality of $G$, we deduce that $D$ has no chord.

Denote by $\{1,2,3\}$ the color set for $\phi$ where the color 1 might be assigned to two adjacent vertices.
We define that, to \emph{3-color} a vertex $v$ means to assign $v$ with a color from $\{1,2,3\}$ when this color has not been used by its neighbors yet; and to \emph{(1,0,0)-color} $v$ means either to 3-color $v$ or to assign $v$ with the color 1 when precisely one neighbor of $v$ is of color 1.

\subsection{Structural properties of the minimal counterexample $G$}
\begin{lemma} \label{lem_min degree}
Every internal vertex of $G$ has degree at least 3.
\end{lemma}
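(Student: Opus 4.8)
The plan is to use the minimality of $G$ via a standard deletion/identification argument. Suppose some internal vertex $v$ has degree at most $2$. Since $v$ is internal, $v \notin V(D)$, so $v$ is not colored by $\phi$. Consider $G' = G - v$. The boundary of the unbounded face of $G'$ is still $D$ (removing an internal vertex cannot change the outer face), and $G'$ is still connected: if $v$ is a cut-vertex then $d(v) = 2$ and the two components of $G' $ each meet $D$ only if... — here I would first argue that $G-v$ is connected. Actually, since $D \subseteq G - v$ and $D$ is connected, every component of $G-v$ must contain a vertex of $D$ only if... no: a component disjoint from $D$ would contradict nothing yet, so instead I would handle it by noting that if $G - v$ is disconnected, the component $G_1$ containing $D$ satisfies the hypotheses of Theorem~\ref{thm_main_extension} and has fewer vertices, while any other component $G_2$ is a smaller graph in $\mathcal{G}$ whose outer boundary can be taken to be a good cycle (or $G_2$ is $(1,0,0)$-colorable outright). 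Combining the colorings and then assigning $v$ a color is the final step.

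The cleaner route avoids the disconnection subtlety entirely: take $G' = G - v \in \mathcal{G}$ (it has no $4$- or $6$-cycles since $G$ doesn't, and it is a plane graph with the same outer face $D$). By minimality applied to each connected component meeting the relevant boundary, $\phi$ super-extends to a $(1,0,0)$-coloring $c'$ of $G'$. Now I would extend $c'$ to $v$. The point is that $v$ has at most two neighbors in $G$, and we must both $(1,0,0)$-color $v$ and respect the super-extension constraint: if $v$ is adjacent to a vertex $w \in V(D)$ we additionally need $c(v) \neq c(w)$. Since $v$ is internal and $D$ has no chord and $D$ is an induced cycle in $G$ (as the outer boundary of a graph with no short cycles — more precisely by minimality $D$ is chordless), at most... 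I would check that the forbidden-color count for $v$ is at most $2$: its $\le 2$ neighbors forbid at most $2$ colors via the $3$-coloring rule, leaving a color available, OR if two neighbors already use colors $2$ and $3$ (resp. $2$ and two neighbors forbidding the same), one uses the $(1,0,0)$-rule to give $v$ color $1$ when exactly one neighbor has color $1$. A short case analysis on $d(v) \in \{1, 2\}$ and the colors of $v$'s neighbors closes this: with $d(v)=1$ at most one color is blocked; with $d(v)=2$, if both neighbors have distinct colors from $\{2,3\}$ then color $1$ is available and used by no neighbor, while if some neighbor has color $1$ then $v$ can take color $1$ unless the other neighbor also has color $1$, in which case a color in $\{2,3\}$ not on the other neighbor works.

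The only genuine obstacle is the super-extension bookkeeping when $v$ is adjacent to a vertex of $D$: then we want $c(v) \notin \{c(w) : w \in N(v) \cap V(D)\}$ as well, but since these $w$ already lie among $v$'s $\le 2$ neighbors this imposes nothing beyond the constraints already counted — I would just be careful to phrase the case analysis so that whenever we assign $v$ the color $1$ via the improper rule, the conflicting neighbor of color $1$ is internal if it must be (it need not be). Hence in every case $c'$ extends to a valid $(1,0,0)$-coloring $c$ of $G$ that super-extends $\phi$, contradicting the choice of $G$. Therefore every internal vertex of $G$ has degree at least $3$. $\hfill\square$
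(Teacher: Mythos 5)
Your proposal is correct and takes essentially the same route as the paper: delete $v$, super-extend $\phi$ to $G-v$ by minimality, and then color $v$. The final step needs none of your case analysis with the improper rule --- since $v$ has at most two neighbors, at most two colors are forbidden, so $v$ can simply be 3-colored with an unused color, which automatically satisfies the super-extension constraint as well.
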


\begin{proof}
 Suppose to the contrary that $G$ has an internal vertex $v$ of degree at most 2. We can super-extend $\phi$ to $G-v$ by the minimality of $G$, and then to $G$ by 3-coloring $v$.
\end{proof}

\begin{lemma} \label{lem_sep good cycle}
	$G$ has no separating good cycle.
\end{lemma}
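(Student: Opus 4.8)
The plan is to argue by contradiction with a split-and-glue argument along the separating cycle. Suppose $C$ is a separating good cycle, so $int(C)\neq\emptyset$ and $ext(C)\neq\emptyset$, and note that $V(D)\subseteq V(C)\cup ext(C)$ since the unbounded face of $G$ lies outside $C$. I would set $G_{\mathrm{out}}=G[V(C)\cup ext(C)]$ and $G_{\mathrm{in}}=G[V(C)\cup int(C)]$, each taken with the embedding inherited from $G$. Both lie in $\mathcal{G}$: they are subgraphs of $G$, hence have no $4$- or $6$-cycles, and they are connected because no edge of the plane graph $G$ joins $int(C)$ to $ext(C)$, so any path of $G$ meeting both sides of $C$ passes through $V(C)$ and a maximal excursion into the deleted side can be rerouted along $C$. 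Since $int(C)\neq\emptyset$ and $ext(C)\neq\emptyset$, both $G_{\mathrm{out}}$ and $G_{\mathrm{in}}$ are strictly smaller than $G$ in $|V|+|E|$. Deleting $int(C)$ leaves the unbounded face untouched, so the outer boundary of $G_{\mathrm{out}}$ is still the good cycle $D$; deleting $ext(C)$ makes $C$ the outer boundary of $G_{\mathrm{in}}$, again a good cycle.

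Next I would apply Theorem \ref{thm_main_extension} to the two pieces in order. As $G_{\mathrm{out}}[V(D)]=G[V(D)]$ and $D$ is good, minimality of $G$ lets $\phi$ super-extend to a $(1,0,0)$-coloring $\phi_{\mathrm{out}}$ of $G_{\mathrm{out}}$. Since $G_{\mathrm{in}}[V(C)]=G[V(C)]=G_{\mathrm{out}}[V(C)]$, the restriction $\phi_{\mathrm{out}}|_{V(C)}$ is a $(1,0,0)$-coloring of $G_{\mathrm{in}}[V(C)]$, and as $C$ is good and $G_{\mathrm{in}}$ is smaller than $G$ it super-extends to a $(1,0,0)$-coloring $\phi_{\mathrm{in}}$ of $G_{\mathrm{in}}$. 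Then I would define $c$ on $V(G)$ by $c=\phi_{\mathrm{out}}$ on $V(C)\cup ext(C)$ and $c=\phi_{\mathrm{in}}$ on $int(C)$; these agree on $V(C)$ because $\phi_{\mathrm{in}}$ extends $\phi_{\mathrm{out}}|_{V(C)}$, so $c$ is well defined.

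The remaining work — and the only place care is really needed — is to check that $c$ is a $(1,0,0)$-coloring of $G$ super-extending $\phi$. For a vertex whose neighbors all lie on one side of $C$ the constraints are inherited verbatim from $\phi_{\mathrm{out}}$ or $\phi_{\mathrm{in}}$. The delicate case is $v\in V(C)$: its neighbors in $V(C)\cup ext(C)$ behave as under $\phi_{\mathrm{out}}$, while each neighbor $w\in int(C)$ satisfies $c(w)=\phi_{\mathrm{in}}(w)\neq\phi_{\mathrm{in}}(v)=c(v)$ by the super-extension clause. Hence if $c(v)\in\{2,3\}$ no neighbor of $v$ repeats its color, and if $c(v)=1$ then the at most one color-$1$ neighbor already allowed by $\phi_{\mathrm{out}}$ is not augmented from $int(C)$ (those neighbors avoid color $1$ entirely), so the color-$1$ degree bound survives the gluing. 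Finally $c|_{V(D)}=\phi$, and a neighbor $w$ of some $v\in V(D)\subseteq V(C)\cup ext(C)$ with $w\notin V(D)$ gets $c(w)\neq c(v)$ either from $\phi_{\mathrm{out}}$ (if $w\in V(C)\cup ext(C)$) or, if $w\in int(C)$ (whence $v\in V(C)$), from the super-extension of $\phi_{\mathrm{in}}$. Thus $c$ super-extends $\phi$ to $G$, contradicting the minimality of $G$. I expect the one genuine obstacle to be precisely this gluing step at $C$, and in particular guaranteeing that merging the two colorings does not create a second color-$1$ neighbor for a color-$1$ vertex on $C$; the ``$c(v)\neq c(w)$'' requirement built into the definition of super-extension is exactly what makes it go through.
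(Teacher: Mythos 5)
Your proposal is correct and follows essentially the same route as the paper: super-extend $\phi$ to $G-int(C)$ by minimality, then super-extend the resulting coloring of $C$ to its interior, again by minimality since $C$ is good. The gluing verification you spell out (in particular that the super-extension clause prevents a vertex of $C$ colored $1$ from acquiring a second color-$1$ neighbor from $int(C)$) is exactly the point the paper leaves implicit.
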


\begin{proof}
	Suppose to the contrary that $G$ has a separating good cycle $C$. We super-extend $\phi$ to $G-int(C)$.
	Furthermore, since $C$ is a good cycle, the restriction of $\phi$ on $C$ can be super-extended to its interior, yielding a super-extension of $\phi$ to $G$.
\end{proof}

\begin{lemma}
$G$ is 2-connected. Particularly, the boundary of each face of $G$ is a cycle.
\end{lemma}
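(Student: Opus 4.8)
The plan is to prove 2-connectedness by ruling out cut-vertices, exploiting minimality of $G$ together with the two structural lemmas already established. Suppose $G$ has a cut-vertex $v$. Since $G$ is connected and $D$, the boundary of the unbounded face, is a cycle, $v$ lies on $D$ (any internal cut-vertex would disconnect $G$ while $D$ stays in one piece, but one checks that $D$ being a cycle forces the obstruction). More carefully, I would split $G$ at $v$ into blocks $G_1,\dots,G_k$ ($k\ge 2$) sharing only $v$. The key observation is that the good cycle $D$ is entirely contained in one block, say $G_1$, because $D$ is a cycle and a cycle cannot be split by a single vertex; hence for every other block $G_j$ ($j\ge 2$), the vertex $v$ is the only vertex of $G_j$ lying on $D$.

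Next I would apply minimality to each piece. For the block $G_1$ (which contains all of $D$), its outer boundary is still the good cycle $D$, and $G_1$ has fewer edges than $G$ (since $k\ge 2$ means some edge is missing), so $\phi$ super-extends to $G_1$. For each block $G_j$ with $j\ge 2$: regard $G_j$ with its own outer face; I want to invoke minimality on $G_j$ with a suitable coloring of its outer boundary. Here I would argue that the outer boundary $D_j$ of $G_j$ is a good cycle of a graph in $\mathcal{G}$ — it has no 4- or 6-cycles since $G_j\subseteq G$, and it contains no claws/edge-claws/path-claws/pentagon-claws because such a configuration inside $D_j$ would live inside $G$ too. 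Then, having already colored $v$ by the super-extension to $G_1$, I would choose any $(1,0,0)$-coloring of $G_j[V(D_j)]$ that agrees with the color of $v$ (at the single shared vertex $v$ we only need to match one value, which is trivially possible by extending greedily around $D_j$, or by first super-extending $\phi$ restricted appropriately), and then super-extend it to $G_j$ by minimality. Pasting the colorings of $G_1,\dots,G_k$ along the common vertex $v$ yields a $(1,0,0)$-coloring of $G$. One must check the super-extension condition at $v$: an edge from a $D$-vertex to a non-$D$-vertex that crosses between blocks cannot exist (blocks share only $v$), and within each block the super-extension property was guaranteed by minimality, so the global coloring is a valid super-extension of $\phi$ to $G$, contradicting the choice of $G$.

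The subtlety — and the main obstacle — is the coordination at the shared vertex $v$ for blocks $j\ge 2$: I need the super-extension condition of Theorem~\ref{thm_main_extension} to hold there, which requires that the color of $v$ (already fixed by $G_1$) not conflict with the colors forced inside $G_j$; since $v$ is the unique $D_j$-vertex that is also a $D$-vertex, and the theorem's super-extension only constrains edges between the prescribed boundary and its complement, I must ensure I can find a full $(1,0,0)$-coloring of $G_j[V(D_j)]$ extending the single value $\phi(v)$ — but because $D_j$ has no chord (by the chordlessness argument, or directly because any chord would create a short cycle or be handled by minimality) and is just a cycle, coloring the rest of $D_j$ is elementary. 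A second technical point is verifying $D_j$ is good: I would note that any forbidden sub-configuration of $D_j$ in $G_j$ is also one in $G$ relative to $D$ — wait, $D_j\ne D$ in general — so instead I argue directly that the claw/edge-claw/path-claw/pentagon-claw structures are forbidden because they would be separating good cycles or sub-configurations contradicting Lemma~\ref{lem_sep good cycle}, or simply small enough that minimality applies. Once 2-connectedness is established, the ``particularly'' clause is immediate: in a 2-connected plane graph every face is bounded by a cycle.
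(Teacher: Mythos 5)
Your approach has a genuine gap at its central step: you cannot invoke the minimality of $G$ (i.e.\ Theorem~\ref{thm_main_extension}) on a pendant block $G_j$ with its own outer boundary $D_j$ as the precolored cycle. The hypothesis of the theorem is that the outer boundary is a \emph{good} cycle, and goodness is only defined for cycles of length at most $11$; the outer boundary of a block of $G$ can be arbitrarily long, and nothing in the setup lets you precolor a long boundary cycle and extend inward. (You also silently assume each block other than the one containing $D$ is bounded by a cycle at all -- a block can be a single edge.) The paper's proof avoids this entirely: it takes a pendant block $B$ of minimum order attached at a cut vertex $v$, super-extends $\phi$ to $G-(B-v)$ by minimality, and then produces a \emph{proper} $3$-coloring of $B$ by a different device -- Gr\"otzsch's theorem if $B$ is triangle-free, and otherwise by choosing a triangle $T$ of $B$ (which is a face by Lemma~\ref{lem_sep good cycle}, since a triangle is a good cycle), re-embedding so that $T$ is the outer face, giving $T$ three distinct colors, and applying minimality to $B$; a triangle is always a good cycle of length at most $11$, so this invocation is legitimate.

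A second, related gap is the coordination at $v$. The paper permutes the color classes of a \emph{proper} $3$-coloring of $B$ to match the color of $v$; properness guarantees that no neighbor of $v$ inside $B$ shares $v$'s color, so the $(1,0,0)$ condition at $v$ is preserved (in particular $v$ does not acquire a second color-$1$ neighbor) and the super-extension condition holds even when $v\in V(D)$. Your plan only matches the single value at $v$ with some $(1,0,0)$-coloring of $G_j[V(D_j)]$; that coloring may assign color $1$ to both $v$ and a $D_j$-neighbor of $v$, which can destroy the $(1,0,0)$ property at $v$ or violate the super-extension requirement when $v$ lies on $D$. Finally, your claim that a cut vertex of $G$ must lie on $D$ is false (a pendant block can hang off an internal vertex), though this is a minor point compared with the two issues above.
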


\begin{proof}
Otherwise, let $B$ a pendant block of $G$ of minimum order, and let $v$ be a cut vertex of $G$ associated with $B$.
By the minimality of $G$, we can super-extend $\phi$ to $G-(B-v)$. If we can 3-color $B$, then permute the color classes of $B$ so that the colors assigned to $v$ coincide, which completes a super-extension of $\phi$ to $G$. By the minimality of $B$, $B$ is 2-connected. If $B$ has no triangles, then Gr\"otsch's Theorem yields that $B$ is 3-colorable. So, let $T$ be a triangle of $B$. By Lemma \ref{lem_sep good cycle}, $T$ is a 3-face. Assign distinct colors to its three vertices, and by the minimality of $G$, we can super-extend the coloring of $T$, as an exterior face of $B$, to $B$. This gives a 3-coloring of $B$.
\end{proof}

By the definition of a bad cycle, one can easily conclude the following lemma.
\begin{lemma} \label{lem_bad-cycle}
If $C$ is a bad cycle of a plane graph of $\mathcal{G}$, then $C$ has a bad partition isomorphic to one of the eight graphs shown in Figure \ref{fig_claw}. In particular, $C$ has length 9 or 10 or 11. If $|C|=9$ then $C$ has a (5,5,5)-claw; if $|C|=10$ then $C$ has a (3,7,3,7)- or (5,5,5,5)-edge-claw, or a (5,5,5,5,5)-pentagon-claw; if $|C|=11$ then $C$ has a (3,7,7)- or (5,5,7)-claw, or a (3,7,3,8)-edge-claw, or a (5,5,5,5,5)-path-claw.
\end{lemma}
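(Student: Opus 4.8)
The plan is to analyze a bad cycle $C$ of a plane graph $G\in\mathcal{G}$ by exploiting the single fact that $G$ has no $4$-cycles and no $6$-cycles, together with the defining property that one of the four "claw-type" configurations sits inside $C$. By definition of a bad cycle, $C$ carries a claw, an edge-claw, a path-claw, or a pentagon-claw $T$, each with $|C|\le 11$; I would treat the four cases in turn. In each case the subgraph $H=C\cup T$ is planar and its bounded faces are the cells; the boundary of each cell is a cycle (ends of $T$ on $C$ split $C$ into arcs, and each cell is a cycle formed by one such arc together with edges of $T$). The engine of the whole argument is: \emph{every cell of $H$ is a cycle of $G$, hence its length is neither $4$ nor $6$; moreover a $3$-cycle cell would be a triangle with a chord-free situation}, so small cells must have length $3$, $5$, $7$, $8$, $\dots$, never $4$ or $6$.

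First I would set up the counting identity. For a claw, $H_0$ is a single vertex $z$ joined to three vertices on $C$, splitting $C$ into three arcs, giving three cells whose lengths sum (counting the two $T$-edges of each cell) to $|C|+6$. For an edge-claw, $H_0$ is an edge, yielding four cells with lengths summing to $|C|+8$ (each of the two endpoints of $H_0$ contributes $2$ extra edges, plus the single $H_0$-edge counted twice). Similarly a path-claw ($H_0$ a $P_3$) gives five cells summing to $|C|+12$, and a pentagon-claw gives five cells summing to $|C|+15$. Combined with the constraint that every cell length lies in $\{3,5,7,8,9,10,11,\dots\}$ and $|C|\le 11$, this is a finite Diophantine problem: I would enumerate the admissible multisets of cell lengths, and for each surviving solution read off the forced value of $|C|$ and the degrees of the vertices of $H_0$ (a vertex of $H_0$ on two cells of lengths $a$ and $b$ sitting between arcs of lengths $a-2$ and $b-2$ of $C$ forces the degree of that $C$-vertex, via the arc it "cuts off", to be that arc-length plus the relevant $T$-edges, i.e.\ essentially $a$ or $b$).

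The key observation that prunes the list: a cell of length $3$ is a $3$-face $[pqx]$ where $x\in V(H_0)$ has degree $3$ in $H$; since $G$ has no $4$-cycle, the two neighbours $p,q$ of $x$ on $C$ are nonadjacent in $G$ unless $pq$ is the $C$-edge of that cell, and one checks that the arc structure then pins down which $C$-vertices must be $3$-vertices and which $7^+$- or $8^+$-vertices. Running the enumeration for $|C|\in\{5,\dots,11\}$, all cases with $|C|\le 8$ are killed because the cell-length sum is too small to avoid a forbidden length $4$ or $6$; for $|C|=9$ the only admissible claw-partition has cell lengths $(5,5,5)$, forcing the $(5,5,5)$-claw; for $|C|=10$ one gets edge-claw partitions with cells $(3,7,3,7)$ or $(5,5,5,5)$ and the pentagon-claw partition $(5,5,5,5,5)$; for $|C|=11$ one gets claws with cells $(3,7,7)$ or $(5,5,7)$, the edge-claw $(3,7,3,8)$, and the path-claw $(5,5,5,5,5)$. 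Translating each cell-length pattern into the induced degrees of the vertices of $C$ adjacent to $H_0$ yields exactly the labelled configurations named in the statement, and these are precisely the eight graphs of Figure~\ref{fig_claw}.

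The main obstacle I anticipate is not conceptual but bookkeeping: making the case analysis genuinely exhaustive. One must be careful that the arcs of $C$ cut out by $T$ can legitimately have length $0$ (two ends of $T$ coinciding or adjacent on $C$) and length $1$, and that such degenerate arcs can still satisfy the no-$4$-cycle/no-$6$-cycle constraints only in limited ways; a degenerate arc of length $1$ next to an $H_0$-vertex of degree $3$ creates a triangle, which is allowed, but a length-$2$ arc would create a $4$-cycle and is therefore forbidden — this is what rules out many would-be partitions. I would also need to double-check that no \emph{other} bad partition (e.g.\ a claw with a longer $H_0$-to-$C$ attachment) can occur: the definition restricts $H_0$ to a vertex, an edge, a $P_3$, or a pentagon, so the four cases above are exhaustive by hypothesis, and within each case the finite enumeration closes the argument.
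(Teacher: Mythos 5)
Your overall strategy --- treat the four types of $T$ separately, observe that every cell and hence every cell length is constrained by the absence of $4$- and $6$-cycles, and close the argument by a finite enumeration of cell-length multisets --- is exactly the argument the paper has in mind (the paper offers no written proof, asserting the lemma follows ``by the definition of a bad cycle''). However, two concrete steps in your write-up are wrong as stated. First, the counting identities for the edge-claw and the path-claw are incorrect: every edge of $H$ not on $C$ lies on exactly two cells, so an edge-claw has $5$ such edges and its four cells sum to $|C|+10$ (not $|C|+8$), and a path-claw has $7$ such edges and its five cells sum to $|C|+14$ (not $|C|+12$). With your constants the very configurations you claim to recover do not satisfy your own equation (e.g.\ $3+7+3+7=20\neq 10+8$), so the enumeration as described would not produce the lemma's list.

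Second, and more importantly, your pruning criterion is too weak. Requiring only that each individual cell length avoid $4$ and $6$ leaves many spurious solutions that are \emph{not} killed by the sum being ``too small'': for instance a claw on a $7$-cycle with cells $(3,5,5)$ or on a $9$-cycle with cells $(3,5,7)$ satisfies all your stated constraints. The missing idea is that whenever two cells share an edge of $T$, their symmetric difference is again a cycle of $G$, of length $a+b-2$, which must therefore also avoid $4$ and $6$ (and similarly for unions of more cells, whose boundary recovers arcs of $C$ or longer cycles). It is this adjacent-cell constraint --- a $3$-cell next to a $3$-cell forces a $4$-cycle, a $3$-cell next to a $5$-cell forces a $6$-cycle, etc., combined with the adjacency pattern of cells in each of the four configurations --- that eliminates all $|C|\le 8$ and whittles the solution set down to exactly the eight partitions of the lemma. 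You gesture at this for $3$-cells but never state it as the general mechanism, and you attribute the elimination of small $|C|$ to the wrong cause; without it the enumeration does not close.
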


\begin{figure}[h]
	\centering
	\includegraphics[width=13cm]{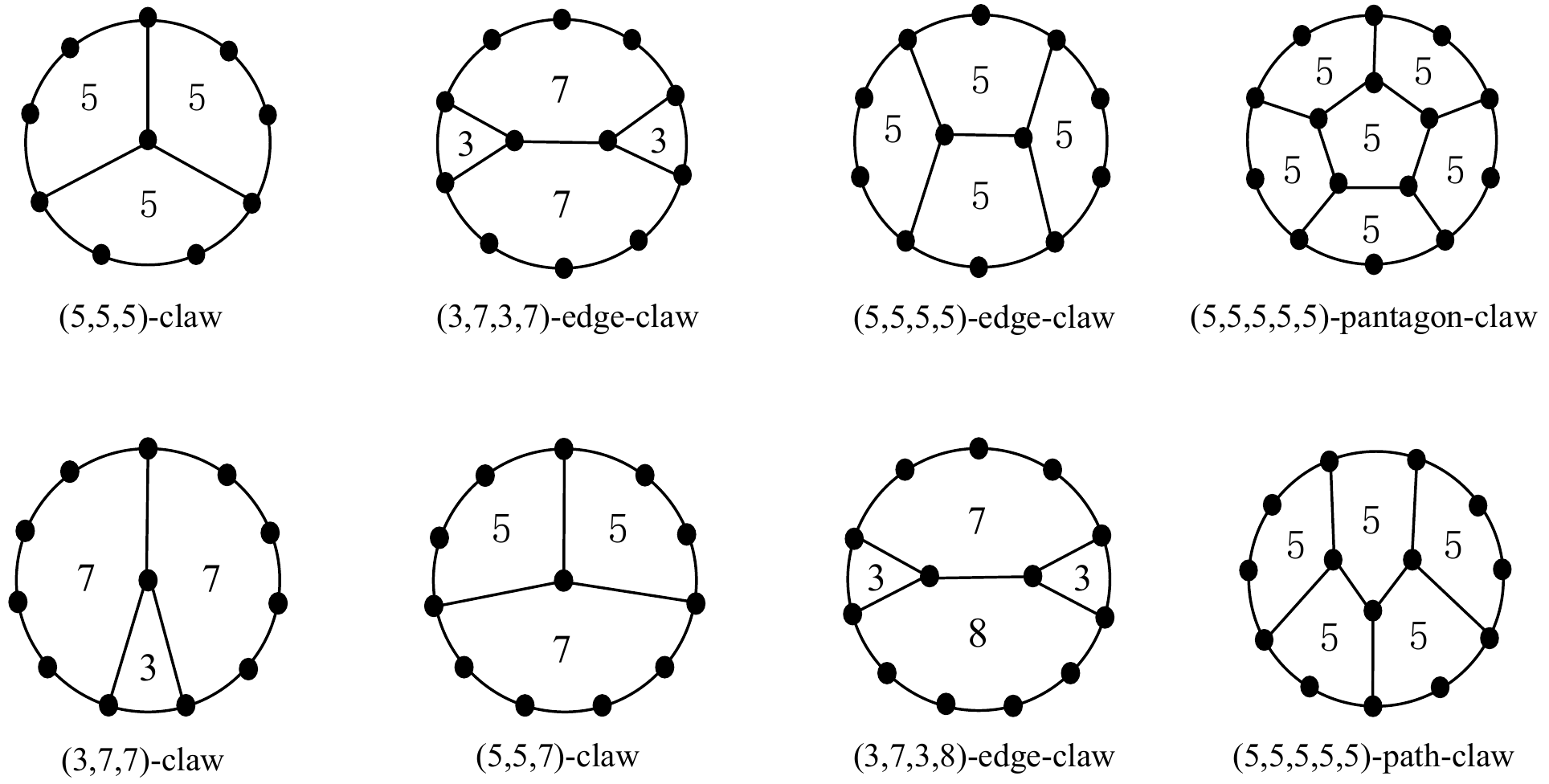}\\
	\caption{bad partitions of a cycle in a plane graph from $\mathcal{G}$, where the numbers indicate the length of each cell. A further name for the claw, edge-claw, path-claw or pantagon claw, which corresponds to each bad partition, is given below each drawing.}\label{fig_claw}
\end{figure}

From Lemmas \ref{lem_sep good cycle} and \ref{lem_bad-cycle}, one can deduce the following remark.  
\begin{remark} Let $C$ be a bad cycle of $G$. The following statements hold true.
\begin{enumerate} [(1)] \label{rem_bad_cycle}
	\setlength{\itemsep}{0pt}
	\item Every cell of $C$ is facial except that an 8-cell may have a (3,7)-chord connecting two vertices of $C$.
	\item Every vertex inside $C$ has degree 3 in $G$. \label{term_degree}
	\item Every vertex on $C$ has at most one neighbor inside $C$. \label{term_neighbor}
	\item Every vertex on $C$ is incident with at most two edges that locate inside $C$, where the exact case happens if and only if $C$ has a (3,7,3,8)-edge-claw. \label{term_incident-edges}
	\item For any set $S$ of four consecutive vertices on $C$, $G$ has at most two edges connecting a vertex from $S$ to a vertex inside $C$. \label{term_consecutive-four-vertices}
\end{enumerate}
\end{remark}

\begin{lemma}\label{lem_degree3}
	$G$ has no light vertex with neighbors all light. 
\end{lemma}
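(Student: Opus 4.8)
The plan is to argue by contradiction using the minimality of $G$. Suppose $v$ is a light vertex (internal, of degree $3$) all of whose neighbors are light as well. Let $N(v)=\{v_1,v_2,v_3\}$, so each $v_i$ is internal of degree $3$. I would first delete $v$ and invoke the minimality of $G$ to super-extend $\phi$ to $G-v$; then it remains to recolor in a small neighborhood of $v$ so that $v$ itself can be $(1,0,0)$-colored. The obstruction is that all three of $v$'s neighbors might carry distinct colors, or carry the color $1$ too many times, leaving no legal choice for $v$. The key observation is that since each $v_i$ is an internal $3$-vertex, it has only one neighbor besides $v$ and one further neighbor, i.e. very few constraints, so we have a lot of freedom to recolor the $v_i$'s.

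The main steps I would carry out, in order. First, set up: write the other two neighbors of each $v_i$ as $x_i$ and $y_i$, and note that after super-extending $\phi$ to $G-v$, each $v_i$ together with $v$ and the rest of $G$ must already be consistently colored except for the missing vertex $v$. Second, case analysis on the colors of $v_1,v_2,v_3$ under the given coloring of $G-v$. If at most two colors appear on $\{v_1,v_2,v_3\}$, or if one color (say $2$ or $3$) is missing, we can $3$-color $v$ directly; if color $1$ appears exactly once among them we can also $(1,0,0)$-color $v$ with color $1$. So the only bad case is when $v_1,v_2,v_3$ receive the three distinct colors $1,2,3$, say $\phi(v_i)=i$. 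Third, in that bad case I would recolor one of the neighbors to free up a color for $v$: since $v_2$ has degree $3$ and its only neighbors apart from $v$ are $x_2,y_2$, if some color in $\{1,2,3\}$ is available at $v_2$ in the sense of a $(1,0,0)$-recoloring — in particular if $v_2$ can be given color $1$ (which needs at most one color-$1$ neighbor among $x_2,y_2$) or can be $3$-colored with $3$ — then after recoloring $v_2$ the vertex $v$ sees colors from only $\{1,3\}$ or similar and can be colored. The point is to show that among $v_1,v_2,v_3$ at least one admits such a recoloring; if $v_2$ (the color-$2$ vertex) and $v_3$ (the color-$3$ vertex) both resist, then both $x_2,y_2$ and $x_3,y_3$ must be "blocking", which forces many edges and colored vertices into a tiny region, and I would derive a contradiction with the $(1,0,0)$-coloring constraint on $v_1$ (the color-$1$ vertex) or with the absence of $4$- and $6$-cycles near $v$.

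The hard part will be handling the subcase where recoloring any single $v_i$ fails and one must recolor two of them simultaneously (a short alternating/Kempe-type argument), while making sure no recoloring creates a vertex with two color-$1$ neighbors. I expect the cleanest route is: observe that $v$ and its three light neighbors, each of degree exactly $3$, span a very sparse configuration, so the only colors appearing in the closed neighborhood of this configuration come from at most $v_1,v_2,v_3$ and their outer neighbors $x_i,y_i$; count that at most $6$ outer vertices are involved, and show by a direct exhaustion that one can always rearrange the colors on $\{v_1,v_2,v_3\}$ so that some color in $\{1,2,3\}$ becomes usable at $v$ (with color $1$ used at $v$ only when exactly one neighbor of $v$ ends up colored $1$). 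Once $v$ is colored, we have super-extended $\phi$ to $G$, contradicting the choice of $G$.
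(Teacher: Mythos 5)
Your overall strategy (delete, super-extend by minimality, then color locally) is the right one, but you delete only $v$, whereas the paper deletes $v$ \emph{together with} its three neighbors. That single change makes the paper's proof one line: in $G'=G-\{v,v_1,v_2,v_3\}$ each $v_i$ has only its two remaining neighbors colored, so each $v_i$ can be greedily $3$-colored (properly), and then $v$ can be $(1,0,0)$-colored --- if all three colors appear on $v_1,v_2,v_3$, the unique neighbor of color $1$ was properly colored and so has no color-$1$ neighbor yet, making it safe to give $v$ the color $1$. By keeping the $v_i$ colored you inherit their colors from the super-extension of $G-v$ and are forced into the recoloring analysis you sketch.

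Within your route there is one genuine error and one unfinished (and overcomplicated) step. The error: you assert that if color $1$ appears exactly once among $v_1,v_2,v_3$, say on $v_1$, then $v$ can be given color $1$. This fails when $v_1$ already has a color-$1$ neighbor among its other two neighbors $x_1,y_1$: assigning $1$ to $v$ would give $v_1$ two neighbors of color $1$, destroying the $(1,0,0)$-property at $v_1$ (the paper's local notion of ``$(1,0,0)$-coloring $v$'' only inspects $v$'s neighborhood, but the global coloring must stay valid at $v_1$ too). The unfinished step: your ``hard case'' does not actually require Kempe chains or the absence of $4$- and $6$-cycles. The only genuinely bad configuration is the one just described ($c(v_1)=1$, some $x_1\in\{x_1,y_1\}$ with $c(x_1)=1$, and colors $2,3$ on $v_2,v_3$), and it is resolved by recoloring $v_1$ alone: since $d(v_1)=3$, its colored neighbors are just $x_1,y_1$, which carry at most two colors including $1$, so some $\alpha\in\{2,3\}$ is absent from $\{c(x_1),c(y_1)\}$; properly recolor $v_1$ with $\alpha$ (this only helps $x_1$), after which no neighbor of $v$ has color $1$ and $v$ can be $3$-colored with $1$. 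With that correction and completion your argument goes through, but the paper's deletion of all four vertices avoids the whole case analysis.
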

\begin{proof}
	Otherwise, let $v$ be such a light vertex. 
	Remove $v$ and its three neighbors, obtaining a smaller graph $G'$.
	By the minimality of $G$, $\phi$ can be super-extended to $G'$. 
	We further extend $\phi$ to being a (1,0,0)-coloring of $G$ in such way: 3-color all the neighbors of $v$ and consequently, $v$ can be (1,0,0)-colored.
\end{proof}

\begin{lemma}\label{lem_light_triangle}
	Every $(3,3,4)$-face of $G$ has no light outer neighbors.
\end{lemma}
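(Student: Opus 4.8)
The plan is a reducibility argument via the minimality of $G$. Suppose for contradiction that $f=[uxy]$ is a $(3,3,4)$-face with $d(u)=d(x)=3$ and $d(y)=4$ (all three internal) and that the outer neighbour $w$ of $u$ --- the third neighbour of $u$ besides $x,y$ --- is light; note that $x$ is then light as well. Write $x'$ for the outer neighbour of $x$, write $p,q$ for the two neighbours of $w$ other than $u$, and $y_1,y_2$ for the two neighbours of $y$ other than $u,x$. Using that $G$ has no $4$-cycle one first checks that $w$ is non-adjacent to each of $x,y,x',y_1,y_2$ (each such edge completes a $4$-cycle through $u$), so the vertices $u,x,y,w,x',y_1,y_2,p,q$ are pairwise distinct; moreover $w$ is a light vertex with the light neighbour $u$, so Lemma \ref{lem_degree3} gives that at least one of $p,q$ is heavy.

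Next I would delete $u$. Since $u$ is internal and $D$ is unchanged, $G-u\in\mathcal{G}$ has the same good boundary $D$, so by minimality $\phi$ super-extends to $G-u$, giving a $(1,0,0)$-coloring $c'$. As all three neighbours $x,y,w$ of $u$ are internal, no boundary constraint touches $u$, and one can $(1,0,0)$-colour $u$ consistently with $c'$ --- completing a super-extension of $\phi$ to $G$ and contradicting the choice of $G$ --- unless $\{c'(x),c'(y),c'(w)\}=\{1,2,3\}$ and the unique neighbour $z\in\{x,y,w\}$ with $c'(z)=1$ already has a neighbour coloured $1$ in $G-u$ (so that colouring $u$ with $1$ would give $z$ two colour-$1$ neighbours). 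The whole proof therefore reduces to excluding this bad case.

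To do that I would apply a short local recolouring of $c'$ that keeps it a super-extension of $\phi$ to $G-u$ but makes colour $1$ usable at $u$ --- either by clearing all colour-$1$ neighbours of $u$, so colour $1$ becomes an ordinary free colour there, or by destroying the overload at $z$. If $z=x$, then $c'(y)\in\{2,3\}$ and the overload forces $c'(x')=1$; recolouring $x$ with the colour of $\{2,3\}\setminus\{c'(y)\}$ is legal, since the only neighbours of $x$ are $u$ (still uncoloured), $y$, and $x'$, and it leaves $u$ with no colour-$1$ neighbour. If $z=w$, then the only neighbours of $w$ in $G-u$ are $p,q$, at most one coloured $1$, so one recolours $w$ with a colour of $\{2,3\}$ missed by $\{c'(p),c'(q)\}$; here it is crucial that $w$ is \emph{internal} (part of being light), for otherwise $c'(w)=\phi(w)$ is frozen. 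If $z=y$, the degree-$4$ vertex, the case is tighter: $y$ has a colour-$1$ neighbour among $y_1,y_2$, and one wants to recolour $y$ off colour $1$, possibly after first recolouring $x$ to align $c'(x)$ with $c'(y_2)$ --- which works unless this chain is obstructed by the colours at $x'$ and $y_2$.

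The hard part will be exactly this last sub-case: $z=y$ with both the direct recolouring of $y$ and the auxiliary recolouring of $x$ blocked (for instance $c'(y)=c'(y_1)=1$, $c'(x)\neq c'(y_2)$, and $c'(x')=c'(y_2)$). To settle it I would instead delete $u$ and $w$ together, super-extend $\phi$ to $G-\{u,w\}$, and colour $w$ and $u$ in a well-chosen order; the extra freedom at $w$, the fact that $p,q$ are not both light (Lemma \ref{lem_degree3}), and the $4$- and $6$-cycle-free conditions --- which forbid short paths among $u,x,y,w,x',y_1,y_2,p,q$ and so bound how the colours forced by $\phi$ near $D$ can propagate inward --- should leave, in every remaining configuration, a recolouring of one of $w,y_1,y_2,x'$ (or of the colour-$1$ vertex among $p,q$) that frees a colour at $u$. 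Proving that no configuration escapes is where the bulk of the casework lies; everything preceding it is routine.
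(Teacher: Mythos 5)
Your reduction deletes too little, and the residual recolouring argument does not close. The paper's proof deletes all four vertices $u,x,y,w$ (both $3$-vertices of the face, the $4$-vertex, and the light outer neighbour), super-extends $\phi$ to the smaller graph, and then colours them back in the order $y,x,w,u$: the $4$-vertex $y$ first (it then has only two coloured neighbours, so it can be properly $3$-coloured), then $x$ and $w$ (each a $3$-vertex whose remaining neighbour $u$ is still uncoloured), and finally $u$. Because $y$, $x$ and $w$ are each \emph{properly} coloured at the moment they receive their colour, none of them ends up with a neighbour of its own colour; hence if $u$'s three neighbours exhaust $\{1,2,3\}$, the unique colour-$1$ neighbour of $u$ has no other colour-$1$ neighbour, and $u$ may safely receive colour $1$. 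No case analysis is needed.

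Your plan deletes only $u$ (or $u$ and $w$) and tries to repair the inherited colouring $c'$ by local recolouring. The cases $z=x$ and $z=w$ do go through as you describe, but the case $z=y$ does not: take $c'(x)=2$, $c'(w)=3$, $c'(y)=c'(y_1)=1$, $c'(y_2)=c'(x')=3$. Then $y$ sees all three colours and cannot be properly recoloured; $x$ cannot be moved to $3$ (blocked by $x'$) nor to $1$ (that would give $y$ two colour-$1$ neighbours); and in the variant where $w$ is also deleted, $\{c'(p),c'(q)\}=\{1,2\}$ forces $w$ to colour $3$, reproducing the same deadlock. At that point you would have to recolour $y_1$, $y_2$, $x'$, $p$ or $q$, whose degrees and neighbourhoods are essentially unconstrained, and you give no argument that this terminates --- indeed you concede that ``the bulk of the casework'' remains. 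As written the proof is therefore incomplete; the missing idea is precisely to put $x$ and $y$ into the deleted set as well, so that they can be recoloured properly from scratch with the $4$-vertex coloured first, after which the $(1,0,0)$-colouring of $u$ is automatic.
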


\begin{proof}
	Suppose to the contrary that $f=[uvw]$ is a $(3,3,4)$-face of $G$
	having a light outer neighbor $x$. W.l.o.g.,
	Let $u$ be adjacent to $x$ and let $d(w)=4$.
    Remove $u,v,w$ and $x$ from $G$, obtaining a smaller graph $G'$. 
    By the minimality of $G$, $\phi$ can be
	super-extended to $G'$ and further to $G$ in such way: 3-color $w,v$ and $x$ in turn, and then (1,0,0)-color $u$.
\end{proof}

\begin{lemma}\label{lem_splitting path}
Let $P$ be a splitting path of $D$ which divides $D$ into two cycles, say $D'$ and $D''$.
The following four statements hold true.
\begin{enumerate}[(1)]
	\setlength{\itemsep}{0pt}
  \item If $|P|=2$, then there is a triangle between $D'$ and $D''$.
  \item If $|P|=3$, then there is a 5-cycle between $D'$ and $D''$.
  \item If $|P|=4$, then there is a 5- or 7-cycle between $D'$ and $D''$.
  \item If $|P|=5$, then there is a 7- or 8- or 9-cycle between $D'$ and $D''$.
\end{enumerate}
\end{lemma}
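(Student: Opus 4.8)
\section*{Proof proposal for Lemma \ref{lem_splitting path}}

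The plan is to analyze the two cycles $D',D''$ produced by $P$ directly, using the no‑chord property of $D$, the absence of $4$‑ and $6$‑cycles, and the two structural tools already at hand: Lemma \ref{lem_sep good cycle} (no separating good cycle) and Lemma \ref{lem_bad-cycle} together with Remark \ref{rem_bad_cycle} (classification of bad cycles into a short list of bad partitions with short facial cells). Write $P=v_0v_1\cdots v_k$ with $v_0,v_k\in V(D)$ and $v_1,\dots,v_{k-1}$ inside $D$, let $Q',Q''$ be the two $v_0$–$v_k$ arcs of $D$ (so $|Q'|,|Q''|\ge 1$ since $D$ has no chord, with $|Q'|=1$ meaning $v_0v_k\in E(D)$), set $D'=P\cup Q'$, $D''=P\cup Q''$, and let $R',R''$ be the closed disks they bound. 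Then $|D'|=|P|+|Q'|$, $|D''|=|P|+|Q''|$, and since $D',D''$ are cycles of $G\in\mathcal G$ we have $|D'|,|D''|\notin\{4,6\}$. As $D$ is good, $|D|\le 11$, so we may assume $|Q'|\le\lfloor|D|/2\rfloor\le 5$; hence $|D'|=|P|+|Q'|\le|P|+5\le 10\le 11$, so $D'$ is either good or bad. Three facts will be used repeatedly: (i) by Lemma \ref{lem_min degree} every $v_i$ with $1\le i\le k-1$ has degree $\ge 3$, so $R'$ and $R''$ cannot both be single faces (otherwise $v_1$ would have only the edges $v_0v_1,v_1v_2$); (ii) since $ext(D')\supseteq ext(D)\neq\emptyset$, if $D'$ is good then by Lemma \ref{lem_sep good cycle} $int(D')=\emptyset$, i.e.\ $R'$ is a single face bounded by $D'$ (and likewise for $D''$); (iii) if $D'$ is bad then by Lemma \ref{lem_bad-cycle} $|D'|\in\{9,10,11\}$ and $D'$ carries one of the eight bad partitions, whose cells lie in $R'$, are facial except possibly an $8$‑cell with a $(3,7)$‑chord, and whose interior vertices have degree $3$ (Remark \ref{rem_bad_cycle}).

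The main branch is ``$D'$ good''. Then $R'$ is a single face, so $|D'|=|P|+|Q'|\notin\{4,6\}$; for each $k\in\{2,3,4,5\}$ this restricts $|Q'|$ to a short list (e.g.\ $k=2$: $|Q'|\in\{1,3,5\}$; $k=3$: $|Q'|\in\{2,5\}$; $k=4$: $|Q'|\in\{1,3,5\}$; $k=5$: $|Q'|\in\{2,3,4,5\}$). Whenever $|D'|$ already equals a target length — a triangle for $k=2,|Q'|=1$; a $5$‑cycle for $k=3,|Q'|=2$ and $k=4,|Q'|=1$; a $7$‑cycle for $k=4,|Q'|=3$; a $7$‑, $8$‑ or $9$‑cycle for $k=5,|Q'|\in\{2,3,4\}$ — the facial cycle $D'$ is itself the desired cycle. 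In the residual cases ($k=2,|Q'|\in\{3,5\}$; $k=3,|Q'|=5$; $k=5,|Q'|=5$) the complementary arc length $|Q''|=|D|-|Q'|$ is small, and we turn to $D''$: if $D''$ is also good then fact (i) gives a contradiction, so $D''$ is bad, and Lemma \ref{lem_bad-cycle} hands us its bad partition. We then read off a cell of the required length inside $R''$ — a $5$‑cell of a $(5,5,5)$‑claw settles $k=3$; a $7$‑cell of a $(3,7,3,7)$‑edge‑claw settles $k=5$; and for $k=2$ one checks that the remaining subcases cannot occur at all, because the unique vertex inside the bad cycle would have to be adjacent to $v_1$ (forcing an edge‑claw of $D$) or would have three neighbours on $D$ (forcing a claw of $D$), contradicting that $D$ is good.

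For the branch ``$D'$ bad'', $|D'|\in\{9,10,11\}$ with $|Q'|\le5$ forces $k\in\{4,5\}$ and only a few values of $|Q'|$; Lemma \ref{lem_bad-cycle} then supplies cells of length $5$, or $3$ and $7$, and either such a cell is a target length or $D'$ itself (length $9$) serves the $k=5$ target set $\{7,8,9\}$. I expect the genuine obstacle to be the final residual configurations — chiefly $k=5$ with $|D'|$ or $|D''|$ equal to $10$ carrying a $(5,5,5,5)$‑edge‑claw or a $(5,5,5,5,5)$‑pentagon‑claw, where all cells have length $5$ but a $7$‑, $8$‑ or $9$‑cycle is wanted. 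Handling these requires combining the claw's spokes with a sub‑path of $D'$ (or of $P$) to build a cycle of length $7$, $8$, or $9$ in the disk, or else showing the configuration is impossible: if every spoke landed on $Q''$ (or $Q'$) then the claw would already be a claw/edge‑claw of $D$, contradicting its goodness, and if a spoke lands on an interior vertex of $P$ the degree constraint (Lemma \ref{lem_min degree}) and the restriction that a vertex of a bad cycle has at most one neighbour inside it (Remark \ref{rem_bad_cycle}) pin down the local structure enough to force the desired cycle or a contradiction. The bookkeeping needed to run this elimination uniformly over the eight bad partitions is the part of the argument that will take real care.
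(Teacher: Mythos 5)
There is a genuine gap, and it starts with what the lemma actually asserts. The conclusion ``there is a triangle (resp.\ $5$-cycle, \dots) between $D'$ and $D''$'' means that \emph{one of the two cycles $D'$, $D''$ themselves} has the stated length; this is how the paper negates it (``suppose $|D'|,|D''|\ge 5$'', etc.) and how it is applied later (e.g.\ in Lemma \ref{lem_2pendent} the short part must be a good cycle separating two specified vertices). Your step ``we read off a cell of the required length inside $R''$ --- a $5$-cell of a $(5,5,5)$-claw settles $k=3$'' therefore settles nothing: in the configuration $|P|=3$, $|D'|=8$ facial, $|D''|=9$ bad with a $(5,5,5)$-claw, a $5$-cell exists but neither $D'$ nor $D''$ is a $5$-cycle, so the lemma would be violated. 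What is required is to show such configurations \emph{cannot occur}; the paper does this by observing that the claw/edge-claw of the bad part must attach at an internal vertex of $P$ (by Lemma \ref{lem_min degree}) and then transplanting it into a claw or edge-claw or path-claw of $D$, contradicting that $D$ is good. The same objection applies to your ``a $7$-cell of a $(3,7,3,7)$-edge-claw settles $k=5$.''

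The second gap is the one you flag yourself but cannot close with the tools you list: the residual configurations in which both $D'$ and $D''$ are long and at least one is bad with all cells pentagonal (the $(5,5,5,5)$-edge-claw, path-claw and pentagon-claw), and the $|P|=4,5$ cases where the interior vertices of $P$ must each find a third neighbour somewhere. Lemma \ref{lem_min degree} and Remark \ref{rem_bad_cycle} alone do not eliminate these: the paper additionally needs Lemma \ref{lem_degree3} (no light vertex with all neighbours light) to kill the all-pentagon partitions, Lemma \ref{lem_light_triangle} to kill a coincidence of attachment points, an edge-count of the set $I$ of edges leaving $\{w,x,y\}$ resp.\ $\{v,w,x,y\}$, and an induction in which part $(j)$ invokes parts $(i)$, $i<j$, on shorter splitting paths (this is also how chords of $D'$ are excluded, i.e.\ how $G[V(P)]=P$ is established --- a point your ``good implies a single face'' elides). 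Finally, a small but real slip: $ext(D)=\emptyset$ since $D$ bounds the unbounded face, so your justification of fact (ii) is wrong as written; one must instead note that under the contrary assumption $Q''$ has internal vertices, which lie in $ext(D')$.
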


\begin{proof}
Since $D$ has length at most 11, we have $|D'|+|D''|=|D|+2|P|\leq 11+2|P|$.

(1) ~Let $P=xyz$. Suppose to the contrary that $|D'|, |D''| \geq 5$. It follows that $|D'|, |D''| \leq 10$.
By Lemma \ref{lem_min degree}, $y$ has a neighbor other than $x$ and $z$, say $y'$. The vertex $y'$ is internal since otherwise, $D$ is a bad cycle with a claw. W.l.o.g., let $y'$ lie inside $D'$. Now $D'$ is a separating cycle. By Lemma \ref{lem_sep good cycle}, $D'$ is not good. Recall that $|D'|\leq 10$. So $D'$ is a bad 9- or 10-cycle and $D''$ is a 5-cycle.
By Lemma \ref{lem_bad-cycle}, $D'$ has a (5,5,5)-claw  or a (5,5,5,5)-edge-claw or a (3,7,3,7)-edge-claw or a (5,5,5,5,5)-pentagon-claw, which would lead to a (5,5,5,5)-edge-claw or a (5,5,5,5,5)-path-claw of $D$ for the first two cases, to a 6-cycle for the third case, and to $y'$ being a light vertex with three light neighbors for the last case, a contradiction.

(2) Let $P=wxyz$. Suppose to the contrary that $|D'|, |D''| \geq 7$. It follows that $|D'|, |D''| \leq 10$.
Let $x'$ and $y'$ be neighbors of $x$ and $y$ not on $P$, respectively.
If both $x'$ and $y'$ are external, then $D$ has an edge-claw.
Hence, we may assume that $x'$ lies inside $D'$.
By Lemmas \ref{lem_sep good cycle} and \ref{lem_bad-cycle}, we deduce that $D'$ is a bad 9- or 10-cycle. So, $D''$ is a 7- or 8-cycle, which is good. Since every cell of $D'$ is facial, $y'$ must lie on $D''$. The application of this lemma to the splitting 2-path $y'yz$ yields that $yy'$ a (3,7)-chord of $D''$. So, $D'$ is a 9-cycle, which has a (5,5,5)-claw. Now the triangle $[yy'z]$ is adjacent to some 5-cell of $D'$, a contradiction.

(3) Let $P=vwxyz$. Suppose to the contrary that $|D'|, |D''| \geq 8$.
It follows that $|D'|, |D''| \leq 11$.
If $wy\in E(G)$, then by applying this lemma to the splitting 3-path $vwyz$ of $D$,
either $D'$ or $D''$ has length 6, a contradiction. Hence, $wy\notin E(G)$. Similarly, $vx,xz\notin E(G)$.
Since $G$ has no 4-cyles and $D$ has no chord, we can further conclude that $G$ has no edges connecting two nonconsecutive vertices on $P$, i.e., $G[V(P)]$ is $P$.

By Lemma \ref{lem_min degree}, $x$ has a neighbor $x'$ besides $w$ and $y$.
We claim that $x'$ lies inside $D$. Suppose to the contrary that $x'\in V(D')$.
By applying this lemma to the splitting 3-paths $vwxx'$ and $x'xyz$, $xx'$ is a (5,5)-chord of $D'$.
Since $d(w)\geq 3$, let $w'$ be a neighbor of $w$ other than $v$ and $x$.
Clearly, $w'$ lies either on $D''$ or inside it. Recall that $w'$ is not on $P$.
If $w'$ lies on $D''\setminus V(P)$, then $vww'$ is splitting 2-path of $D$, which forms a triangle adjacent to a 5-cell of $D'$, a contradiction. 
Hence, $w'$ lies inside $D''$. 
Similarly, $y'$ lies inside $D''$ as well. 
Clearly, $w'$ and $y'$ are distinct vertices. Notice that $w$ and $y$ have distance 2 along $D''$.
So, as a bad cycle, whose possible interior is given by Lemma \ref{lem_bad-cycle}, $D''$ has a (5,5,5,5)-edge-claw or a (5,5,5,5,5)-path-claw or a (5,5,5,5,5)-pentagon-claw, which implies a pentagon-claw of $D$ for the first case, and $w'$ being a light vertex with three light neighbors for the last two cases, a contradiction.

W.l.o.g., let $x'$ lies inside $D'$. So $D'$ is a bad cycle.
By Remark \ref{rem_bad_cycle}(\ref{term_degree}), $d(x')=3$.
Denote by $I$ the set of edges connecting a vertex from $\{w,x,y\}$ to a vertex not on $P$. 
Recall that $G[V(P)]$ is $P$. So, Lemma \ref{lem_min degree} implies that $|I|\geq 3$. 
By applying Lemma \ref{lem_degree3} to $x$, we further have $|I|\geq 4$.

Suppose that $D''$ is also a bad cycle, then one of $D'$ and $D''$ has length 9 and the other has length 9 or 10, which implies that one contains at most one edge from $I$ inside and the other contains at most two edges from $I$ inside, contradicting the fact that $|I|\geq 4.$
Hence, we may assume that $D''$ is a good cycle.

We conclude that $d(x)=3$. This is because $x$ has no neighbors on $D$ by the same argument as for $x'$, no neighbors inside $D''$ since $D''$ is a good cycle, and no neighbors besides $x'$ inside $D'$ by Remark \ref{rem_bad_cycle}(\ref{term_incident-edges}).

Recall that $D''$ is a good cycle, so $w$ (as well as $y$) has no neighbors inside $D''$. Moreover, since $D$ has no claws, $w$ (as well as $y$) has at most one neighbor on $D\setminus \{v,z\}$.
It follows with $|I|\geq 4$ that, inside $D'$ there exists a vertex $t$ adjacent to $w$ or $y$.
By Remark \ref{rem_bad_cycle}(\ref{term_neighbor}) and (\ref{term_consecutive-four-vertices}), such $t$ is unique. W.o.l.g, let $tw\in E(G)$. 
This implies that $|I|=4$ and each of $w$ and $y$ have a neighbor on $D-V(P)$.
If $t=x'$, then $[wxx']$ is a pendent $(3,3,4)$-face of $y$, contradicting Lemma \ref{lem_light_triangle}.
So, $t$ and $x'$ are distinct.
Moreover, $t$ and $x'$ are not adjacent since otherwise $G$ has a 4-cycle.
Hence, we can conclude that $D'$ has a path-claw or a pentagon-claw, making all cells of length 5.
This yields that $y$ mush have no neighbors other than $z$ on $D$, a contradiction.

(4) Let $P=uvwxyz$. Suppose to the contrary that $|D'|, |D''| \geq 10$.
Since $|D'|+|D''|\leq 21$, we have $|D'|, |D''| \leq 11$.
We claim that $G$ has no edges connecting two nonconsecutive vertices on $P$, i.e., $G[V(P)]$ is $P$.
Otherwise, let $e=t_1t_2$ be such an edge. 
Let $P'$ be obtained from $P$ by constituting $e$ for the subpath of $P$ between $t_1$ and $t_2$.
Clearly, $P'$ is a splitting $4^-$-path of $D$.
Applying this lemma to $P'$ yields that either $D'$ or $D''$ has length at most 8, a contradiction.
By this claim and Lemma \ref{lem_min degree}, we may let $v',w',x'$ and $y'$ be a neighbor of $v,w,x$ and $y$ not on $P$, respectively. 

We claim that both $w$ and $x$ have no neighbors on $D$.  Otherwise, w.l.o.g., let $w'$ be on $D'$.
By applying this lemma to the splitting 3-path $uvww'$ and the splitting 4-path $w'wxyz$ of $D$, we deduce that $ww'$ is a $(5,7)$-chord of $D'$.
Hence, the interior of $D'$ contains no edges incident with $v,x$ or $y$.
If $x'$ lies on $D''$ then similarly, $xx'$ is a $(5,7)$-chord of $D''$, resulting in no positions for $u'$ and $y'$, a contradiction.
Hence, $x'$ must lie inside $D''$. So, $D''$ is a bad cycle.
Since a bad cycle has at most one chord, Remark \ref{rem_bad_cycle}(\ref{term_consecutive-four-vertices}) implies that the interior of $D''$ contains at most three edges incident with $v,w,x$ or $y$.
It follows that $d(v)=d(w)=d(x)=d(y)=3$.
By Remark \ref{rem_bad_cycle}(\ref{term_degree}), $d(x')=3$.
Now $x$ is a light vertex with three light neighbors, contradicting Lemma \ref{lem_degree3}.

Suppose that one of $D'$ and $D''$, say $D'$, is a good cycle.
In this case, both $w'$ and $x'$ lie inside $D'$.  
Remark \ref{rem_bad_cycle}(\ref{term_neighbor}) implies that such $w'$ and $x'$ are unique.
So, $d(w)=d(x)=3$. 
By Remark \ref{rem_bad_cycle}(\ref{term_consecutive-four-vertices}), both $v'$ and $y'$ are on $D$. 
Clearly, such $v'$ and $y'$ are also unique since otherwise, $D$ has a claw.
So, $d(v)=d(y)=3$.
By Remark \ref{rem_bad_cycle}(\ref{term_degree}), $d(w')=d(x')=3$.
Now $x$ is a light vertex having three light neighbors, contradicting Lemma \ref{lem_degree3}.
Therefore, both $D'$ and $D''$ are bad.

Denote by $I$ the set of edges not on $P$ and incident with a vertex from $\{v,w,x,y\}$.
Notice that a bad cycle has a chord only if it is of length 11, but not both $D'$ and $D''$ have length 11.
So, $I$ has at most one edge taking a vertex on $D$ as an end. 
Moreover, Remark \ref{rem_bad_cycle}(\ref{term_consecutive-four-vertices}) implies that $I$ has at most four edges taking a vertex inside $D'$ or $D''$ as an end.
Therefore, $|I|\leq 5.$ This leads to the only case that $d(v)=d(y)=3$ and between $w$ and $x$, one has degree 3 and the other 4 since otherwise, at least one of $w$ and $x$ would be a light vertex with three light neighbors.
W.l.o.g., let $d(x)=4$. Since Remark \ref{rem_bad_cycle}(\ref{term_neighbor}), we may assume that $w'$ and $x'$ lie inside $D'$.
Lemma \ref{lem_light_triangle} implies that $w'$ and $x'$ can not coincide. Notice that $w$ and $x$ are consecutive on $D'$. By the specific interior of a bad cycle, we can deduce that $D'$ is a 11-cycle having a (5,5,5,5,5)-path-claw. This implies that both $D'$ and $D''$ have no chords, a contradiction.
\end{proof}

Loops and multiple edges are regarded as 1-cycles and 2-cycles, respectively.
\begin{lemma}\label{lem_general-operation}
	Let $G'$ be a connected plane graph obtained from $G$ by deleting vertices, inserting edges, identifying vertices, or any combination of them. If $G'$ is smaller than $G$ and the following holds:
	\begin{enumerate}[(i)] 
		\setlength{\itemsep}{0pt}
		\item identify no pair of vertices of $D$ and insert no edges connecting two vertices of $D$, and
		\item create no $k$-cycles for any $k\in\{1,2,4,6\}$, and
		\item $D$ is good in $G'$,
	\end{enumerate}
 then $\phi$ can be super-extended to $G'$.
\end{lemma}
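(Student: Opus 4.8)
The plan is to arrange matters so that the minimality of $G$ applies to $G'$ directly. Recall that $G$ was chosen as a counterexample to Theorem \ref{thm_main_extension} of smallest value $|V|+|E|$; hence for every plane graph $H\in\mathcal{G}$ with $|V(H)|+|E(H)| < |V(G)|+|E(G)|$ whose unbounded face is bounded by a good cycle $D_H$, every $(1,0,0)$-coloring of $H[V(D_H)]$ can be super-extended to $H$. So it suffices to verify that the triple $(G',D,\phi)$ satisfies these requirements, after which the conclusion is immediate.

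First I would check $G'\in\mathcal{G}$: it is connected and plane by hypothesis; condition (ii) with $k\in\{1,2\}$ forbids creating loops or parallel edges, so $G'$ is simple; and since $G$ has no cycle of length $4$ or $6$ and condition (ii) forbids creating one, neither does $G'$. Next I would check that $D$ is still the boundary of the unbounded face of $G'$ and is good there. No vertex of $D$ can have been deleted, for otherwise $D$ would survive only as a proper path of $G'$ and condition (iii), which refers to $D$ as a cycle of $G'$, would be vacuous; thus every deleted vertex is internal, every edge of $D$ survives, and — the remaining operations (edge insertions and identifications) being carried out inside the closed disc bounded by $D$ — the exterior region of the drawing is untouched, so its boundary is still the cycle $D$, which is good by (iii). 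Finally, since $D$ is chordless in $G$ we have $G[V(D)]=D$, and condition (i) (no edge inserted between two vertices of $D$, and no two vertices of $D$ identified) guarantees that no edge joining two vertices of $D$ is created, so $G'[V(D)]=D$ as well; consequently $\phi$ is still, verbatim, a $(1,0,0)$-coloring of $G'[V(D)]$.

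With all of this in place, and using $|V(G')|+|E(G')| < |V(G)|+|E(G)|$, the minimality of $G$ applied to $G'\in\mathcal{G}$ with good outer boundary $D$ yields that $\phi$ can be super-extended to $G'$. The one point where care is genuinely required is the middle step: one must be sure that no permitted operation, alone or in combination, silently turns $D$ into a non-cycle, relocates part of the graph into the exterior face, or introduces a chord of $D$ on which $\phi$ becomes invalid — all of which conditions (i)–(iii) are tailored precisely to exclude. Once the meaning of ``obtained from $G$'' is pinned down for the plane embedding, the argument is nothing more than a bookkeeping check followed by an appeal to the minimality of $G$.
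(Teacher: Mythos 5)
Your argument is correct and follows essentially the same route as the paper's proof: verify that $G'\in\mathcal{G}$ (using (ii)), that $D$ remains the boundary of the unbounded face with $\phi$ a valid $(1,0,0)$-coloring of $G'[V(D)]$ (using (i)), that $D$ is good (condition (iii)), and then invoke the minimality of $G$. Your write-up just spells out the bookkeeping the paper compresses into three sentences.
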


\begin{proof}
	By Term $(ii)$, the graph $G'$ is simple and $G'\in \cal{G}$.
	The term $(i)$ guarantees that the new graph $G'$ has the same $D$ as the boundary of its exterior face, and that $\phi$ is a (1,0,0)-coloring of $G'[V(D)]$.
	Since $D$ is good in $G'$ and $G'$ is smaller than $G$, the lemma holds true by the minimality of $G$.
\end{proof}

\begin{lemma}\label{pro_operation}
Let $G'$ be a connected plane graph obtained from $G$ by deleting a set of internal vertices together with either identifying two vertices or inserting an edge between two vertices.
If the following holds true for this graph operation:
\begin{enumerate}[($a$)] 
\setlength{\itemsep}{0pt}
  \item identify no pair of vertices of $D$, insert no edges connecting two vertices of $D$, and
  \item create no $6^-$-cycles or triangular 7-cycles,
\end{enumerate}
then $\phi$ can be super-extended to $G'$.
\end{lemma}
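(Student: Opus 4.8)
The plan is to deduce this lemma from the more general Lemma~\ref{lem_general-operation}. Concretely, I must check that the stated operation $(i)$ identifies no pair of vertices of $D$ and inserts no edge joining two vertices of $D$, $(ii)$ creates no $k$-cycle for $k\in\{1,2,4,6\}$, $(iii)$ leaves $D$ good in $G'$, and that $G'$ is smaller than $G$. Item $(i)$ is literally hypothesis $(a)$. Item $(ii)$ is contained in hypothesis $(b)$, which is strictly stronger: creating no $6^-$-cycle in particular forbids creating $1$-, $2$-, $4$- or $6$-cycles. Thus the whole content of the proof is the size bound and, above all, item $(iii)$.

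For the size bound, recall that by Lemma~\ref{lem_min degree} every internal vertex of $G$ has degree at least $3$, so deleting an internal vertex removes it together with at least three edges; identifying two vertices removes one vertex and, by $(b)$ (which rules out the loop or multiple edge that would otherwise be created), merges no pair of edges; and inserting an edge adds one edge. Hence in the identification case $|V|$ strictly decreases while $|E|$ does not increase, and in the edge-insertion case $|E|$ decreases by at least $3-1=2$ while $|V|$ does not increase; either way $|V(G')|+|E(G')|<|V(G)|+|E(G)|$.

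Now suppose, for a contradiction, that $D$ is bad in $G'$. By Lemma~\ref{lem_bad-cycle}, $G'$ contains a bad partition $H$ of $D$ isomorphic to one of the eight graphs of Figure~\ref{fig_claw}, and every cell of $H$ is a cycle of $G'$ of length $3$, $5$, $7$ or $8$. Since $D$ is good in $G$, the subgraph $H$ is not realized inside $D$ already in $G$, so it must use the new feature created by the operation. In the edge-insertion case that feature is the inserted edge $e$; by $(a)$, $e$ joins no two vertices of $D$, so $e$ is an edge of $H$ not on $D$, and hence lies on the boundary of at most two cells of $H$. A direct inspection of the eight bad partitions shows that every edge of such a partition not lying on $D$ borders a cell that is a triangle, a pentagon, or a $7$-cell sharing an edge with a triangular cell; taking such a cell $F$ through $e$, the cycle $F$ uses $e$ and so is not a cycle of $G$, whence the operation has created a $6^-$-cycle or a triangular $7$-cycle (in the latter case the triangular cell that $F$ meets is either the other cell through $e$, a new triangle, or a subgraph of $G$, a genuine triangle of $G'$). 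This contradicts $(b)$. In the identification case, say vertices $p$ and $q$ are merged into $w$ with $p,q$ not both on $D$; then $w\in V(H)$, and the key point is that the edges of $H$ at $w$ split into those inherited from $p$ and those inherited from $q$, with both parts nonempty (otherwise $H$, with $w$ replaced by $p$, would be a bad partition of $D$ in $G$), so some cell $F$ of $H$ incident to $w$ has its two $w$-edges of opposite pedigree. Reading $F$ back in $G$, its boundary becomes a $p$--$q$ path of length $|F|$, so $F$ is a cycle of $G'$ which is not a cycle of $G$. Inspecting the eight bad partitions once more (each vertex of $H_0$ has degree $3$ and is incident to a triangular or a pentagonal cell, and every $7$-cell incident to a degree-$3$ vertex of $H$ is adjacent to a triangular cell), one finds that such a mixed cell $F$ is a $6^-$-cycle or a triangular $7$-cycle and that the triangular cell it meets is already present in $G$; once more this contradicts $(b)$. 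Therefore $D$ is good in $G'$, and Lemma~\ref{lem_general-operation} yields that $\phi$ super-extends to $G'$.

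The step I expect to be the main obstacle is the case analysis just sketched: one must verify, partition by partition in Figure~\ref{fig_claw}, that whichever edge ($e$) or vertex ($w$) the operation involves is forced onto a \emph{new} cell of length at most $6$, or onto a \emph{new} $7$-cell adjacent to a triangular cell. The easy partitions are those whose cells are all triangles or pentagons; the delicate ones are the $(3,7,7)$-claw and the $(3,7,3,7)$- and $(3,7,3,8)$-edge-claws, where the spoke of the claw farthest from the triangular cell, respectively the central edge of the edge-claw, borders only $7$- or $8$-cells, so one genuinely needs that each of those large cells shares an edge with the partition's triangular cell. A minor additional point, handled already by $(a)$ together with the standing assumption that $G'$ is a connected plane graph, is that the operation leaves $D$ bounding the unbounded face of $G'$ with $\phi$ still a $(1,0,0)$-coloring of $G'[V(D)]$, which is exactly what makes Lemma~\ref{lem_general-operation} applicable.
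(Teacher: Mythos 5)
Your proposal is correct and follows essentially the same route as the paper: reduce to Lemma~\ref{lem_general-operation}, then show $D$ stays good by locating the inserted edge or the merged vertex on a would-be bad partition $H$ and observing that this forces a created $6^-$-cell or a created triangular $7$-cell. The one point to tighten is in the identification case, where a single mixed cell at $w$ could a priori be the $8$-cell of a $(3,7,3,8)$-edge-claw (neither a $6^-$-cycle nor a triangular $7$-cycle); as in the paper's proof, one should note that there are in fact \emph{two} mixed cells at $w$ (the two $D$-edges at $w$ necessarily share a parent, so the transitions flank the third $H$-edge), and inspection shows at least one of these two is a $6^-$-cycle or a triangular $7$-cycle whose adjacent triangle survives in $G$.
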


\begin{proof}
Lemma \ref{lem_general-operation} shows that, to complete the proof, it suffices to showing that $D$ is a good cycle of $G'$.
Suppose to the contrary that $D$ has a bad partition $H$ in $G'$.
We distinguish two cases on the graph operation.

Case 1: assume that the graph operation includes identifying two vertices.
Denote by $v_1$ and $v_2$ the two vertices we identify and by $v$ the resulting vertex.
Lemma \ref{lem_bad-cycle} lists all the possible structure for $H$.
Recall that $D$ stays the same during the operation. If either $v\notin V(H)$ or $v\in V(H)$ such that $d_H(v)=2$, then $H$ stays the same during the operation, contradicting the fact that $D$ is a good cycle in $G$.
Hence, $v$ lies on $H$ and $d_H(v)=3$. If all the three neighbors of $v$ in $H$ are adjacent in $G$ to a common vertex from $\{v_1,v_2\}$, then again $H$ stays the same during the operation, a contradiction. Hence, one neighbor is adjacent to $v_1$ and the other two adjacent to $v_2$. This implies that there are two cells around $v$ that are created by our graph operation.
It follows by the possible structure of $H$ that, we create either a $6^-$-cycle or a triangular 7-cycle, contradicting the assumption (b).

Case 2: assume that the graph operation includes inserting an edge, say $e$.
Recall that $D$ stays the same during the operation.
If $e\notin E(H)\setminus E(D)$, then $H$ is a bad partition of $D$ also in $G$, a contradiction; otherwise, the two cells of $H$ containing $e$ are created by our operation, contradicting the assumption (b).
\end{proof}

\begin{lemma}\label{lem_2pendent}
	$G$ contains no internal 4-vertices having a pendent $(3,3,3)$-face and another pendent
	$(3,3,4^-)$-face.
\end{lemma}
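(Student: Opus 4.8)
The plan is to assume the configuration occurs, delete the whole configuration, super-extend $\phi$ to the smaller graph by minimality, and then put the colours back; the point is that removing the $4$-vertex leaves it with only two already-coloured (``rigid'') neighbours, which is exactly the slack we need to finish.

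\emph{Set-up.} Suppose $v$ is an internal $4$-vertex with neighbours $u,u',a,b$, where $u$ is a pendent vertex of $v$ on a $(3,3,3)$-face $f=[uxy]$ and $u'$ is a pendent vertex of $v$ on a $(3,3,4^-)$-face $f'=[u'x'y']$. Let $x_1,y_1$ be the neighbours of $x,y$ outside $f$, and $x_1',y_1'$ (together with $x_2'$ in the subcase $d(x')=4$, i.e.\ $f'$ a $(3,3,4)$-face) the neighbours of $x',y'$ outside $f'$. First I would do the routine bookkeeping from $G\in\mathcal G$ (no $4$- or $6$-cycles) and $d(v)=4$: the seven vertices $v,u,x,y,u',x',y'$ are pairwise distinct, the triangles $f$ and $f'$ are vertex-disjoint, $x_1\neq y_1$, and $v\notin\{x_1,y_1\}$; any surviving coincidences among the outer vertices $a,b,x_1,y_1,x_1',x_2',y_1'$ only add constraints that are harmless below.

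\emph{Reduction.} Put $G'=G-\{v,u,x,y,u',x',y'\}$. Deleting these internal vertices creates no $k$-cycle for $k\in\{1,2,4,6\}$; it leaves the exterior boundary $D$ unchanged and still good (a bad partition of $D$ is a subgraph on internal and boundary vertices, so a deletion cannot create one); and it leaves $G'$ connected, since the deleted vertices lie on the facial triangles $f,f'$ and on $v$ and enclose no region. By Lemma~\ref{lem_general-operation} and the minimality of $G$, $\phi$ super-extends to a $(1,0,0)$-colouring $c$ of $G'$; in $c$ all of $a,b,x_1,y_1,x_1',x_2',y_1'$ are coloured.

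\emph{Completion.} Call a coloured vertex $w$ \emph{saturated} if $c(w)=1$ and $w$ already has a colour-$1$ neighbour. Two elementary facts drive the finish: (A) the triangle $f=[uxy]$, whose only new neighbours are the coloured vertices $v,x_1,y_1$, can be $(1,0,0)$-coloured whenever \emph{not all three} of $v,x_1,y_1$ are saturated (if at most two are saturated, give the two triangle-vertices opposite saturated neighbours the two colours of $\{2,3\}$ and the third vertex colour $1$; if none is saturated, a proper $3$-colouring works); and (B) a symmetric statement for completing $x',y'$ once $u'$ is coloured. I then colour in the order $v,u',x',y',u,x,y$. Since $v$ has only the two coloured neighbours $a,b$, give $v$ a colour of $\{2,3\}$ unless $\{c(a),c(b)\}=\{2,3\}$, in which case set $c(v)=1$; in the first case $c(v)\neq1$, in the second $v$ has no colour-$1$ neighbour yet, so in either case $v$ is not saturated. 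Next colour $u'$ with a colour of $\{2,3\}\setminus\{c(v)\}$, so $c(u')\neq1$, which keeps $v$ unsaturated (its last uncoloured neighbour is $u$). Then complete $x',y'$ by (B), and finally complete $f=[uxy]$ by (A): $v$ is unsaturated, so not all of $v,x_1,y_1$ are saturated and $f$ can be coloured. This super-extends $\phi$ to $G$, contradicting the choice of $G$.

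\emph{Main obstacle.} The delicate point is exactly the colour-$1$ accounting hidden in (A) and (B): colouring $v$ or $u'$ with colour $1$ feeds a colour-$1$ neighbour to $f$ or $f'$, and if too many of $x_1,y_1,x_1',x_2',y_1'$ are already saturated in $c$, the naive order can stall. This is precisely the obstruction that makes an \emph{isolated} $(3,3,3)$-face irreducible; the hypothesis that $f$ hangs on a $4$-vertex carrying a second pendent face is what lets us delete $v$ and leave it with only $a,b$, giving room to keep $c(v)$ non-saturated. The remaining effort is the $(3,3,4)$-subcase of $f'$, where $x'$ has two outer neighbours $x_1',x_2'$ and (B) needs a short case analysis; should the order stall there, one re-chooses $c(u')$ or $c(v)$ (each still has slack), or, as a fallback, applies Lemma~\ref{pro_operation} to a version of $G'$ with a harmless edge inserted among the outer vertices to exclude the offending colourings. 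I expect this bookkeeping, rather than any planarity or cycle-structure subtlety, to be the heart of the proof.
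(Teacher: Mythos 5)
There is a genuine gap in the completion step, and it is exactly the point the paper's identification trick is designed to handle. Your reduction (pure deletion of the seven vertices) is legitimate, but the recoloring can stall in a realizable scenario that none of your stated slack repairs. Take $f'=[u'x'y']$ to be a second $(3,3,3)$-face (allowed, since $(3,3,4^-)$ includes $(3,3,3)$), and suppose the super-extension to $G'$ happens to give $\{c(a),c(b)\}=\{2,3\}$ while all four outer neighbours $x_1,y_1,x_1',y_1'$ are saturated (they are heavy by Lemma~\ref{lem_degree3}, but nothing prevents them from being colour~$1$ with a colour-$1$ neighbour in $G'$). Then $c(v)=1$ is forced. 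Your claim (B) is \emph{not} symmetric to (A): you are colouring the edge $x'y'$, not a triangle with a free third vertex. If $c(u')=2$ then $x'$ and $y'$ are each barred from colour $1$ (saturated outer neighbours) and from colour $2$ (adjacent to $u'$), so both must be $3$ --- impossible on an edge; choosing $c(u')=3$ fails symmetrically; and choosing $c(u')=1$ saturates $v$, after which $f=[uxy]$ has all three outer neighbours $v,x_1,y_1$ blocking colour $1$ and cannot be completed either. So the order stalls and no re-choice of $c(v)$ or $c(u')$ helps: the single unit of colour-$1$ slack at $v$ cannot serve both pendent faces at once.

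Your fallback --- inserting an edge (or identifying vertices) among the outer vertices to kill the offending colourings --- is in fact the paper's actual proof, but you cannot treat it as ``harmless'': it is the substantive half of the argument. The paper identifies the two non-pendent neighbours of the $4$-vertex (or, when they lie on the same side, identifies one of them with an outer neighbour of the $(3,3,3)$-face), which forces two colours to coincide and lets every vertex except the last be \emph{properly} $3$-coloured, eliminating all saturation bookkeeping. Verifying that this identification creates no $6^-$-cycle or triangular $7$-cycle and does not make $D$ bad requires Lemma~\ref{lem_splitting path}, Lemma~\ref{lem_sep good cycle} and Remark~\ref{rem_bad_cycle}, and splits into two geometric cases according to the cyclic position of $x_1,x_2$ around the $4$-vertex; none of this appears in your proposal. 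As written, the proof is incomplete.
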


\begin{proof}
Suppose to the contrary that $G$ has such a vertex $x$.
Denote by $[u_1u_2u_3]$ a $(3,3,3)$-face and by $[v_1v_2v_3]$ a $(3,3,4^-)$-face, with $u_1$ and $v_1$ as pendent vertices of $x$ and with $v_3$ as the $4^-$-vertex.
Denote by $x_1$ and $x_2$ the remaining neighbors of $x$. We distinguish two cases.

Case 1: assume that $x_1$ and $x_2$ lie on different sides of the path $u_1xv_1$, i.e., $x_1$ and $x_2$ are not consecutive in the cyclic order around $x$.
Remove $x,u_1,u_2,u_3,v_1,v_2$, $v_3$ from $G$ and identify $x_1$ with $x_2$, obtaining a
smaller graph $G'$ than $G$. If this operation satisfies both terms in Lemma \ref{pro_operation}, then the pre-coloring $\phi$ of $D$ can be super-extended to $G'$ by the minimality of $G$, and further to $G$ in such way:
3-color $v_3$, $v_2$, $v_1$, $x$, $u_2$, $u_3$ in turn and consequently, we can (1,0,0)-color $u_1$. 

(Term $a$) If our operation identifies two vertices of
$D$, or creates an edge that connects two vertices of $D$, then the path $x_1xx_2$ is contained in a splitting 2- or 3-path of $D$. By Lemma \ref{lem_splitting path}, this splitting path divides $D$ into two parts, one of which is a 3- or 5-cycle. So this cycle is a good cycle but now it separates $v_1$ from $u_1$, contradicting Lemma \ref{lem_sep good cycle}.

(Term $b$) If our operation creates a new $7^-$-cycle,
then this cycle corresponds to a $7^-$-path of $G$ between $x_1$ and $x_2$, which together with the path $x_1xx_2$ forms a $9^-$-cycle of $G$, say $C$.
Clearly, $C$ separates $u_1$ from $v_1$. So, $C$ is a bad
9-cycle having a $(5,5,5)$-claw. But now $C$ contains a 3-face inside, either $[u_1u_2u_3]$ or $[v_1v_2v_3]$, a contradiction.

Case 2: assume that $x_1$ and $x_2$ lie on the same side of the path $u_1xv_1$.
W.l.o.g., let $u_1,x_1,x_2,v_1$ locate in clockwise order around $x$ and so do $u_1,u_2,u_3$ along the cycle $[u_1u_2u_3]$.
Denote by $y$ the remaining neighbor of $u_2$. 
Delete $x$,$u_1$,$u_2$,$u_3$,$v_1$,$v_2$, $v_3$ and identify $x_2$ with $y$, obtaining a
smaller graph $G'$ than $G$. 
If our graph operation satisfies both terms of Lemma \ref{pro_operation}, then $\phi$ can be super-extended to $G'$ by the minimality of $G$ and further to $G$ in such way: 
3-color $x$ and $u_3$; since $x$ and $y$ receive different colors, we can 3-color $u_1$ and $u_2$; 3-color $v_3$ and $v_2$ in turn and finally, we can $(1,0,0)$-color $v_1$.

Let us show that both terms of Lemma \ref{pro_operation} do hold:

(Term $a$) Otherwise, the path $yu_2u_1xx_2$ is contained in a splitting 4- or 5-path of $D$. By Lemma \ref{lem_splitting path}, this splitting path divides $D$ into two parts, one of which is a $9^-$-cycle, say $C$. Now $C$ separates $v_1$ from $u_3$. Hence, $C$ is a bad 9-cycle with a (5,5,5)-claw.
But $C$ has to contain a 3-face inside, either $[u_1u_2u_3]$ or $[v_1v_2v_3]$, a contradiction.

(Term $b$) Suppose our operation creates a new $7^-$-cycle,
then it corresponds to a $7^-$-path of $G$ between $y$ and $x_2$, which together with the path $yu_2u_1xx_2$ forms a $11^-$-cycle of $G$, say $C$.
Clearly, $C$ separates $v_3$ from $u_3$. So $C$ is a bad cycle containing either $u_3$ or $v_3$ inside. For the former case, because of the existence of $[u_1u_2u_3]$ and $xx_1$, Remark \ref{rem_bad_cycle}(\ref{term_incident-edges}) implies that $xx_1$ is a chord of $C$, which thereby has a $(3,7,3,8)$-edge-claw. Now $u_3$ is a light vertex with three light neighbors, a contradiction to Lemma \ref{lem_degree3}. 
For the latter case, the interior of $C$, as a bad cycle, contains the triangle $[v_1v_2v_3]$, which is impossible.
\end{proof}

\begin{lemma}\label{lem_4vertex-1incident-1pendent}
$G$ contains no internal 4-vertice incident with a $(3,4^-,4)$-face and having a pendent $(3,3,4^-)$-face.	
\end{lemma}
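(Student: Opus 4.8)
The plan is to argue by contradiction, adapting the method of Lemma~\ref{lem_2pendent}. Suppose $G$ contains an internal $4$-vertex $x$ incident with a $(3,4^-,4)$-face $f=[xab]$ (so $d(x)=4$, and, say, $d(b)=3$, $d(a)\le 4$) and having a pendent $(3,3,4^-)$-face $[v_1v_2v_3]$ with pendent vertex $v_1$ of $x$, where $d(v_3)\le 4$; let $w$ be the fourth neighbour of $x$. The first step is local bookkeeping with the forbidden $4$-cycles: one checks that $x,a,b,w,v_1,v_2,v_3$ are distinct, that $\{a,b,w\}$ is disjoint from $\{v_1,v_2,v_3\}$, that $w$ has no neighbour among $v_1,v_2,v_3$, and that $w$ is non-adjacent to at least one of $a,b$ (else the $4$-cycle $xawb$ occurs). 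I would also record the rotation at $x$: since $ab$ bounds a face at $x$, exactly one of $a,b$ is consecutive to $w$ at $x$, and whichever of $a,b$ is \emph{not} consecutive to $w$ is non-adjacent to $w$ (a triangle of $G$ distinct from $D$ is a $3$-face, so adjacency of two neighbours of $x$ forces them consecutive at $x$).

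For the reduction I would delete $x,v_1,v_2,v_3$ (and, in some cases, the degree-$3$ vertex $b$ as well) and perform one identification designed so that when $x$ is reinserted it sees at most two colours. Roughly: if $a\not\sim w$, delete $b$ too and identify $a$ with $w$; if $a\sim w$ (which, as noted, forces the rotation $b,a,w,v_1$ at $x$ and hence $bw\notin E(G)$), keep $b$ and identify $w$ with $b$; and, should the chosen identification create a short cycle, fall back---as in Case~2 of Lemma~\ref{lem_2pendent}---to identifying $w$ with the third neighbour of $v_2$ (again after deleting $b$). The unifying point making these valid via Lemma~\ref{pro_operation} is that in the main cases the two identified vertices are \emph{non-consecutive} at $x$. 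Hypothesis~(a) then holds because, were the two identified vertices both on $D$, the $2$-path joining them through $x$ would be a splitting $2$-path and Lemma~\ref{lem_splitting path}(1) would make them adjacent, contradicting non-consecutiveness. For hypothesis~(b): any cycle created corresponds to a path $Q$ between the identified vertices in $G$ minus the deleted set, so that $C:=Q$ together with the $2$-path through $x$ is a cycle of $G$ with $|C|=|Q|+2$ whose two edges at $x$ are non-consecutive. Hence $C$ bounds no face; peeling off, via chords of $C$, the $3$-faces they cut off leaves a chordless non-facial cycle still passing through $x$ along those two edges, which is therefore separating, hence bad by Lemma~\ref{lem_sep good cycle}, hence of length $\ge 9$ by Lemma~\ref{lem_bad-cycle}. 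Since $G$ has no $4$- or $6$-cycle and a bad cycle has length $\ge 9$, this already rules out $|C|\in\{5,7,8\}$; and $|C|=9$ is impossible because, deleting $x$ and identifying, the unique degree-$3$ interior vertex of the bad $9$-cycle would become the centre of a claw on a $7$-cycle of $G'\in\mathcal{G}$. So no $6^-$-cycle and no triangular $7$-cycle is created.

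Granting $G'$ meets Lemma~\ref{pro_operation}, $\phi$ super-extends to $G'$ by the minimality of $G$; pulling back, I colour $v_3,v_2,x$, then $b$ (if it was deleted), then $v_1$. Here $v_3$ has at most two coloured neighbours, so it is $3$-coloured; then $v_2$ ($v_1$ still uncoloured) is $3$-coloured; then $x$ is $3$-coloured because the identification collapses its coloured neighbours to at most two colours, and when $b$ was deleted I choose $c(x)$ so that $b$, on reinsertion, also sees at most two colours (or, failing that, three distinct colours with exactly one equal to $1$, permitting a $(1,0,0)$-colouring of $b$); finally $v_1$, all three neighbours coloured, is $(1,0,0)$-coloured. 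A routine check, using that a vertex $3$-coloured with colour~$1$ currently has no colour-$1$ neighbour and that the two identified vertices are non-adjacent in $G$, shows this is a valid $(1,0,0)$-coloring of $G$ super-extending $\phi$, contradicting the choice of $G$.

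The step I expect to be the real obstacle is the case distinction for the reduction: one must verify that ``$a\not\sim w$'' versus ``$a\sim w$'' (with the rotations each forces) genuinely exhausts the possibilities, pick in each case an identification partner non-consecutive with its mate at $x$ so that the separating-bad-cycle argument applies, invoke the third neighbour of $v_2$ as a fallback when even that identification creates a short cycle, and---for whichever vertex set was deleted---make the colour recovery above go through, exactly the kind of bookkeeping done in Lemmas~\ref{lem_splitting path} and~\ref{lem_2pendent}. A secondary point to check is that the ``chord-peeling'' in hypothesis~(b) really terminates at a chordless non-facial cycle through $x$ along the two distinguished edges.
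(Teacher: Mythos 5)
Your overall strategy is the paper's: delete the local configuration around the $4$-vertex, identify two of its remaining neighbours so that after pulling back it sees at most two colours, verify the hypotheses of Lemma~\ref{pro_operation} via separating bad cycles, and finish with the pendent vertex $(1,0,0)$-coloured last. The gap is in the choice of which pair to identify. You split on \emph{adjacency} ($a\sim w$ versus $a\not\sim w$), but the property your verification of hypothesis~(b) actually needs --- and which you yourself flag as ``the unifying point'' --- is that the identified pair be \emph{non-consecutive} in the rotation at $x$. These are not the same. If the rotation at $x$ is $a,b,v_1,w$, then $a$ and $w$ are consecutive even when $a\not\sim w$, yet your rule tells you to identify $a$ with $w$. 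In that situation the cycle $C$ formed by a path $Q$ from $a$ to $w$ together with $axw$ need not separate two neighbours of $x$: it can simply bound the face of $G$ lying between the edges $xa$ and $xw$. Since $a\not\sim w$ that face has size $5,7,8,\dots$, and if it is a $5$-face the identification creates a $3$-cycle (and a $7$-face creates a $5$-cycle), so hypothesis~(b) of Lemma~\ref{pro_operation} genuinely fails and your separating-bad-cycle argument has nothing to grip. The correct rule, which is what the paper's proof implements, is dictated by the rotation, not by adjacency: identify $w$ with whichever of $a,b$ is consecutive to the pendent vertex $v_1$ (equivalently, non-consecutive to $w$, hence automatically non-adjacent to $w$); then the $2$-path through $x$ always separates the remaining triangle vertex from $v_1$, each side carries one of the two $3$-faces, and both hypotheses follow exactly as you argue in your good sub-case. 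Your ``fallback'' of identifying $w$ with the third neighbour of $v_2$ is not needed once this is fixed, and as proposed it is not established (it does nothing to help colour the re-inserted triangle vertex).

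Two secondary points. First, in hypothesis~(a) you only treat the case where both identified vertices lie on $D$; you must also handle the identification creating an edge between two vertices of $D$ (one identified vertex on $D$, the other with a neighbour on $D$), which requires the splitting $3$-path case of Lemma~\ref{lem_splitting path}. Second, the colour recovery for the deleted triangle vertex is the one genuinely delicate step and you leave it as an aside: the paper resolves it by keeping the identified triangle vertex and deleting the other one, and then, when the deleted vertex $u_1$ has degree~$4$ with three distinctly coloured outside neighbours, giving $u_1$ the colour of the (degree-$3$) identified vertex $u_2$ and properly recolouring $u_2$. Your sketch (``choose $c(x)$ so that $b$ sees at most two colours'') can be made to work --- take $c(x)$ equal to the colour of $b$'s third neighbour whenever that colour differs from $c(a)$ --- but as written the ``or, failing that, $(1,0,0)$-colour $b$'' branch is unsound, since a colour-$1$ neighbour of $b$ may already have a colour-$1$ neighbour of its own.
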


\begin{proof}
Suppose to the contrary that such vertex exists, say $u$.
Denote by $u_1,\ldots, u_4$ the neighbors of $u$ locating in clockwise order around $u$. W.l.o.g., let $[uu_1u_2]$ be a $(3,4^-,4)$-face and $[u_3u_3'u_3'']$ be a pendent $(3,3,4^-)$-face of $u$.
Delete $u_1,u,u_3,u_3',u_3''$ from $G$ and identify $u_2$ with $u_4$, obtaining a new smaller graph $G'$. Similarly, to complete the proof, it suffices to doing two things.

Firstly, we shall show that both terms in Lemma \ref{pro_operation} hold.

(Term $a$)\quad If our operation identifies two vertices of
$D$, or creates an edge that connects two vertices of $D$, then the path $u_2uu_4$ is contained in a splitting 2- or 3-path of $D$. By Lemma \ref{lem_splitting path}, this splitting path divides $D$ into two parts, one of which is a 3- or 5-cycle, say $C$. Now $C$ separates $u_1$ from $u_3$, a contradiction.

(Term $b$)\quad If our operation creates a new 7$^-$-cycle, then $G$ has a $9^-$-cycle $C$ that contains the path $u_2uu_4$. Since $C$ separates $u_1$ from
$u_3$, $C$ is a bad 9-cycle with a (5,5,5)-claw, contradicting that $C$ contains a triangle either $[uu_1u_2]$ or $[u_3u_3'u_3'']$ inside.

Secondly, we shall show that any $(1,0,0)$-coloring of $G'$ can be super-extended to $G$. This can be done in the following way. Since one of $u_3'$ and $u_3''$ has degree 3 and the other degree at most 4, we can 3-color them. Notice that $u_1$ has degree either 3 or 4. Since $u_2$ and $u_4$ receive the same color,
if we can 3-color $u_1$, then consequently we can 3-color $u$ and (1,0,0)-color $u_3$ in turn, we are done.
Hence, we may assume that $u_1$ has degree 4 and its neighbors except $u$ are colored pairwise distinct. In this case, give the color of $u_2$ to $u_1$. Since $u_2$ has degree 3, we can recolor it properly. Since $u_1$ and $u_4$ are colored the same, we can 3-color $u$ and then (1,0,0)-color $u_3$. 
\end{proof}

\begin{lemma}\label{lem_4vertex-2incident}
	$G$ has no 4-vertices incident with two $(3,4^-,4)$-faces.
\end{lemma}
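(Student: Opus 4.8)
The plan is to show this configuration is \emph{reducible}, following the template of Lemmas \ref{lem_2pendent} and \ref{lem_4vertex-1incident-1pendent}: realise the configuration inside the minimal counterexample $G$, delete a few vertices and identify two of $u$'s neighbours to get a strictly smaller plane graph $G'$, invoke minimality to super-extend $\phi$ to $G'$, and then recolour the deleted vertices.

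First I would pin down the local picture. Suppose $u$ is an internal $4$-vertex incident with two $(3,4^-,4)$-faces. Since $G$ has no $4$-cycle, two $3$-faces at $u$ cannot share an edge, so the two faces are \emph{opposite}: writing $u_1,u_2,u_3,u_4$ for the neighbours of $u$ in cyclic order, they are $f_1=[uu_1u_2]$ and $f_2=[uu_3u_4]$, and the faces between $u_2,u_3$ and between $u_4,u_1$ are $5^+$-faces (again because $G$ has no $4$-cycle). Since $d(u)=4$ and each $f_i$ is a $(3,4^-,4)$-face, each of $f_1,f_2$ has a $3$-vertex among its two vertices other than $u$. Up to the relabelling swapping $u_1\leftrightarrow u_2$ and $u_3\leftrightarrow u_4$ I may assume $d(u_1)=3$, and then there are two cases: the \emph{diagonal} case $d(u_3)=3$ and the \emph{adjacent} case $d(u_4)=3$. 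In the diagonal case I delete $u,u_1,u_3$ and identify $u_2$ with $u_4$; in the adjacent case I delete $u,u_1,u_4$ and identify $u_2$ with $u_3$. Deleting $u$ first makes the two vertices to be identified cofacial, so the operation keeps the graph plane, and $G'$ is strictly smaller than $G$.

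Next I would verify the hypotheses of Lemma \ref{pro_operation}. For $(a)$: if the two identified vertices both lay on $D$, then a $2$-path of the form $u_2uu_j$ would be a splitting $2$-path of $D$, so by Lemma \ref{lem_splitting path}(1) one of the two resulting cycles would be a triangle, forcing a chord of $D$ or a $4$-cycle in $G$ — impossible. For $(b)$: a new $6^-$-cycle or triangular $7$-cycle through the identified vertex $w$ unfolds, after reinserting $u$, to a cycle $C$ of length at most $9$ in $G$ containing the path $u_2uu_j$; since at least one remaining neighbour of $u$ lies in the interior of $C$, $C$ is separating, contradicting Lemma \ref{lem_sep good cycle} when $|C|\le 8$, and when $|C|=9$ (the triangular-$7$ case) $C$ would be a bad cycle with a $(5,5,5)$-claw, whose unique interior vertex cannot be adjacent to two consecutive vertices of $C$ — but $u_1$ is adjacent to the consecutive pair $u,u_2$ (or, in the adjacent case, $C$ already has two interior vertices). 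The genuinely delicate point is that in the adjacent case identifying $u_2$ with $u_3$ closes up the $5^+$-face between them into a cycle of length two less; when that face is small this is not handled by the separating-cycle argument, and there I would instead argue directly that the two pendant triangles $f_1,f_2$ together with that small face already force a forbidden short separating cycle, or a bad cycle whose prescribed interior (from Lemma \ref{lem_bad-cycle} and Remark \ref{rem_bad_cycle}) is too large — this is the sub-case the proof must absorb.

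Granting the reduction, $\phi$ super-extends to $G'$ by minimality; write $w$ for the identified vertex, $\alpha$ for its colour, and $a',b'$ for the third neighbours of the two deleted $3$-vertices. To recolour $G$ I would first $3$-colour $u$ — its only coloured neighbours are the two copies of $w$, so it sees only the colour $\alpha$ — and then colour the two deleted $3$-vertices last; each now sees exactly three coloured neighbours, so it can be $3$-coloured unless all three colours appear, in which case exactly one of its neighbours has colour $1$ and we $(1,0,0)$-colour it with $1$. I expect the main obstacle to be precisely the colour-$1$ bookkeeping in the case $\alpha=1$: then both deleted $3$-vertices may be pushed onto colour $1$, leaving $u$ with two colour-$1$ neighbours, and one must also avoid creating a colour-$1$ vertex with two colour-$1$ neighbours. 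As in the proof of Lemma \ref{lem_4vertex-1incident-1pendent} I would resolve this by a local recolouring — recolouring one deleted $3$-vertex (which has the colour-$1$ neighbour $w$) onto a colour in $\{2,3\}$, or recolouring $w$ itself — splitting further according to whether $f_1$ or $f_2$ is in fact a $(3,3,4)$-face and invoking Lemmas \ref{lem_light_triangle}, \ref{lem_2pendent} and \ref{lem_degree3} to control the colours available at $a',b'$ and at the third neighbours of $u_2,u_3,u_4$. The reduction itself (the two preceding paragraphs) is routine given the earlier lemmas; the colour accounting is where the real work lies.
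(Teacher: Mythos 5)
Your reduction is not the paper's: the paper first splits off the case where one of the two faces is a $(3,3,4)$-face (there pure deletion of the five vertices suffices), and when both faces are $(3,4,4)$-faces it identifies vertices at distance two from $v$ (the outer neighbours $v_2',v_4'$ of the two $3$-vertices, or $v_1'$ with $v_3'$), so that the relevant splitting paths have length $4$ and every newly created short cycle of $G'$ lifts to an $8^+$-cycle of $G$ that necessarily encloses part of the configuration. Your plan instead deletes $u$ and the two $3$-vertices and identifies the two surviving neighbours of $u$. In the diagonal case this is defensible, but in the adjacent case it has a genuine gap: identifying the \emph{consecutive} neighbours $u_2$ and $u_3$ collapses the face $g$ of the wedge $u_2uu_3$ into a closed walk two shorter. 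That new cycle comes from a face, so it is \emph{not} separating, and neither Lemma \ref{lem_sep good cycle} nor the bad-cycle/claw analysis you invoke can touch it. Nothing in the hypotheses prevents $g$ from being an $8$-face, in which case you create a $6$-cycle and the operation is outright illegal; if $g$ is a $7$-face you create a new $5$-cycle, which already violates condition $(b)$ of Lemma \ref{pro_operation}, forcing you back to Lemma \ref{lem_general-operation} and a by-hand verification that $D$ stays good — and fresh $5$-cells are precisely the building blocks of bad partitions. Your proposed patch ("a forbidden short separating cycle, or a bad cycle whose prescribed interior is too large") presumes the offending cycle separates something, which it does not. This sub-case is exactly where the paper's Case 2.2 does its hardest work (ruling out a new $(3,7,3,8)$-edge-claw of $D$), and your reduction does not reach it.

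Even in the diagonal case, the colouring step is where you stop short, and the difficulty you flag is real: if a deleted $3$-vertex $u_1$ sees all three colours and its unique colour-$1$ neighbour is the external-to-the-configuration vertex $u_1'$, then $u_1'$ may already be saturated (have a colour-$1$ neighbour in $G'$), and $u_1$ cannot be coloured at all without recolouring; one must also prevent $u$ from acquiring two colour-$1$ neighbours when both $u_1$ and $u_3$ are forced onto colour $1$. These obstructions can probably be dodged by choosing $u$'s colour as a function of $\gamma_1=c(u_1')$, $\gamma_3=c(u_3')$ and of which of $u_2,u_4$ inherits the colour-$1$ neighbour of the identified vertex $w$, but that case analysis is the content of the proof and is absent. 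The paper sidesteps most of it by identifying the \emph{outer} neighbours of the $3$-vertices, so that after giving $v$ the colour $\alpha$ of the identified vertex each $3$-vertex sees a repeated colour and can simply be $3$-coloured, with only one residual $(1,0,0)$-case. As written, the proposal establishes neither the legality of the reduction in the adjacent case nor the extendability of the colouring, so it does not prove the lemma.
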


\begin{proof}
Suppose to the contrary that $G$ has such a 4-vertex $v$, incident with two $(3,4^-,4)$-faces $T_1=[vv_1v_2]$ and $T_2=[vv_3v_4]$. 
W.l.o.g., let $v_1,v_2,v_3,v_4$ locate in clockwise order around $v$. 

Case 1: assume that at least one of $T_1$ and $T_2$ is a $(3,3,4)$-face, w.l.o.g, say $T_1$.
Delete $v,v_1,\cdots,v_4$, obtaining a smaller graph $G'$ than $G$. Since we only remove vertices, both terms in Lemma \ref{pro_operation} hold. Hence, $\phi$ can be super-extended to $G'$ by the minimality of $G$, and further to $G$ in such way: 3-color the vertices of $T_2$. Denote by $v_1'$ and $v_2'$ the remaining neighbors of $v_1$ and $v_2$, respectively. We can always 3-color $v_1$ and $v_2$ except the case $\phi(v_1')=\phi(v_2')\neq \phi(v)$, for which we distinguish three subcases: 
if $1\notin \{\phi(v_1'),\phi(v)\}$, then give the color 1 to both $v_1$ and $v_2$, completing the super-extension;
if $\phi(v)=1$, then assign $v_1$ with the color 1 and consequently, we can 3-color $v_2$;
if $\phi(v_1')=1$, then recolor $v$ by the color 1, and then 3-color both $v_1$ and $v_2$.

Case 2: assume that both $T_1$ and $T_2$ are $(3,4,4)$-faces. W.l.o.g., let $d(v_1)=4$.
We distinguish two cases.

Case 2.1: assume that $d(v_3)=4$.
Denote by $v_2'$ and $v_4'$ the outer neighbors of $v_2$ and $v_4$, respectively. 
We delete all vertices of $T_1$ and $T_2$, and identify $v_2'$ with $v_4'$,
obtaining a new graph $G'$. 
We will show that both terms in Lemma \ref{pro_operation} do hold:

(Term $a$) 
If our operation identifies two vertices of
$D$, or creates an edge that connects two vertices of $D$, then the path $v_2'v_2vv_4v_4'$ is contained in a splitting 4- or 5-path of $D$. By Lemma \ref{lem_splitting path}, this splitting path divides $D$ into two parts, one of which is a $9^-$-cycle, say $C$. Now $C$ separates $v_1$ from $v_3$ and contains a triangle inside, a contradiction.

(Term $b$) 
If our operation creates a new 7$^-$-cycle, then $G$ has a $11^-$-cycle $C$ that contains the path $v_2'v_2vv_4v_4'$. Now $C$ separates $v_1$ from
$v_3$, both has degree 4, contradicting Remark \ref{rem_bad_cycle}(\ref{term_degree}).

We will show that any $(1,0,0)$-coloring of $G'$ can be super-extended to $G$:
3-color $v_1$ and $v_3$. Denote by $\alpha$ the color $v_2'$ and $v_4'$ received.
If $\alpha$ has not been used by both $v_1$ and $v_3$, then give $\alpha$ to $v$ and consequently, we can 3-color $v_2$ and $v_4$.
W.l.o.g., we may next assume that $v_3$ has color $\alpha$. 3-color $v_2$ and then (1,0,0)-color $v$. Since $v_3$ and $v_4'$ received the same color, we can 3-color $v_4$.

Case 2.2: assume that $d(v_4)=4$.
Denote by $v_i'$ the neighbor of $v_i$ for $i\in\{2,3\}$, 
and by $v_i'$ and $v_i''$ the remaining neighbors of $v_i$ locating in clockwise order around $v_i$ for $i\in\{1,4\}$？？？？？？？.
Delete all vertices of $T_1$ and $T_2$ and identify $v_1'$ with
$v_3'$. Denote by $z$ the resulting vertex and $G'$ the resulting graph. 
Notice that our operation may create some new $7^+$-cycles. 

Firstly, by the same argument as in Case 2.1, Term ($a$) does hold.

Secondly, we claim that the operation creates no $6^-$-cycles. Otherwise, $G$ has a $10^-$-cycle $C$ that contains the path $v_1'v_1vv_3v_3'$. So, $C$ is a bad cycle containing either $v_2$ or
$v_4$ inside. For the former case, since a bad $10^-$-cycle has no chords, $v_1$ has two neighbors inside $C$, contradicting Remark \ref{rem_bad_cycle}(\ref{term_neighbor}). For the latter case, $d(v_4)=4$ contradicts Remark 
\ref{rem_bad_cycle}(\ref{term_degree}).

Finally, we do not make $D$ bad. Otherwise, since we create no $6^-$-cycles, by the argument for the proof of Lemma \ref{pro_operation}, we can deduce that the new vertex $z$ is incident with two cells of $D$ in $G'$ that are created by our operation, where one cell has length 7 and the other length 7 or 8.
These two cells correspond to two cycles of $G$ containing the path $v_1'v_1vv_3v_3'$, one cycle (say $C'$) contains $v_2$ inside and the other (say $C''$) contains $v_4$ inside. 
Clearly, one of $C'$ and $C''$ has length 11 and the other length 11 or 12.
Since $d(v_4)=4$, we can deduce that $|C''|=12$ by Remark \ref{rem_bad_cycle} (2). So, $|C'|=11$. Hence, the way we make $D$ bad is that our operation make $D$ have a $(3,7,3,8)$-edge-claw in $G'$ where the 7-cell and 8-cell are created.
Let $e$ denote the common edge of these two cells.
Since $v_1$ is incident with two edges $v_1v_1''$ and $v_1v_2$ inside $C'$, we can deduce that $v_1v_1''$ is a chord of $C'$, which has a $(3,7,3,8)$-edge-claw in $G$ by Remark \ref{rem_bad_cycle} (3).
Let $C'=[v_3'v_3vv_1v_1'v_1''y_1\cdots y_5]$. 
Racall that $v_1'$ and $v_3'$ are the two vertices we identified. So, $e$ corresponds to either $v_3'y_5$ or $v_1'v_1''$.
For the former case, the vertices $v_1'',y_1,\cdots,y_4$ lie on $D$. A contradiction follows by applying Lemma \ref{lem_splitting path} to the splitting 4-path $v_1''v_1v_2v_2'y_4$ of $D$ in $G$.
For the latter case, by substituting $v_1v_1''$ for $v_1v_1'v_1''$ from $C''$, we obtain a 11-cycle of $G$ that contains $v_4$ inside, a contradiction.

Because of the conclusions in the previous three paragraphs, by the minimality of $G$, we can super-extend $\phi$ from $D$ to $G'$.
We complete a (1,0,0)-coloring of $G$ as follows:
3-color $v_4$ and $v_1$. Since $v_1$ and $v_3'$ receive different colors, we can 3-color $v_3$ and $v$. Finally, we can (1,0,0)-color $v_2$ except the case that $v_2'$ has the color 1 and between $v$ and $v_1$, one has the color 2 and the other 3.
Notice that the colors of $v_4$, $v_3$ and $v$ are pairwise distinct.
Recolor $v$ by 1 and finally, we can 3-color $v_2$.
\end{proof}

\begin{lemma}\label{lem_5vertex-2incident}
	$G$ has no internal 5-vertices incident with two faces, one is a weak $(3,3,5)$-face and the other is a
	 $(3, 4^-,5)$-face.
\end{lemma}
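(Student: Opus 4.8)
The plan is to argue by contradiction in the usual way, following the template established in Lemmas~\ref{lem_2pendent}--\ref{lem_4vertex-2incident}: suppose $v$ is an internal $5$-vertex incident with a weak $(3,3,5)$-face $T_1$ and a $(3,4^-,5)$-face $T_2$, delete a suitable cluster of vertices around $v$ (and possibly identify a pair of external-to-the-cluster neighbors to prevent degree drops), invoke Lemma~\ref{pro_operation} to super-extend $\phi$ to the smaller graph $G'$, and then put back the deleted vertices in a carefully ordered sequence, $3$-coloring most of them and $(1,0,0)$-coloring the last one or two. Concretely, write $T_1=[vv_1v_2]$ and $T_2=[vv_3v_4]$ with $v_1,v_2,v_3,v_4$ in clockwise order around $v$ (and $d(v_1)=d(v_2)=3$, $d(v_2)$ or $d(v_1)$ playing the role so that $T_1$'s outer neighbor being light is used), and let $x$ be the outer neighbor of the light vertex witnessing that $T_1$ is weak; since $G$ has no $4$-cycles, $T_1$ and $T_2$ share only the vertex $v$, and one should first record the possible coincidences among the relevant second-neighborhoods using that $G$ has no $4$- or $6$-cycles. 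The deletion set will be $\{v,v_1,v_2,v_3,v_4\}$ together with the light pendent vertex and its triangle if needed; the identification (if any) will be of two of the remaining outer neighbors lying on opposite sides of a path through $v$, exactly as in Case~1 of Lemma~\ref{lem_2pendent}.

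The verification of the two hypotheses of Lemma~\ref{pro_operation} should go through by the same mechanism used repeatedly above. For Term~($a$): if the operation identifies two vertices of $D$ or inserts a chord of $D$, then a short path through $v$ (of length $\le 5$) extends to a splitting $5^-$-path of $D$; by Lemma~\ref{lem_splitting path} this produces a $9^-$-cycle which is then separating and must be a bad $9$-cycle with a $(5,5,5)$-claw, contradicting the fact that it contains one of the $3$-faces $T_1,T_2$ (or the pendent $3$-face) in its interior, by Lemma~\ref{lem_sep good cycle} and Remark~\ref{rem_bad_cycle}. For Term~($b$): a newly created $6^-$-cycle or triangular $7$-cycle yields, by closing it up through $v$, a bad $11^-$-cycle of $G$ whose interior contains a $5$-vertex or a $4$-vertex or a whole $3$-face, contradicting Remark~\ref{rem_bad_cycle}(\ref{term_degree}); the presence of the light/outer structure of the weak face and the degree-$5$ vertex $v$ are exactly what forces the contradiction here. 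These are routine once the deletion set and the closing path are pinned down.

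The genuinely delicate part is the color-extension step at the end, because $v$ has degree $5$ and two of its deleted neighbors ($v_1,v_2$, the $(3,3)$-part of the weak face) need room to be recolored, while the weakness of $T_1$ — the light outer neighbor $x$ of degree $3$ — must be exploited: the point of "weak" is presumably that $x$ has few colored neighbors when we reach it, giving an extra degree of freedom. I expect the order to be: $3$-color $v_3,v_4$ and the pendent $3$-face (using that its $4^-$-vertex has at most one uncolored constraint), then handle $v$; if $v$ cannot be $3$-colored directly, use that exactly one neighbor of $v$ has color $1$ to $(1,0,0)$-color it, or recolor one of the degree-$3$ neighbors $v_1,v_2$ to free a color, exactly the kind of swap used in Case~1 of Lemma~\ref{lem_4vertex-2incident}. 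The main obstacle will be the bookkeeping of the several bad sub-cases for the final vertex (whether the repeated color is $1$, whether $v$'s color is $1$, whether the light vertex $x$ blocks a recoloring) and checking that in each the light vertex $x$ can be $(1,0,0)$-colored last using its low degree — but no new idea beyond those already deployed in the preceding lemmas should be required. One should also separate, as in Lemma~\ref{lem_4vertex-2incident}, the case where the two "outer" neighbors to be identified lie on the same side of the path $v_1vv_4$ (forcing identification with a neighbor of $v_1$ or $v_2$ instead), but this is a parallel copy of the earlier argument.
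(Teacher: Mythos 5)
Your high-level template (delete a cluster around $v$, identify or join two outside vertices, invoke Lemma~\ref{pro_operation}, then recolor in a careful order) is indeed the paper's strategy, but the proposal leaves precisely the load-bearing choices unspecified, and the guesses you do commit to would not work. The paper's reduction is not an identification of two \emph{outer} neighbors on opposite sides of a path through $v$: writing the faces as $[vv_1v_2]$ (weak $(3,3,5)$) and $[vv_3v_4]$ ($(3,4^-,5)$) with $v_5$ the fifth neighbor of $v$, it deletes $v,v_1,v_2,x'$ together with part of $[vv_3v_4]$ and then, splitting on $d(v_4)$, either identifies the two \emph{neighbors} $v_3$ and $v_5$ of $v$ (when $d(v_4)=3$) or inserts an edge between $v_3$'s third neighbor $v_3'$ and $v_5$ (when $d(v_4)=4$). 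This choice is not bookkeeping: its whole purpose is to force two of the three already-colored neighbors $v_3,v_4,v_5$ of $v$ to share a color (or to force $v_3$ to be colorable with $1$), so that $v$ can be properly $3$-colored rather than merely $(1,0,0)$-colored. That slack is essential, because the last vertex to be colored, the light vertex $x\in\{v_1,v_2\}$ adjacent to $x'$, may be forced to take color $1$; if $v$ had also been given color $1$ by the improper option, $v$ would end up with two neighbors of color $1$. Without the identification, $v_3,v_4,v_5$ can receive three distinct colors and your proposed order ($3$-color $v_3,v_4$, ``then handle $v$'') breaks down exactly here.

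There are also signs you have not pinned down the configuration: this lemma has no pendent $3$-face (both triangles are incident with $v$), and the light outer neighbor $x'$ witnessing weakness is a single internal $3$-vertex, not ``a light pendent vertex and its triangle.'' Finally, the color-extension step — the part you explicitly defer with ``I expect the order to be\ldots'' and ``the main obstacle will be the bookkeeping'' — is the actual content of the proof (in Case~2 one must additionally analyze whether $\phi(v_3')=\phi(v_5)$ and use that the inserted edge forces both to be color $1$ with no other color-$1$ neighbors). As written, the proposal is a plausible plan rather than a proof, and the plan's one concrete commitment (which vertices to identify) is wrong.
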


\begin{proof}
Suppose to the contrary that $G$ has such a vertex $v$. 
Denote by $v_1,\dots,v_5$ the neighbors of $v$ locating in clockwise order around $v$ with $[vv_1v_2]$ being a weak $(3,3,5)$-face and $[vv_3v_4]$ being a $(3, 4^-,5)$-face.
Let $x'$ be a light outer neighbor of $[vv_1v_2]$. Between $v_1$ and $v_2$, denote by $x$ the one adjacent to $x'$ and by $y$ the other. Clearly, $v_4$ is of degree 3 or 4. We distinguish two cases.

Case 1: assume $d(v_4)=3$.
Delete $v, v_1, v_2, x', v_4$ and identify $v_3$ with
$v_5$, obtaining a smaller graph $G'$ than $G$.
We shall show that both terms in Lemma \ref{pro_operation} hold.

(Term $a$) If our operation identifies two vertices of
$D$, or creates an edge that connects two vertices of $D$, then the path $v_3vv_5$ is contained in a splitting 2- or 3-path of $D$. By Lemma \ref{lem_splitting path}, this splitting path divides $D$ into two parts, one of which is a 3- or 5-cycle, say $C$. Now $C$ separates $v_2$ from $v_4$, a contradiction.

(Term $b$) If our operation creates a new 7$^-$-cycle, then $G$ has a $9^-$-cycle $C$ that contains the path $v_3vv_5$. Since $C$ separates $v_2$ from
$v_4$, $C$ is a bad 9-cycle with a (5,5,5)-claw, contradicting that $C$ contains a triangle either $[vv_1v_2]$ or $[vv_3v_4]$ inside.

Hence, the coloring $\phi$ of $D$ can be super-extended to $G'$ by Lemma \ref{pro_operation} and further to $G$ as follows: 3-color $v_4,v,x',y$ in turn and consequently, we can (1,0,0)-color $x$. This is a contradiction.

Case 2: assume $d(v_4)=4$. It follows that $d(v_3)=3$.
Let $v_3'$ be the remaining neighbor of $v_3$. 
Delete $v,v_1,v_2,v_3,v_4,x'$ and insert an edge
between $v_3'$ and $v_5$, obtaining a smaller graph $G'$ than $G$.

(Term $a$) Notice that our operation identifies no vertices. Suppose to the contrary that it creates an edge that connects two vertices of $D$, then the path $v_3'v_3vv_5$ is contained in a splitting 3-path of $D$. By Lemma \ref{lem_splitting path}, this splitting path divides $D$ into two parts, one of which is a $5$-cycle. Now this cycle separates $v_2$ from $v_4$, a contradiction.

(Term $b$) If our operation creates a new 7$^-$-cycle, then $G$
has a 9$^-$-cycle $C$ containing path $v_3'v_3vv_5$. Clearly, $C$
separates $v_2$ from $v_4$. Hence, $C$ is a bad 9-cycle that contains a triangle either $[vv_1v_2]$ or $[vv_3v_4]$ inside, a contradiction.

Hence, $\phi$ can be super-extended to $G'$ by Lemma \ref{pro_operation} and further to $G$ as follows: 3-color $v_4$. If $\phi(v_3')\neq \phi(v_5)$ or $\phi(v_3')=\phi(v_5)=\phi(v_4)$, then we can first 3-color $v$ and $v_3$, next 3-color $x'$ and $y$ in turn and consequently, we can (1,0,0)-color $x$, we are done. Hence, we may assume that $\phi(v_3')=\phi(v_5)\neq \phi(v_4)$. Since $v_3'$ and $v_5$ are adjacent in $G'$, both $v_3'$ and $v_5$ have color 1 and have no other neighbors colored 1.  So we can give the color 1 to $v_3$ and then 3-color $v$. By the same way as above, we color $v_2,v_1$ and $v$, we are done as well.
\end{proof}

\begin{lemma}\label{lem_5vertex-2incident-1pendent}
	If $v$ is an internal 5-vertex of $G$ incident with two 3-faces, one is a weak $(3,3,5)$-face and the other is a weak $(3,5,5^+)$-face,
	then $v$ has no pendent $(3,3,3)$-faces.
\end{lemma}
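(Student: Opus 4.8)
The plan is to argue by contradiction in the style of Lemmas \ref{lem_2pendent}--\ref{lem_5vertex-2incident}: assume $v$ is a counterexample, delete a small cluster of vertices around $v$ together with one identification or edge-insertion to obtain a smaller plane graph $G'$, verify that this operation meets the two hypotheses of Lemma \ref{pro_operation}, super-extend $\phi$ to $G'$ by the minimality of $G$, and then re-color the deleted vertices to reach a contradiction.

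First I would set up notation. Let $v_1,\dots,v_5$ be the neighbors of $v$ in clockwise order, with $f_1=[vv_1v_2]$ the weak $(3,3,5)$-face (so $d(v_1)=d(v_2)=3$) and $f_2=[vv_3v_4]$ the weak $(3,5,5^+)$-face (so one of $v_3,v_4$ has degree $3$ and the other degree at least $5$). Weakness of $f_1$ yields a light outer neighbor $x'$ of, say, $v_1$; weakness of $f_2$ yields a light outer neighbor $y'$ of the degree-$3$ member of $\{v_3,v_4\}$. Since $G$ has no $4$-cycle and $D$ has no chord, the pendent $(3,3,3)$-face of $v$ cannot be attached at $v_1$ or $v_2$, nor at the degree-$3$ member of $\{v_3,v_4\}$ --- each of these already lies on a triangle through $v$, and a second triangle there would create a $4$-cycle --- so it is attached at $v_5$; write it $[v_5ab]$ with $d(v_5)=d(a)=d(b)=3$. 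I would then split into subcases according to the degrees of $v_3$ and $v_4$ (and, if needed, the relative position of $v_5$'s triangle), exactly as Lemma \ref{lem_5vertex-2incident} and Case~2 of Lemma \ref{lem_4vertex-2incident} do.

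In each subcase I would delete $v$, the vertex $v_1$ together with its light neighbor $x'$, the triangle $\{v_5,a,b\}$, and the degree-$3$ member of $\{v_3,v_4\}$ together with its light neighbor $y'$, and then perform a single identification (or insert a single edge) among the resulting exposed vertices so that $G'$ stays connected and no short cycle is created --- the choice being the analogue of the identifications in Lemma \ref{lem_5vertex-2incident}. Checking Lemma \ref{pro_operation}$(a)$ is the usual argument: identifying, or inserting an edge between, two vertices of $D$ would place a splitting path of $D$ of length at most $5$ through $v$, whence by Lemma \ref{lem_splitting path} a cycle of length at most $9$ separating the triangle $f_1$ from the triangle $[v_5ab]$; such a cycle is either good, contradicting Lemma \ref{lem_sep good cycle}, or a bad $9$-cycle with a $(5,5,5)$-claw, which cannot enclose a $3$-face. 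For Lemma \ref{pro_operation}$(b)$, a newly created $6^-$-cycle or triangular $7$-cycle corresponds to a short cycle $C$ of $G$ through the chosen path, and $C$ separates two of the deleted $3$-faces; Remark \ref{rem_bad_cycle}(\ref{term_degree}) and (\ref{term_incident-edges}) then force a degree-$4^+$ vertex inside a bad cycle, or a bad cycle enclosing a $3$-face, neither of which occurs.

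Finally I would super-extend $\phi$ to $G'$ and re-color the deleted vertices ``outside in'': first $v$ (after the deletions it has exactly two surviving neighbors, hence is $3$-colorable), then the degree-$3$ vertices $v_1$ and the deleted member of $\{v_3,v_4\}$ (whose light outer neighbors are still uncolored, so they see at most two colors), then the triangle $a,b$, and last the light vertices $v_5,x',y'$, each of which is always $(1,0,0)$-colorable once its three neighbors have colors. The main obstacle, as in the earlier lemmas, will be the exceptional color patterns in which coloring a light vertex ($v_5$, $x'$, or $y'$) with color $1$ would give an already-colored neighbor a second neighbor of color $1$, or in which the triangle $[v_5ab]$ cannot be completed against a given coloring of its three outside neighbors; I expect to dispose of each such pattern by recoloring $v$ (or a degree-$3$ neighbor) with color $1$ and then reshuffling the colors of the adjacent degree-$3$ vertices, mirroring the closing arguments of Lemmas \ref{lem_4vertex-2incident} and \ref{lem_5vertex-2incident}. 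Everything geometric is mechanical from Lemmas \ref{lem_bad-cycle} and \ref{lem_splitting path} and Remark \ref{rem_bad_cycle}; pinning down the exact deletion and identification so that the cycle-length conditions and the color-extension order hold together across all subcases is where the real work lies.
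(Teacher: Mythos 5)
Your geometric setup is fine, but you have made the reduction both heavier and weaker than it needs to be. The paper's proof is a \emph{pure deletion} argument: it removes $v$, all three degree-$3$ vertices of the two incident triangles, the pendent $(3,3,3)$-face, and the two light outer neighbors, and performs no identification and no edge insertion at all. Consequently there is nothing to verify for Lemma \ref{pro_operation} (deleting vertices cannot make $D$ bad or create cycles), and all of the splitting-path and bad-cycle analysis you sketch is unnecessary. Your proposal, by contrast, hinges on ``a single identification (or insert a single edge) among the resulting exposed vertices'' whose endpoints you never name; since the entire geometric verification and the final coloring both depend on that choice, this is not a checkable step, and you concede as much in your last sentence.

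The more substantive gap is in the recoloring order. You keep $v_2$ colored from $G'$, color $v_1$ (which is then forced to the unique color off $\{c(v),c(v_2)\}$), and leave $x'$, $y'$ for last, asserting each ``is always $(1,0,0)$-colorable once its three neighbors have colors.'' That is false: two of the three neighbors of $x'$ retain their colors from the super-extension to $G'$, so if they carry colors $1$ and $2$, the $1$-colored one may already have a $1$-colored neighbor elsewhere in $G'$; if $v_1$ is then forced to color $3$, the vertex $x'$ can neither be $3$-colored nor assigned $1$ without creating a vertex of degree $2$ in the color-$1$ class. You flag this obstacle but only ``expect'' to recolor your way out, and with $v_2$ undeleted you have no slack at $v_1$ to do so. The paper avoids the problem by reversing the roles: the light outer neighbors are deleted and then \emph{properly} $3$-colored early (when they still have an uncolored neighbor), and the $(1,0,0)$-step is reserved for the two triangle vertices $v_3$ and $v_1$ (in the paper's labels), all of whose neighbors are freshly colored --- with $v$ first arranged, by a color swap with $v_4$, not to carry color $1$, since $v$ is the common neighbor of both vertices receiving the improper color. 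Without this ordering (or an equivalent device), your extension step does not go through.
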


\begin{proof}
	Denote by $v_1,\ldots,v_5$ the neighbors of $v$, whose order around $v$ has not been given yet.
	Suppose to the contrary that $v$ has a pendent $(3,3,3)$-face, say $[v_1w_1w_2]$.
	Let $[vv_2v_3]$ be a weak $(3,3,5)$-face with $v_3'$ being a light outer neighbor of $v_3$.
	Let $[vv_4v_5]$ be a weak $(3,5,5^+)$-face with $v_4'$ being a light outer neighbor of $v_4$.
    Delete $v,v_1,\ldots,v_4,w_1,w_2,v_3',v_4'$ from $G$, obtaining a
	graph $G'$. By the minimality of $G$, the pre-coloring $\phi$ of $D$ can be
	super-extended to $G'$, and further to $G$ in such way: 3-color $v_4',
	v_4$ and $v$ in turn. If $v$ has color 1, then exchange the colors of $v$ and $v_4$. Hence, w.l.o.g., we may assume that $v$ has color 2. 3-color $v_3',v_2,w_1,w_2$ in turn. Consequently, we can $(1,0,0)$-color $v_3$ and $v_1$.
\end{proof}

\begin{lemma} \label{lem_6vertiex}
	If $v$ is an internal 6-vertex of $G$ incident with two weak $(3,3,6)$-faces,
	then $v$ is incident with no other $(3,4^-,6)$-faces,
\end{lemma}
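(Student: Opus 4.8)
The plan is to argue by contradiction in the style of Lemmas~\ref{lem_2pendent}--\ref{lem_5vertex-2incident-1pendent}: assume $v$ is an internal $6$-vertex incident with two weak $(3,3,6)$-faces $f_1,f_2$ and with a third $(3,4^-,6)$-face $f_3$, delete a bounded neighbourhood of $v$ (together with the light outer neighbours witnessing the weakness of $f_1,f_2$), apply Lemma~\ref{pro_operation} to super-extend $\phi$ to the resulting smaller graph, and then colour the deleted vertices back. Write $v_1,\dots,v_6$ for the neighbours of $v$ in cyclic order; since $f_1,f_2,f_3$ are $3$-faces at $v$, each of them consists of $v$ and two cyclically consecutive $v_i$'s, so there are only a few possibilities for how $\{f_1,f_2,f_3\}$ sit around $v$ (pairwise non-consecutive, exactly two consecutive, or all three consecutive), and for $f_3$ we may in addition have to split on whether its distinguished vertex has degree $3$ or $4$. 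I would carry out the generic case $f_1=[vv_1v_2]$, $f_2=[vv_3v_4]$, $f_3=[vv_5v_6]$ in detail and then indicate the (minor) modifications in the remaining cases.

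For the reduction, let $x'$ be a light outer neighbour of $f_1$ and $y'$ a light outer neighbour of $f_2$, say $x'$ adjacent to $v_1$ and $y'$ adjacent to $v_3$. Delete $v$, all the degree-$3$ vertices of $f_1,f_2,f_3$ among $v_1,\dots,v_6$, and the vertices $x',y'$; if a degree-$4$ neighbour of $v$ survives (the distinguished vertex of $f_3$, when it has degree $4$), then I would additionally identify the two surviving neighbours of $v$, or insert an edge between them, exactly as in the proofs of Lemmas~\ref{lem_5vertex-2incident} and~\ref{lem_4vertex-2incident}, so as to keep control of the faces near $v$. Call the resulting smaller plane graph $G'$.

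Next I would verify the two hypotheses of Lemma~\ref{pro_operation}. For $(a)$: if the identification, or the inserted edge, joins two vertices of $D$, then a short path through $v$ — of length $2$, $3$ or $4$ — lies in a splitting path of $D$; by Lemma~\ref{lem_splitting path} this splitting path cuts off a cycle $C$ with $|C|\le 9$, and by construction $C$ separates two of the three triangular faces of $v$, hence $C$ has a triangle in its interior, contradicting Lemma~\ref{lem_sep good cycle} together with Lemma~\ref{lem_bad-cycle} (a bad $9^-$-cycle carries only a $(5,5,5)$-claw inside, and no triangle). For $(b)$: a newly created $6^-$-cycle or triangular $7$-cycle yields, back in $G$, a cycle of length at most $11$ running through the path at $v$ and enclosing one of $f_1,f_2,f_3$, or enclosing $v$ itself; this is impossible by Lemma~\ref{lem_bad-cycle} and Remark~\ref{rem_bad_cycle}(\ref{term_degree}), since a bad cycle encloses neither a triangle nor a $6$-vertex. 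Thus Lemma~\ref{pro_operation} applies and $\phi$ super-extends to $G'$.

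Finally I would push the colouring back to $G$. Colour $v$ first (its surviving neighbours are few, so at most two colours are forbidden at $v$); then colour the light outer neighbours $x',y'$ and the degree-$3$ vertices of $f_3$; and finally $(1,0,0)$-colour one degree-$3$ vertex of $f_1$ and one of $f_2$, using the basic fact that an internal $3$-vertex can always be $(1,0,0)$-coloured once its three neighbours are coloured (either some colour is missing at it, or all three colours occur and we may then take colour $1$). \textbf{The main obstacle} is exactly this last step: both weak faces hang off $v$, so the two vertices coloured last may both want colour $1$ while both being adjacent to $v$, which is forbidden once $v$ also carries colour $1$. To circumvent this I would colour $v$ with a colour $\neq 1$ whenever the colours already on its neighbours permit, and in the residual case — where the neighbours of $v$ force colour $1$ on $v$ — exploit the freedom in colouring $x'$, $y'$ and the $f_3$-vertices, together with a local recolouring of $v$ and of the degree-$3$ vertex of $f_3$ in the manner of the proof of Lemma~\ref{lem_4vertex-2incident}, so that at most one of the two last vertices receives colour $1$. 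The precise deletion set and the order of restoration vary slightly between the cyclic-position cases, but the argument in each is uniform.
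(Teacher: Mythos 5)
Your reduction skeleton (delete a neighbourhood of $v$, invoke Lemma \ref{pro_operation}, recolour) matches the paper's, and your verification of terms $(a)$ and $(b)$ is in the right spirit. But there is a genuine gap, and it sits exactly where you flag ``the main obstacle'': the recolouring step is not just delicate, it fails outright for the deletion set you propose, and the auxiliary identification you suggest is both ill-defined and the wrong one. After deleting $v$ and all degree-$3$ vertices of $f_1,f_2,f_3$, at most \emph{one} neighbour of $v$ survives (the possible $4$-vertex of $f_3$), so ``identify the two surviving neighbours of $v$'' has no meaning; and no identification among surviving neighbours of $v$ addresses the real difficulty. That difficulty is the third face $f_3$: it is a $(3,4^-,6)$-face that is \emph{not} assumed weak, so its outer neighbours are heavy or external and cannot be recoloured. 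When you colour its degree-$3$ vertex $v_5$ back, its three neighbours $v$, its triangle-partner, and its outer neighbour $v_5'$ may carry three distinct colours, forcing $v_5$ to take colour $1$ with its unique colour-$1$ neighbour being $v_5'$ (or the surviving $4$-vertex) --- a vertex of $G'$ whose colour-$1$ neighbourhood you do not control, so the colour-$1$ class may acquire a vertex of degree $2$. One checks (e.g.\ with $v_5'$ and the other outer neighbour both of colour $1$) that no choice of colour for $v$ and no reordering of $f_3$'s two $3$-vertices escapes this; keeping $f_3$'s vertices undeleted instead only pushes the conflict onto $v$, which may then be forced to colour $1$ while both weak faces want to place a colour-$1$ vertex next to it.

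The paper's resolution is a specific identification you do not have: it identifies $v_2'$, the outer neighbour of the degree-$3$ vertex $v_2$ of the \emph{non-weak} face $[vv_1v_2]$, with $v_5'$, the outer neighbour of a degree-$3$ vertex of one \emph{weak} face, forcing them to share a colour $\alpha$. The non-weak face is then coloured \emph{first} ($v_1$, $v_2$, then $v$, all plainly $3$-coloured, so $f_3$ never needs colour $1$), and the dichotomy on $\alpha$ does the rest: if $v$ gets colour $\alpha$, then $v_5$ sees only two colours among $v,v_5'$ and that weak face needs no colour $1$; otherwise $v_1=\alpha$, which is precisely what makes the swap of the colours of $v$ and $v_2$ legitimate and keeps $v$ off colour $1$, after which each weak face can safely deposit one colour-$1$ vertex next to $v$ using its freshly recoloured light outer neighbour for safety. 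Without this identification (or an equivalent device) the final step of your argument cannot be completed, so the proposal as written does not constitute a proof.
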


\begin{proof}
Denote by $v_1,\ldots,v_6$ the neighbors of $v$ locating around $v$ in clockwise order.
Let $[vv_3v_4]$ and $[vv_5v_6]$ be two weak $(3,3,6)$-faces. 
Suppose to the contrary that $[vv_1v_2]$ is a $(3,4^-,6)$-face.
W.l.o.g., let $d(v_2)=3$.
Denote by $v_i'$ the remaining neighbor of $v_i$ for $i\in\{2,\dots,6\}$. Since $[vv_3v_4]$ is weak, denote by $x'$ a light outer neighbor of $[vv_3v_4]$. Between $v_3$ and $v_4$, denote by $x$ the one adjacent to $x'$ and by $y$ the other.
Delete vertices $v,v_1,\ldots,v_6,x'$ from $G$ and identify $v_2'$ with $v_5'$, obtaining a new graph $G'$. 
We will show that both terms in Lemma \ref{pro_operation} do hold:

(Term $a$) Otherwise, the path $v_2'v_2vv_5v_5'$ is contained in a splitting 4- or 5-path of $D$. By Lemma \ref{lem_splitting path}, this splitting path divides $D$ into two parts, one of which is a $9^-$-cycle, say $C$. Now $C$ separates $v_4$ from $v_6$ and contains a triangle either $[vv_3v_4]$ or $[vv_5v_6]$ inside, a contradiction.

(Term $b$) If our operation creates a new 7$^-$-cycle, then $G$ has a $11^-$-cycle $C$ that contains the path $v_2'v_2vv_5v_5'$. Since $C$ separates $v_4$ from
$v_5$, $C$ is a bad cycle. Now $v$ is a vertex on $C$ which has two neighbors either $v_3,v_4$ or $v_1,v_6$ inside $C$, contradicting Remark \ref{rem_bad_cycle}(\ref{term_neighbor}).

By Lemma \ref{pro_operation}, $\phi$ can be super-extended to $G'$. We will further super-extend $\phi$ to $G$ in the following way.
Let $\alpha$ be the color $v_2'$ and $v_5'$ receive.
3-color $v_1,v_2$ and $v$ in turn. If $v$ has color $\alpha$, then we can 3-color $v_6$ and $v_5$ in turn and seperately, 3-color $x'$ and $y$ in turn and then (1,0,0)-color $x$, we are done. Hence, we may assume that the color of $v$ is not $\alpha$. Since the colors of $v,v_1$ and $v_2$ are pairwise distinct, $v_1$ has color $\alpha$. We may assume that the color of $v$ is not 1 since otherwise, we exchange the colors of $v$ and $v_2$.
3-color $x'$ and $y$ in turn and consequently, we can (1,0,0)-color $x$. Remove the color of an outer neighbor (say $z$) of $[vv_5v_6]$ and in the same way, we color $z,v_5,v_6$, as desired.
\end{proof}

Let $W$ be a subgraph of $G$ consisting of a $(4,4,4)$-face $[uvw]$ and three 3-faces
$[uu_1u_2],[vv_1v_2]$ and $[ww_1w_2]$ of $G$ that share precisely one vertex (respectively, $u,v$ and $w$) with $[uvw]$. Let $u,v,w$ as well as $u_1,u_2,v_1,v_2,w_1,w_2$ be in clockwise order around $[uvw]$. 
Call $W$ a \emph{wheel}, written as $(uvw,u_1u_2v_1v_2w_1w_2)^\mathcal{W}$, if $d(u_1)=d(v_1)=d(w_1)=3$ and $d(u_2)=d(v_2)=d(w_2)=4$.
Call $W$ an \emph{antiwheel}, written as $(uvw,u_1u_2v_1v_2w_1w_2)^\mathcal{AW}$, if $d(u_1)=d(v_1)=d(w_2)=3$ and $d(u_2)=d(v_2)=d(w_1)=4$.

\begin{lemma}\label{lem_wheel}
	$G$ has no wheels.
\end{lemma}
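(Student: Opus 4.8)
The plan is to suppose $G$ contains a wheel $(uvw,u_1u_2v_1v_2w_1w_2)^{\mathcal{W}}$ and to derive a contradiction through the standard reduction. First I would record the local data: $u,v,w$ are internal $4$-vertices, the neighbours of $u$ being $v,w,u_1,u_2$ (and cyclically for $v$ and $w$); the vertices $u_1,v_1,w_1$ are $3$-vertices; and $u_2,v_2,w_2$ are $4$-vertices. Write $u_1',v_1',w_1'$ for the third neighbours of $u_1,v_1,w_1$. The absence of $4$- and $6$-cycles keeps the ``frontier'' vertices $u_2,v_2,w_2,u_1',v_1',w_1'$ apart in the relevant places, and since $D$ is good and chordless one may assume every vertex of the wheel is internal. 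I would then build a smaller plane graph $G'$ by deleting $u,v,w,u_1,v_1,w_1$, combined with identifying an appropriately chosen pair of frontier vertices (or inserting one edge between two of them); which pair is used is dictated by what the recolouring step below needs, and checking that this choice is admissible is itself part of the argument.

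Next I would verify hypotheses $(a)$ and $(b)$ of Lemma~\ref{pro_operation} for this operation. For $(a)$: if it identified two vertices of $D$, or created a chord of $D$, then the arc of the wheel joining the two affected frontier vertices would lie on a splitting path of $D$ of length at most $5$; by Lemma~\ref{lem_splitting path} this splitting path cuts off a cycle $C$ with $|C|\le 9$. Since $C$ separates two of the pendent triangles of the wheel, $C$ is not good (Lemma~\ref{lem_sep good cycle}), hence $|C|=9$ and $C$ has a $(5,5,5)$-claw (Lemma~\ref{lem_bad-cycle}); but then $int(C)$ is a single $3$-vertex and cannot contain a pendent triangle, a contradiction. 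For $(b)$: a newly created $6^-$-cycle, or a triangular $7$-cycle, pulls back to a short path of $G$ between the two affected frontier vertices, which together with the short arc through the wheel gives a cycle $C$ of $G$ of length at most $11$; $C$ separates two pendent triangles, or encloses one of the $4$-vertices $u,v,w$, so $C$ is bad (Lemma~\ref{lem_sep good cycle}), whence $int(C)$ consists only of $3$-vertices and, by the structure of bad cycles, contains no $3$-face --- contradicting that a pendent triangle lies inside $C$. Hence, by the minimality of $G$, Lemma~\ref{pro_operation} super-extends $\phi$ to $G'$.

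It remains to extend the colouring across the deleted vertices. After the super-extension the only uncoloured vertices are $u,v,w,u_1,v_1,w_1$; the unique coloured neighbour of $u$ (resp.\ $v,w$) is $u_2$ (resp.\ $v_2,w_2$), while $u_1$ has exactly the coloured neighbours $u_2$ and $u_1'$ (and cyclically for $v_1,w_1$). I would first colour the central triangle $[uvw]$ compatibly with $\phi(u_2),\phi(v_2),\phi(w_2)$: if these three colours are not all equal, a short inclusion--exclusion over the six bijections $\{u,v,w\}\to\{1,2,3\}$ shows at least two give a proper $3$-colouring of the triangle with $c(u)\neq\phi(u_2)$, $c(v)\neq\phi(v_2)$, $c(w)\neq\phi(w_2)$; if they are all equal, then (by the choice of the identification) their common value lies in $\{2,3\}$, and I would colour two of $u,v,w$ with $1$ and the third with the remaining element of $\{2,3\}$, checking that the colour class of $1$ keeps maximum degree at most $1$. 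Then I would $(1,0,0)$-colour $u_1,v_1,w_1$ in turn; each is a $3$-vertex with at most three coloured neighbours, and the only obstruction would be its neighbours carrying all three colours with the colour-$1$ neighbour already saturated --- a situation that the choice of identification, together with the freedom in colouring the triangle (e.g.\ choosing $c(u)$ so that $\{c(u),\phi(u_2),\phi(u_1')\}$ is not rainbow), is arranged to rule out.

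The step I expect to be the main obstacle is exactly this recolouring, together with the choice of operation it forces: the central triangle is rigid (it essentially demands three distinct colours, or two copies of the colour $1$), so matching it to the already-fixed colours of $u_2,v_2,w_2$ and then completing at $u_1,v_1,w_1$ without ever letting the colour-$1$ class exceed maximum degree $1$ requires a genuine case analysis, and it is making this analysis succeed in every case that pins down which pair of frontier vertices one identifies (or which edge to insert) when forming $G'$ --- while at the same time one must confirm that this operation is admissible, i.e.\ creates no $6^-$-cycle or triangular $7$-cycle and does not touch $D$, which is a little delicate because the arms $u_2,v_2,w_2$ of the wheel are spread far apart.
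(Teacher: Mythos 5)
Your proposal sets up the right framework (delete part of the wheel, perform an auxiliary identification/insertion, verify the hypotheses of Lemma~\ref{pro_operation} via Lemmas~\ref{lem_splitting path}, \ref{lem_sep good cycle} and \ref{lem_bad-cycle}, then extend the colouring), but the two steps you yourself flag as ``the main obstacle'' --- which operation to perform and how to complete the recolouring --- are exactly where the content of the lemma lives, and they are left open. As written, this is a genuine gap, not a deferrable detail. A single identification or a single inserted edge between two of the frontier vertices controls the colours of only \emph{two} of $u_1',v_1',w_1'$, and the bad configuration you need to exclude (the central triangle forced to use three distinct colours, with each light vertex $u_1,v_1,w_1$ seeing all three colours among $\{c(u),\phi(u_2),\phi(u_1')\}$ and the colour-$1$ neighbour already saturated) can still occur at the third, uncontrolled vertex. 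The paper's resolution is different in two concrete ways: it deletes \emph{all nine} vertices of the wheel (including $u_2,v_2,w_2$, so that these $4$-vertices can be freely $3$-coloured at the end, each having only two precoloured neighbours left), and it inserts \emph{three} edges making $[u_1'v_1'w_1']$ a triangle. That triangle is the key device: in any $(1,0,0)$-colouring of $G'$ its three vertices are either rainbow, or exactly two of them get colour $1$ and then neither of those two has another neighbour coloured $1$; both alternatives admit an explicit completion of the colouring (this is the paper's Case~1 / Case~2 analysis). Note also that inserting three edges takes you outside the scope of Lemma~\ref{pro_operation}, which is why the paper falls back on the more general Lemma~\ref{lem_general-operation} and has to argue separately that $D$ stays good.

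Two smaller points. First, in your verification of term $(b)$ you assert that a bad cycle ``contains no $3$-face'' in its interior; that is not literally true (the $(3,7,7)$- and $(3,7,3,7)$-configurations have $3$-cells), and the clean argument, used in the paper, is that the cycle in question would have to contain one of the $4$-vertices $u_2,v_2,w_2,u,v,w$ strictly inside, contradicting Remark~\ref{rem_bad_cycle}(2). Second, your inclusion--exclusion count over colourings of the central triangle is only the easy half of the extension; the hard half is completing at $u_1,v_1,w_1$ without overloading the colour-$1$ class, and that is precisely what the inserted triangle on $u_1',v_1',w_1'$ is designed to make possible.
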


\begin{proof}
Suppose to the contrary that $G$ has a wheel, say $W=(uvw,u_1u_2v_1v_2w_1w_2)^\mathcal{W}$. 
Let $u_1',v_1'$ and $w_1'$ be the remaining neighbors of $u_1,v_1$ and $w_1$, respectively.
Delete all vertices of $W$ and insert three edges making $[u_1'v_1'w_1']$ a triangle. We thereby obtain a graph $G'$ smaller than $G$. We shall use Lemma \ref{lem_general-operation}.

Suppose that our operation connects two vertices of $D$. 
W.l.o.g., let $u_1'$ and $v_1'$ locate on $D$. 
Then as a splitting 5-path of $D$, $u_1'u_1uvv_1v_1'$ divides $D$ into two parts, one of which is a $9^-$-cycle. Now this cycle separates $u_2$ from $w$ and contains a triangle either $[uu_1u_2]$ or $[uvw]$ inside, a contradiction.
Hence, Term ($i$) holds true. 

Suppose that our operation creates a new $7^-$-cycle $C'$ other than $[u_1'v_1'w_1']$.
Since $C'$ is new, $C'$ must share edges with $[u_1'v_1'w_1']$. If they have precisely two common edges (w.l.o.g., say $u_1'v_1'$ and $v_1'w_1'$), then the cycle obtained from $C'$ by constituting the edge $u_1'w_1'$ for the path $u_1'v_1'w_1'$ is also created and has smaller length than $C'$.
Take this cycle as the choice for $C'$. Hence, we may assume that $C'$ and $[u_1'v_1'w_1']$ have one edge in common, say $u_1'v_1'$. 
So, $C'$ corresponds to a $11^-$-cycle $C$ of $G$ that contains the path $u_1'u_1uvv_1v_1'$. Since $C$ separates $u_2$ from
$w$, $C$ is a bad cycle containing either $u_2$ or $w$ inside, both of which have degree 4. This contradicts Remark \ref{rem_bad_cycle} (\ref{term_degree}). Therefore, our operation creates no $7^-$-cycles $C'$ other than $[u_1'v_1'w_1']$. In particular, Term ($ii$) holds true.

Suppose that our operation makes $D$ bad. So, $D$ has a bad partition $H$ in $G'$. If $H$ and $[u_1'v_1'w_1']$ have no edges in common, then $H$ is a bad partition of $D$ in $G$ as well, a contradiction. Hence, let $e$ be a common edge of $H$ and $[u_1'v_1'w_1']$. Recall that among the vertices of $[u_1'v_1'w_1']$, at most one lies on $D$. So, $e$ is not an edge of $D$. This implies that $e$ is incident with two cells of $H$, both of which are new. That is to say, we created a $7^-$-cycle other than $[u_1'v_1'w_1']$, a contradiction. Therefore, Term ($iii$) holds true.

By Lemma \ref{lem_general-operation}, $\phi$ can be super-extended to $G'$. We will further super-extend $\phi$ to $G$. Since $[u_1'v_1'w_1']$ is a triangle of $G'$, we distinguish two cases as follows.

Case 1: assume that the colors of $u_1',v_1'$ and $w_1'$ are pairwise distinct.
 W.l.o.g., let $\phi(u_1')=3,\phi(v_1')=2$ and $\phi(w_1')=1$.
3-color $u_2,v_2$ and $w_2$. If $\phi(u_2)\neq 3$ and $\phi(v_2)\neq 2$, then assign $u,v,w$ with colors $3,2,1$, respectively. Consequently, we can 3-color $u_1,v_1$ and $w_1$, we are done.
W.l.o.g., we may next assume that $\phi(u_2)=3$. Assign $u_1$ with color $2$ and $u$ with color $1$. Since $u$ and $v_1'$ have different colors, we can 3-color $v$ and $v_1$. If $w_2$ has color 1, then we can 3-color $w$ and $w_1$ in turn; otherwise, assign $w$ with the color 1 and then $3$-color $w_1$.

Case 2: assume that the colors of $u_1',v_1'$ and $w_1'$ are not pairwise distinct.
Since the extension of $\phi$ in $G'$ is a $(1,0,0)$-coloring, precisely two of $u_1',v_1'$ and $w_1'$ have the color $1$, say $u_1'$ and $v_1'$. 
3-color $u_2,v_2,w_2,w_1,w$ in turn. We may assume that the color of $w$ is not 1 since otherwise, we can exchange the colors of $w$ and $w_1$.
W.l.o.g., let $w$ be of color 3.
Since both $u_1'$ and $v_1'$ have color 1 that is different from the color of $w$,
regardless of the edge $uv$, we can 3-color $u,u_1$ and $v,v_1$. 
The resulting coloring gives a (1,0,0)-coloring of $G$ unless both $u$ and $v$ have color 2.
For this remaining case, we can deduce that $u_1$ has color $3$ and $u_2$ has color 1.
Reassign $u$ with the color 1, we are done.
\end{proof}

\begin{lemma}\label{lem_antiwheel}
	$G$ has no antiwheel whose outer neighbors are all light.
\end{lemma}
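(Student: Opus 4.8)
The plan is to imitate, almost verbatim, the reduction in the proof of Lemma~\ref{lem_wheel}. Suppose to the contrary that $G$ contains an antiwheel $(uvw,u_1u_2v_1v_2w_1w_2)^{\mathcal{AW}}$ all of whose outer neighbors are light, and let $u_1'$, $v_1'$, $w_2'$ be those outer neighbors, that is, the third neighbor of $u_1$, of $v_1$ and of $w_2$, respectively; by hypothesis each is an internal $3$-vertex. Since $[uvw]$ and the three pendent $3$-faces $[uu_1u_2]$, $[vv_1v_2]$, $[ww_1w_2]$ already account for all edges incident with $u$, $v$, $w$ and $G$ has no $4$-cycle, the neighborhood of $u$ (resp.\ $v$, $w$) is precisely $\{v,w,u_1,u_2\}$ (resp.\ $\{u,w,v_1,v_2\}$, $\{u,v,w_1,w_2\}$). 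I would then delete the nine vertices $u,v,w,u_1,u_2,v_1,v_2,w_1,w_2$ and add the three edges $u_1'v_1'$, $v_1'w_2'$, $w_2'u_1'$, obtaining a smaller graph $G'$. The degenerate situations in which two of $u_1',v_1',w_2'$ coincide or are already adjacent in $G$ are disposed of as for the wheel: one then finds a short separating cycle through two of $u,v,w$, impossible by Lemmas~\ref{lem_sep good cycle} and~\ref{lem_bad-cycle}.

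The verification that this operation meets the three hypotheses of Lemma~\ref{lem_general-operation} runs parallel to Lemma~\ref{lem_wheel}. For condition~$(i)$, if a new edge joined two vertices of $D$, say $u_1',v_1'\in V(D)$, then $u_1'u_1uvv_1v_1'$ would be a splitting $5$-path of $D$, and by Lemma~\ref{lem_splitting path}(4) one of the two resulting cycles has length at most $9$ and contains one of the $3$-faces $[uu_1u_2]$, $[uvw]$ in its interior; by Lemma~\ref{lem_bad-cycle} such a cycle is good, so it is a separating good cycle, contradicting Lemma~\ref{lem_sep good cycle}. For condition~$(ii)$, a new cycle of length at most $7$ other than $[u_1'v_1'w_2']$ may, after deleting a doubly used triangle edge, be assumed to meet the new triangle in a single edge, and so corresponds to a cycle $C$ of $G$ of length at most $11$ running through one of the $5$-paths $u_1'u_1uvv_1v_1'$, $v_1'v_1vww_2w_2'$, $w_2'w_2wuu_1u_1'$; by the embedding of the antiwheel such a $C$ is separating and has a vertex of degree $4$ (one of $u,v,w,u_2,v_2,w_1$) in its interior, contradicting Lemma~\ref{lem_sep good cycle} and Remark~\ref{rem_bad_cycle}(\ref{term_degree}). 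In particular no $k$-cycle with $k\in\{1,2,4,6\}$ is created. For condition~$(iii)$, a bad partition of $D$ in $G'$ must use a new edge $e$; since at most one of $u_1',v_1',w_2'$ lies on $D$, $e$ is not an edge of $D$, hence $e$ bounds two new cells, i.e.\ a $7^-$-cycle other than $[u_1'v_1'w_2']$ was created, contradicting~$(ii)$.

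By Lemma~\ref{lem_general-operation}, $\phi$ super-extends to $G'$; fix such a coloring. As $u_1'v_1'w_2'$ is now a triangle, its three colors are either pairwise distinct or exactly two of them equal $1$. The extension back to $G$ would go as follows: first $3$-color $u_2$, $v_2$, $w_1$ in turn (each has only its two off-face neighbors colored); then color the three degree-$3$ tips $u_1,v_1,w_2$ (each adjacent to one of $u,v,w$, one of $u_2,v_2,w_1$, one of $u_1',v_1',w_2'$); and finally the $(4,4,4)$-triangle $[uvw]$. As in Lemma~\ref{lem_wheel}, the tips should be colored before $u,v,w$, their colors being chosen — using the ordinary $(1,0,0)$-slack, and, if forced, giving a tip the color $1$ — so that the lists remaining for $u,v,w$ are not all equal singletons; the hypothesis that the outer neighbors are light (hence of degree $3$, with only two further neighbors each) is exactly what provides the room for these choices and for the two-subcase analysis, distinguishing pairwise distinct colors on $u_1',v_1',w_2'$ from two of them being colored $1$, that completes the coloring.

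\emph{The main obstacle} is this last step: completing the color at the $(4,4,4)$-triangle $[uvw]$ when the list of each of $u,v,w$ has already been shrunk by the fixed colors of its pendent vertex and of its outer neighbor. This requires the same careful case analysis on the color pattern of $u_1',v_1',w_2'$ and on which of $\phi(u_2),\phi(v_2),\phi(w_1)$ clash with it as in the proof of Lemma~\ref{lem_wheel}; the structural input — that no $4$- or $6$-cycle is created — is exactly the computation in condition~$(ii)$ above, which copies the wheel argument almost word for word.
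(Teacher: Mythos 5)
There is a genuine gap, and it is exactly the point where the antiwheel differs from the wheel. Your reduction inserts all three edges $u_1'v_1'$, $v_1'w_2'$, $w_2'u_1'$, and your verification of condition $(ii)$ claims that every new short cycle corresponds to a cycle $C$ of $G$ through one of the three $5$-paths that separates a $4$-vertex of the configuration. This is true for $u_1'u_1uvv_1v_1'$ and for $v_1'v_1vww_2w_2'$, but it fails for $w_2'w_2wuu_1u_1'$. In the antiwheel the degree-$3$ tips are $u_1,v_1,w_2$, and in the clockwise order $u,u_1,u_2,v,v_1,v_2,w,w_1,w_2$ the vertices $w_2$ and $u_1$ are consecutive around the configuration with only $u$ (which lies \emph{on} the path) between them. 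Consequently the path $w_2\,w\,u\,u_1$ hugs one side of the antiwheel: all of $w_1$, $v$, $v_1$, $v_2$, $u_2$ lie on the \emph{same} side of it, and the angular sectors on the other side at $w_2$, $w$, $u$, $u_1$ contain no further edges of the configuration. Hence a cycle of $G$ formed by this $5$-path together with a $3$-path (resp.\ $5$-path) from $u_1'$ to $w_2'$ on that empty side is an $8$-cycle (resp.\ $10$-cycle) that need not enclose any vertex of the configuration at all --- it may simply bound an $8$- or $10$-face of $G$, which no lemma forbids. Inserting the edge $u_1'w_2'$ would then create a genuine $4$- or $6$-cycle in $G'$, so condition $(ii)$ of Lemma~\ref{lem_general-operation} cannot be verified and the whole reduction collapses. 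This is precisely why the paper does \emph{not} imitate the wheel reduction: it instead keeps $v_2$, identifies $v_2$ with $w_2'$ along the path $v_2vww_2w_2'$ (which does cut through the middle of the configuration and strands $w_1$ or $u_2$ inside any short cycle), and inserts only the single edge $u_1'v_1'$.

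A second, independent deficiency is that the recoloring step is not actually carried out: you describe the intended order of coloring and assert that the lightness of the outer neighbors ``provides the room for these choices,'' but you yourself flag the completion at the $(4,4,4)$-triangle as ``the main obstacle'' without resolving it. In the paper this is where the hypothesis that $u_1'$, $v_1'$, $w_2'$ are light is genuinely used (they are $3$-recolored at several points), and the case analysis is not a verbatim copy of the wheel case because the paper's $G'$ contains the identified vertex $v_2=w_2'$ rather than a clean triangle on the three outer neighbors. As written, the proposal neither has a sound reduction nor a completed coloring argument.
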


\begin{proof}
	Suppose to the contrary that $G$ has such an antiwheel, say $W=(uvw,u_1u_2v_1v_2w_1w_2)^\mathcal{AW}$.
	Denote by $u_1',v_1'$ and $w_2'$ outer neighbors of $u_1,v_1$ and $w_2$, respectively.
	Delete all the vertices of $W$ except $v_2$, identify $v_2$ with $w_2'$, and insert an edge between $u_1'$ and $v_1'$, obtaining a new graph $G'$ from $G$. We shall use Lemma \ref{lem_general-operation}.
	
	Suppose that our operation identifies two vertices of
	$D$, or inserts an edge that connects two vertices of $D$. 
	So, $D$ has a splitting 4- or 5-path in $G$ containing either $v_2vww_2w_2'$ or $u_1'u_1uvv_1v_1'$. By Lemma \ref{lem_splitting path}, this splitting path divides $D$ into two parts, one of which is a $9^-$-cycle, say $C$. Now $C$ separates $u_2$ from $w_1$ and contains a triangle either $[uu_1u_2]$ or $[ww_1w_2]$ inside, a contradiction.
	Hence, Term ($i$) holds true. 
	
	Suppose that our operation creates a new $7^-$-cycle, say $C'$.
    $C'$ corresponds to a subgraph (say $P$) of $G$ that can be distinguished in four cases: (1) a $6^-$-path between $u_1'$ and $v_1'$; (2) a $7^-$-path between $w_2'$ and $v_2$; (3) the union of two vertex-disjoint paths, one between $u_1'$ and $w_2'$ and the other between $v_1'$ and $v_2$; (4) the union of two vertex-disjoint paths, one between $u_1'$ and $v_2$ and the other between $v_1'$ and $w_2'$. For the first case, $P$ and the path $u_1'u_1uvv_1v_1'$ together form a $11^-$-cycle which contains a 4-vertex either $u_2$ or $w_1$ inside, a contradiction to Remark \ref{rem_bad_cycle} (\ref{term_degree}). For the case (2), $P$ and the path $w_2'w_2wvv_2$ together form a $11^-$-cycle which contains a 4-vertex either $u_2$ or $w_1$ inside, again a contradiction to Remark \ref{rem_bad_cycle} (\ref{term_degree}).
	For the case (3), since $G$ has no $6^-$-cycles adjacent to a triangle, we can deduce that $G$ has no $4^-$-paths between $v_1'$ and $v_2$ by the existence of $[vv_1v_2]$ and no edges between $u_1'$ and $w_2'$ by the existence of $[uvw]$. 
	It follows that $P$ has length at least 8, a contradiction. Case (4) is impossible by the planarity of $G$.
	Therefore, our operation creates no $7^-$-cycles. In particular, Term ($ii$) holds true.

	Suppose that our operation makes $D$ bad. Let $H$ be a bad partition of $D$ in $G'$.
	Since both terms of Lemma \ref{pro_operation} holds, if $u_1'v_1'\notin E(H)$, then	
	the proof of Lemma \ref{pro_operation} shows that identifying $w_2'$ with $v_2$ can not make $D$ bad.
	So, $u_1'v_1'$ belongs to $H$. Since Term $(i)$ holds true, $u_1'v_1'$ is incident with two cells of $H$. Clearly, these two cells are created and at least one of them is a $7^-$-cycle, contradicting the conclusion above that our operation creates no $7^-$-cycles.
	Therefore, Term ($iii$) holds.
	
	By Lemma \ref{lem_general-operation}, $\phi$ can be super-extended to $G'$. 
	Denote by $\alpha$ the color $v_2$ and $w_2'$ receive and by $\beta$ the color $u_1'$ receives.
	3-color $u_2$ and $w_1$. We distinguish two cases according to the colors of $u_2$ and $w_1$.
	
	Case 1: suppose that not both $u_2$ and $w_1$ have color $\alpha$. So, we can 3-color $u,v$ and $w$. Since both $u_1'$ and $w_2'$ have degree 3,  we can 3-recolor them. Consequently, we can $(1,0,0)$-color $u_1$ and $w_2$. If not all the colors occur on the neighbors of $v_2$, then we can 3-recolor $v_2$ and eventually, 3-recolor $v_1'$ and $(1,0,0)$-color $v_1$ in turn, we are done. So, we may next assume that $v_2$ has all the colors around. It follows that $v_2$ is of color 1 and $v$ not. W.l.o.g., Let $v$ be of color 3. We may assume that $v_1'$ is of color 2 since otherwise, we can 3-color $v_1$. Since $G'$ has an edge between $u_1'$ and $v_1'$,  $\beta\neq 2$. Now we recolor some vertices as follows. Assign $v_1$ with 1, reassign $v_2$ with 3 and $v$ with 2, remove the colors of $u_1,u,w,w_2$, and give the color 1 back to $w_2'$ and $\beta$ back to $u_1'$. 
	 Since now $u_1'$ and $v$ have different colors, we can 3-color $u$ and $u_1$. 
	Clearly, $w_2'$ has no neighbors of color 2 since $v_2$ already has one. 
	If $w_1$ has color 2, then we can 3-color $w$ and (1,0,0)-color $w_2$ in turn;
	otherwise, assign $w_2$ with 2 and we can (1,0,0)-color $w$.

	Case 2: suppose that both $u_2$ and $w_1$ have color $\alpha$.
	If $\alpha=1$, then assign $u$ with $\alpha$ and we can 3-color $u_1,v_1,v,w,w_2$ in turn, we are done. W.l.o.g., we may next assume that $\alpha=2$. If $\beta \neq 3$, then we can 3-color $v_1,v,w,w_2$ in turn, assign $u$ with the color 1, and 3-color $u_1$ at last; otherwise, since $v_1'$ is of color different from $\beta$, we assign $u, w_2$ and $v_1$ with 3, and $u_1,w$ and $v$ with 1. We are done in both situations. 
\end{proof}

\begin{lemma}\label{lem_5-face-all-light}
	$G$ has no 5-faces whose vertices are all light.
\end{lemma}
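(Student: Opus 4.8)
The plan is to apply the same reduction strategy used for the earlier configurations. Suppose for a contradiction that $f=[v_1v_2v_3v_4v_5]$ is a $5$-face of $G$ all of whose vertices are light. Each $v_i$ has degree $3$ with two neighbors on $f$, hence a unique outer neighbor $v_i'$. First I would pin down the local picture. Since $G$ has no $4$- and no $6$-cycle, the vertices $v_1',\dots,v_5'$ are pairwise distinct, none of them lies on $f$, and no two of them are adjacent: an edge $v_i'v_{i+1}'$ closes a $4$-cycle with the edge $v_iv_{i+1}$, an edge $v_i'v_{i+2}'$ closes a $6$-cycle with the length-$3$ subpath of $f$ from $v_i$ to $v_{i+2}$, and coincidences are ruled out in the same way. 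Moreover, Lemma \ref{lem_degree3} applied to each $v_i$ (whose neighbors $v_{i-1},v_{i+1}$ are light) shows that every $v_i'$ is heavy, and the five outer neighbors cannot all lie on $D$, for otherwise $f$ together with the spokes $v_1v_1',\dots,v_5v_5'$ would be a pentagon-claw of $D$, contradicting that $D$ is good.

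Next I would delete $v_1,\dots,v_5$; as only internal vertices are removed, the resulting graph $G'$ lies in $\mathcal{G}$, is smaller than $G$, keeps $D$ as the boundary of its unbounded face, and $D$ stays good, so by the minimality of $G$ the coloring $\phi$ super-extends to a $(1,0,0)$-coloring $c'$ of $G'$. It remains to color $v_1,\dots,v_5$ so as to obtain a $(1,0,0)$-coloring of $G$ still super-extending $\phi$. Writing $a_i:=c'(v_i')$ and noting that each $v_i'$ is the unique already-colored neighbor of $v_i$ and is not adjacent to any other $v_j$, this is precisely the task of $(1,0,0)$-coloring the cycle $v_1v_2v_3v_4v_5$ subject to the constraints: $a_i$ is forbidden at $v_i$ whenever $a_i\ne 1$ or $v_i'\in V(D)$, and when $a_i=1$ the color $1$ is still available at $v_i$ unless $v_i'$ already has a color-$1$ neighbor in $G'$.

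This final coloring is the heart of the proof. Insisting on a proper $3$-coloring would amount to list-coloring $C_5$ from lists of size $2$, which can fail because $C_5$ is an odd cycle; this is exactly where $(1,0,0)$-colorability helps, since the color $1$ may be assigned to two adjacent vertices of $f$ and thereby break the parity obstruction. Running through the color patterns $(a_1,\dots,a_5)$, the extension succeeds in all cases except possibly the one in which every outer neighbor receives color $1$ and each already has a color-$1$ neighbor in $G'$; this is the step I expect to be the main obstacle. To handle it I would strengthen the reduction whenever at most three of the $v_i'$ lie on $D$ (so at least two are internal): before deleting $v_1,\dots,v_5$, insert a triangle on three outer neighbors at most one of which is on $D$, and verify — as in the proofs of Lemmas \ref{pro_operation} and \ref{lem_wheel} — that this creates no $4$- or $6$-cycle and keeps $D$ good. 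Since a triangle cannot be $(1,0,0)$-colored with all three of its vertices colored $1$, at least one of those outer neighbors is then kept off color $1$, which suffices to break the parity obstruction and complete the extension. The one remaining configuration, with four outer neighbors on $D$, I would dispose of directly: two consecutive such vertices $v_i',v_{i+1}'$ give a splitting $3$-path $v_i'v_iv_{i+1}v_{i+1}'$ of $D$, so Lemma \ref{lem_splitting path} yields a short cycle between the two sides of $D$, and combined with Lemmas \ref{lem_sep good cycle} and \ref{lem_bad-cycle} this leads to a contradiction.
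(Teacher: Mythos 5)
Your setup (the $v_i'$ are pairwise distinct, non-adjacent, all heavy, and not all on $D$), your identification of the parity obstruction on $C_5$, and your observation that the only troublesome colour pattern is ``every outer neighbour coloured $1$ with no colour-$1$ budget left or lying on $D$'' all match the paper. The genuine gap is in the operation you use to kill that pattern. Any triangle on three of the five outer neighbours must contain an edge $v_i'v_{i+1}'$ between outer neighbours of two \emph{adjacent} vertices of $f$ (every $3$-subset of the positions of a $5$-cycle contains two consecutive ones). Inserting $v_i'v_{i+1}'$ creates a $6$-cycle as soon as $G-V(f)$ contains a path of length $5$ from $v_i'$ to $v_{i+1}'$. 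In $G$ such a path closes an $8$-cycle through $v_i'v_iv_{i+1}v_{i+1}'$, which is good by Lemma \ref{lem_bad-cycle}, hence non-separating by Lemma \ref{lem_sep good cycle}, hence simply the face of $G$ on the other side of the edge $v_iv_{i+1}$ from $f$; i.e.\ the obstruction occurs whenever $f$ is adjacent to an $8$-face, and nothing in the structural lemmas excludes this. So your insertion can violate the ``no new $6$-cycle'' requirement and the reduction fails. The paper avoids this by first showing, via the splitting $4$-path $u_i'u_iu_{i+1}u_{i+2}u_{i+2}'$ (whose short side would be a $5$- or $7$-face forced to contain an edge inside), that three \emph{consecutive} outer neighbours, say $u_5',u_1',u_2'$, are internal, and then inserting the single edge $u_2'u_5'$ between outer neighbours at distance two on $f$; a short cycle created by that edge lifts to a $9^-$-cycle of $G$ through a $4$-path, which the bad-cycle machinery does exclude. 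That one edge already forces $u_2'$ and $u_5'$ either to receive distinct colours or to both receive colour $1$ with their budgets empty in $G$, which is enough to complete the extension.

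Two further problems. First, your disposal of the ``four outer neighbours on $D$'' case does not work as stated: a splitting $3$-path $v_i'v_iv_{i+1}v_{i+1}'$ whose short side is a $5$-cycle yields no contradiction, since that $5$-cycle can simply be a $5$-face carrying a $2$-vertex of $D$. You need the distance-two splitting $4$-path argument above, which the paper uses for exactly this purpose and which in fact makes the whole case split unnecessary, since it directly produces three consecutive internal outer neighbours. Second, even where your insertions create no forbidden cycles, ``$D$ stays good'' is not automatic; for the single inserted edge this is the most delicate verification in the paper's proof (ruling out a newly created $(5,5,5,5)$-edge-claw, equivalently a $(5,5,7)$-claw of $D$), and asserting it ``as in Lemmas \ref{pro_operation} and \ref{lem_wheel}'' is not a proof, especially for three new edges at once.
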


\begin{proof}
Suppose $G$ has such a 5-face, say $f=[u_1u_2\ldots u_5]$.
For $i\in\{1,2,\ldots,5\}$, let $u_i'$ denote the remaining neighbor of $u_i$.
If both $u_1'$ and $u_3'$ belong to $D$, then as being a splitting 4-path of $D$, $u_1'u_1u_2u_3u_3'$ divides $D$ into two parts, one of which is a 5- or 7-cycle. This cycle is actually a face but now contains an edge either $u_2u_2'$ or $u_3u_4$ inside, a contradiction.
Therefore, at least one of $u_1'$ and $u_3'$ is internal.
For the same reason, this is even true for $u_i$ and $u_{i+2}$ for each $i\in \{1,2,\ldots,5\}$, where the index is added in modulo 5.
Hence, we can alway get three internal vertices $u_i'$, $u_{i+1}'$ and $u_{i+2}'$ for some $i\in \{1,2,\ldots,5\}$.
W.l.o.g., let $u_5'$, $u_1'$ and $u_2'$ be internal.
Remove all the vertices of $f$ from $G$ and insert an edge between $u_2'$ and $u_5'$, obtaining a new graph $G'$.
We shall use Lemma \ref{lem_general-operation}.
Clearly, Term $(i)$ holds true.

Suppose the graph operation creates a $k$-cycle with $k\in \{1,2,4,6\}$. So, $G$ has a $k$-path between $u_2'$ and $u_5'$. This path together with $u_5'u_5u_1u_2u_2'$ form a $(k+3)$-cycle, say $C$. 
By Lemma \ref{lem_degree3}, $d(u_1')\geq 4$. So, $C$ can not contain $u_1'$ inside since otherwise, a contradiction to Remark \ref{rem_bad_cycle} (\ref{term_degree}). Moreover, as a $9^-$-cycle, $C$ can not contain both $u_3$ and $u_4$ inside. Therefore, by planarity of $G$, $u_1'$ must locate on $C$.
Now the cycle, obtained from $C$ by constituting $u_2u_3u_4u_5$ for $u_2u_1u_5$, is a bad 10-cycle but it has a claw and a 5-cell, which is impossible. Therefore, Term $(ii)$ holds true.

Suppose that our operation makes $D$ bad. Let $H$ be a bad partition of $D$ in $G'$. So, $u_2'u_5'$ belongs to $H-E(D)$  since otherwise, $H$ is a bad partition of $D$ in $G$.
Now, $u_2'u_5'$ is incident with two cells of $H$, say $h'$ and $h''$.
Denote by $C'$ and $C''$ cycles obtained from $h'$ and $h''$ by constituting the edge $u_2'u_5'$ for the path $u_2'u_2u_1u_5u_5'$. 
Clearly, one of $C'$ and $C''$ (w.l.o.g., say $C'$) contains $u_1'$ inside or on $C$, and the other contains $u_3'$ and $u_4'$ inside. Since a cell has length at most 8, both $C'$ and $C''$ have length at most 11.
So, $C''$ is a bad cycle. Lemma \ref{lem_degree3} implies that both $u_3'$ and $u_4'$ are not light. So, $C''$ can not contains them inside by Remark \ref{rem_bad_cycle}(\ref{term_degree}). Instead, $u_3'$ and $u_4'$ are on $C''$. 
Now $C''$ has an edge-claw, more precisely, an (5,5,5,5)-edge-claw. So, $h''$ is a non-triangular 7-cell of $H$, which implies that $H$ must have a $(5,5,7)$-claw in $G'$. 
This gives a contradiction since both $u_2'$ and $u_5'$ are internal vertices on $H$.
Therefore, Term ($iii$) holds true.

By Lemma \ref{lem_general-operation}, $\phi$ can be super-extended to $G'$ and further to $G$ as follows.
If there is a vertex from $\{u_2',u_5'\}$ of color different from 1, w.l.o.g., say $u_5'$, then 3-color $u_1,\dots,u_4$ in turn and finally, we can $(1,0,0)$-color $u_5$.
So we may assume that both $u_2'$ and $u_5'$ are of color 1. Again, 3-color $u_1,\dots,u_4$ in turn. Since $u_2'$ has no neighbors of color 1 in $G$, we can $(1,0,0)$-color $u_5$. 
\end{proof}

\begin{lemma}\label{lem_5-face-4-light}
	$G$ has no 5-faces, four of whose vertices are light and the remaining one is an internal 4-vertex.  
\end{lemma}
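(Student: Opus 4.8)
The plan follows the template of the earlier reducibility lemmas, most closely Lemma~\ref{lem_5-face-all-light}: delete the offending configuration from $G$, perform a small surgery, invoke the minimality of $G$, and extend the resulting coloring back to $G$. So suppose $G$ has such a $5$-face $f=[u_1u_2u_3u_4u_5]$ with $d(u_1)=4$ and $u_2,u_3,u_4,u_5$ light; for $i\in\{2,3,4,5\}$ let $u_i'$ be the neighbor of $u_i$ off $f$, and let $u_1',u_1''$ be the two neighbors of $u_1$ off $f$. Two structural facts drive the argument. First, since $u_3$ and $u_4$ are light with both their $f$-neighbors light, Lemma~\ref{lem_degree3} forces $u_3'$ and $u_4'$ to be external or heavy. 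Second, if two light vertices of $f$ at $f$-distance $2$ have external outer neighbors, then the $4$-path joining those outer neighbors through the two light vertices and the common vertex $w$ between them is a splitting path of $D$; by Lemma~\ref{lem_splitting path} it divides $D$ into two cycles one of which, $C$, has length $5$ or $7$ and hence is good; because $C$ runs through $w$ via the two $f$-edges at $w$, if $C$ had empty interior it would bound $f$, which is impossible since $C$ meets $D$ but $f$ does not, so $C$ has non-empty interior and is a separating good cycle, contradicting Lemma~\ref{lem_sep good cycle}. (The coincidences $u_i'=u_j'$ for such pairs are excluded since they would create a $4$-cycle.) Hence at most two of $u_2',u_3',u_4',u_5'$ are external and they occupy consecutive positions along $f$; inspecting the few possibilities, I can always choose two internal vertices $a,b$ among $\{u_2',u_3',u_4',u_5'\}$ that are outer neighbors of vertices of $f$ at $f$-distance at most $2$, with at least one of $a,b$ the outer neighbor of a neighbor of $u_1$ on $f$, and taking $\{a,b\}=\{u_2',u_5'\}$ whenever possible.

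The surgery is to delete $V(f)$ and insert the edge $ab$, producing a smaller plane graph $G'$, to which I apply Lemma~\ref{lem_general-operation}. Term~(i) is immediate because $a,b$ are internal. For Term~(ii): a new cycle of length in $\{1,2,4,6\}$ would, after replacing $ab$ by the corresponding subpath of $f$ (of length $\le 4$), yield a cycle $C$ in $G$ of length $\le 10$ through that subpath; $C$ separates $u_1$ or one of the heavy vertices $u_3',u_4'$ from the remaining vertices of $f$, so if $C$ is good this contradicts Lemma~\ref{lem_sep good cycle}, and if $C$ is bad then, by Remark~\ref{rem_bad_cycle}(\ref{term_degree}) and the description of bad cycles in Lemma~\ref{lem_bad-cycle}, $C$ would be forced either to contain in its interior a vertex of degree $\ge 4$ (namely $u_1$, $u_3'$ or $u_4'$), or to carry a forbidden chord, or to violate Remark~\ref{rem_bad_cycle}(\ref{term_neighbor})--(\ref{term_consecutive-four-vertices}); the subcases where $u_1'$ or $u_1''$ lies on $C$ produce a chord of $C$ and are handled the same way. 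Term~(iii), that $D$ stays good, is verified as in the proof of Lemma~\ref{pro_operation}: were $D$ bad in $G'$ with bad partition $H$, then since $ab$ is the only new edge, $H$ must use $ab$ off $D$, so $ab$ is incident with two cells of $H$ that the surgery created, forcing a new short cycle already excluded in the analysis of Term~(ii).

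By the minimality of $G$, $\phi$ super-extends to $G'$. To push the coloring back to $G$: only the two outer neighbors of $u_1$ are colored at the start, so $3$-color $u_1$; then $3$-color the remaining light vertices of $f$ in the cyclic order around $f$ that ends at the light neighbor $z$ of $u_1$ whose outer neighbor is $a$, noting that each vertex before $z$ meets at most two colored neighbors when its turn comes. Finally $z$ has exactly three colored neighbors: its two $f$-neighbors and $a$. If they use at most two colors, $3$-color $z$; otherwise they use all three, exactly one of them is colored $1$, and we give $z$ the color $1$. This keeps color class $1$ of maximum degree $1$: if the color-$1$ neighbor of $z$ is an $f$-neighbor of $z$, it was $3$-colored and hence has no other color-$1$ neighbor already; and if it is $a$, then because $ab\in E(G')$ and the super-extension is a $(1,0,0)$-coloring, the unique color-$1$ neighbor of $a$ in $G'$ is $b$, so in $G$ the vertex $a$ has no color-$1$ neighbor other than $z$. (When $\{a,b\}=\{u_2',u_5'\}$ I additionally use the symmetry of the two coloring directions around $f$, ending at $u_2$ or at $u_5$, exactly as in the proof of Lemma~\ref{lem_5-face-all-light}.) This coloring contradicts the choice of $G$, completing the proof.

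The real work, I expect, is the verification of Term~(ii): cataloguing how a short cycle created by the surgery can sit relative to $f$, to $D$, and to $u_1',u_1''$, and ruling out every configuration via Lemmas~\ref{lem_sep good cycle}, \ref{lem_degree3}, \ref{lem_light_triangle} and Remark~\ref{rem_bad_cycle} --- in particular the potential bad-$10$-cycle cases, which may force a different surgery for some positions of the external outer neighbors. The secondary difficulty is matching the surgery, and hence the coloring order, to each possibility for the external set among $u_2',\dots,u_5'$ so that the final $(1,0,0)$-extension never overloads color class $1$.
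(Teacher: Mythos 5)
Your overall skeleton is the right one and matches the paper: delete $V(f)$, join two outer neighbors of $f$ by a new edge, invoke minimality, and recolor around $f$ ending at a vertex whose outer neighbor is an endpoint of the new edge (the color-$1$ bookkeeping you describe is exactly the argument of Lemma~\ref{lem_5-face-all-light}). But there is a genuine gap, and it comes from a misreading of the hypotheses of Lemmas~\ref{lem_general-operation} and~\ref{pro_operation}: these only forbid inserting an edge \emph{between two vertices of $D$}, so there is no need for both endpoints of the new edge to be internal. The paper therefore always inserts $u_2'u_5'$ (the outer neighbors of the two $f$-neighbors of the $4$-vertex $u_1$) and disposes of the case ``both $u_2',u_5'\in V(D)$'' exactly as in your structural fact~2, via Lemma~\ref{lem_splitting path}. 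By insisting that $a,b$ be internal you are forced into alternative pairs, and for those pairs the reducibility verification genuinely breaks. Concretely, when $u_2'$ and $u_3'$ are both external you take $\{a,b\}=\{u_4',u_5'\}$, a pair at $f$-distance~$1$; the face of $G$ on the other side of the edge $u_4u_5$ contains the path $u_4'u_4u_5u_5'$ (as $u_4,u_5$ are light), and if that face is an $8$-face, inserting $u_4'u_5'$ creates a $6$-cycle in $G'$. No contradiction is available there: the $8$-cycle is good and facial, so neither Lemma~\ref{lem_sep good cycle} nor Remark~\ref{rem_bad_cycle} applies, and your Term~(ii) fails. Your own closing paragraph concedes that this cataloguing is ``the real work'' and may ``force a different surgery,'' which is precisely the part that is missing.

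A second, related weakness is that your Term~(ii) argument for the intended pair is only a list of things that ``would be forced,'' not a proof. The paper's actual argument is tied to the specific choice $\{u_2',u_5'\}$: a created $6^-$-cycle or triangular $7$-cycle lifts to a $9^-$-cycle or triangular $10$-cycle $C$ through the path $u_2'u_2u_1u_5u_5'$; by planarity either both edges $u_1u_1',u_1u_1''$ lie inside $C$ (impossible for a bad $10^-$-cycle by Remark~\ref{rem_bad_cycle}(\ref{term_incident-edges}), and then handled by forcing $u_1',u_1''$ onto $C$), or $u_3,u_4$ lie inside $C$ (then $u_3',u_4'$ are heavy by Lemma~\ref{lem_degree3}, hence on $C$ by Remark~\ref{rem_bad_cycle}(\ref{term_degree}), forcing a $(5,5,5,5)$-edge-claw and a contradiction). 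Both prongs use that $u_1$ is the degree-$4$ vertex sitting on the lifted cycle, which is exactly what your alternative pairs throw away. To repair your write-up, drop the internality requirement, fix $\{a,b\}=\{u_2',u_5'\}$ unconditionally, use Lemma~\ref{pro_operation} rather than Lemma~\ref{lem_general-operation}, and carry out the two-pronged Term~($b$) analysis above.
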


\begin{proof}
	Suppose to the contrary the $G$ has such a 5-face $[u_1\ldots u_5]$.
	W.l.o.g., let $u_1$ be of degree 4. Denote by $u_1'$ and $u_1''$ the remaining neighbor of $u_1$ and for $i\in\{2,\dots,5\}$, denote by $u_i'$ the remaining neighbor of $u_i$.
	Remove all the vertices of $[u_1\ldots u_5]$ and insert an edge between $u_2'$ and $u_5'$, obtaining a new graph $G'$.
	We will show that both terms in Lemma \ref{pro_operation} do hold:
	
	(Term $a$) Otherwise, both $u_2'$ and $u_5'$ belong to $D$. 
	So, $u_2'u_2u_1u_5u_5'$ is a splitting 4-path of $D$, which divides $D$ into two parts so that one part is a 5- or 7-cycle $C$, by Lemma \ref{lem_splitting path}.  
	Notice that $C$ is actually a face but now has to contain an edge either $u_1u_1'$ or $u_2u_3$ inside, a contradiction.
	
	(Term $b$) Otherwise, $G$ has a $9^-$-cycle or a triangular 10-cycle $C$ containing the path $u_2'u_2u_1u_5u_5'$. By the planarity of $G$, either $C$ contains the edges $u_1u_1'$ and $u_1u_1''$ inside or $C$ contains the vertices $u_3$ and $u_4$ inside.
	For the former case, since $C$ has length at most 10, Remark \ref{rem_bad_cycle}(\ref{term_incident-edges}) implies that $C$ is not a bad cycle. So, $u_1'$ and $u_1''$ locate on $C$, yields the length of $C$ at least 11, a contradiction.
	For the latter case, by Lemma \ref{lem_degree3}, neither $u_3'$ nor $u_4'$ is light. So, they both locate on $C''$, implied by Remark \ref{rem_bad_cycle}(\ref{term_degree}). Now $C$ has a $(5,5,5,5)$-edge-claw, which gives a new triangular 7-cycle in $G'$, a contradiction.
		
	By Lemma \ref{pro_operation}, $\phi$ can be super-extended to $G'$ and further to $G$ in the same way as in the proof of Lemma \ref{lem_5-face-all-light}.
\end{proof}

\begin{lemma}\label{lem_two-5-faces-adjacent}
	$G$ has no two 5-faces $f$ and $g$ sharing precisely one edge, say $uv$, such that $u$ is an internal 5-vertex and all other vertices on $f$ or $g$ are light.
\end{lemma}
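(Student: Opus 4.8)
The plan is to follow the standard reducibility scheme: fix the configuration, delete most of it to obtain a smaller graph $G'$, super-extend $\phi$ to $G'$ by minimality, and then $(1,0,0)$-color the deleted vertices back one at a time, the whole difficulty being concentrated in the last vertex.

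First I would set up notation. Write $f=[uvp_1p_2p_3]$ and $g=[uvq_1q_2q_3]$ for the two $5$-faces, lying on the two sides of $uv$. Since $G$ is $2$-connected these are genuine $5$-cycles, and since $f$ and $g$ share precisely the edge $uv$ while every vertex of $(V(f)\cup V(g))\setminus\{u\}$ is light, a common vertex $w\notin\{u,v\}$ of $f$ and $g$ would (its three edges being split between $\partial f$ and $\partial g$) force a second common edge; hence $v,p_1,p_2,p_3,q_1,q_2,q_3$ are seven distinct internal $3$-vertices, $v$ has neighborhood $\{u,p_1,q_1\}$, and I write $p_i'$, $q_j'$ for the third neighbor of $p_i$, $q_j$, and $u_1,u_2$ for the two neighbors of $u$ other than $v,p_3,q_3$. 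By Lemma~\ref{lem_degree3} each of $p_1',p_2',p_3',q_1',q_2',q_3'$ is heavy; moreover any coincidence among these outer neighbors and $\{v,p_1,p_2,p_3,q_1,q_2,q_3,u\}$ forces a $4$-cycle or a $6$-cycle in $G$ (for instance $p_3'=q_3$ gives the $6$-cycle $p_3q_3uvp_1p_2$), except $p_1'=q_1$, which would make $p_1$ a light vertex with three light neighbors and so contradicts Lemma~\ref{lem_degree3}. Thus $p_1',p_2',p_3',q_1',q_2',q_3',u_1,u_2$ all lie outside $\{v,p_1,p_2,p_3,q_1,q_2,q_3\}$.

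Then I would take $G'=G-\{v,p_1,p_2,p_3,q_1,q_2,q_3\}$, keeping $u$ (now a $2$-vertex). This graph is connected, smaller than $G$, lies in $\mathcal{G}$, and $D$ stays good in it (a bad partition of $D$ in $G'$ would already be one in $G$), so by the minimality of $G$ the coloring $\phi$ super-extends to $G'$; call the result $c$. The key preprocessing move: if $c(u)=1$ and exactly one of $u_1,u_2$ has color $1$, recolor $u$ with the color of $\{2,3\}$ used by neither $u_1$ nor $u_2$; this is still a super-extension of $\phi$ to $G'$ (it only removes a color-$1$ neighbor from one vertex, and $u$ itself becomes properly colored). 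After this, either $c(u)\ne 1$, or $c(u)=1$ and neither $u_1$ nor $u_2$ has color $1$. Now extend $c$ to $G$ by $3$-coloring $p_3,p_2,p_1,q_3,q_2,q_1$ in this order — each has at most two already-colored neighbors when reached — and finally $v$. Observe that $c(p_3)\ne c(u)$, $c(q_3)\ne c(u)$, and that if $c(p_1)=1$ then $c(p_2),c(p_1')\ne 1$ (and symmetrically for $q_1$), since color $1$ was available at $p_1$. For $v$, with neighborhood $\{u,p_1,q_1\}$: if these three colors are not all of $\{1,2,3\}$, $3$-color $v$; otherwise exactly one neighbor of $v$ has color $1$, and I assign color $1$ to $v$. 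This is legitimate in each subcase: if $c(p_1)=1$, then $v$'s other neighbors $u,q_1$ and $p_1$'s other neighbors $p_2,p_1'$ are all non-$1$; the subcase $c(q_1)=1$ is symmetric; and if $c(u)=1$, then by the preprocessing step together with $c(p_3),c(q_3)\ne 1$, the only color-$1$ neighbor of $u$ is $v$. In every case color class $1$ keeps maximum degree $1$, so we obtain a super-extension of $\phi$ to $G$, contradicting the choice of $G$.

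The step I expect to be the real obstacle is coloring the last vertex $v$: its three neighbors may end up carrying all three colors, forcing $v$ to take color $1$, and the only danger is that its color-$1$ neighbor is $u$ while $u$ already has another color-$1$ neighbor among $u_1,u_2$. The construction is arranged precisely to defuse this — $u$ is kept inside $G'$, where it has degree $2$, so the preprocessing recoloring can remove this conflict before the final extension. Everything else (that $G'$ is connected, that $D$ remains good, that no outer neighbor collapses onto a deleted vertex, and the greedy bounds) is routine, as in the earlier reductions.
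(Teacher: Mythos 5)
Your proposal is correct and follows essentially the same route as the paper: delete the configuration, super-extend $\phi$ to the smaller graph by minimality, then greedily 3-color the light vertices back in, saving $v$ for last and $(1,0,0)$-coloring it, with the only danger (the color-$1$ neighbor of $v$ being $u$ while $u$ already sees color $1$) defused through the color of $u$. The sole cosmetic difference is that you keep $u$ in $G'$ as a $2$-vertex and recolor it if necessary, whereas the paper deletes $u$ as well and re-3-colors it first (which forces $c(u)\notin\{c(u_1),c(u_2)\}$ and achieves the same effect); your write-up also supplies the distinctness/coincidence checks that the paper leaves implicit.
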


\begin{proof}
	Suppose to the contrary that such $f$ and $g$ exist. By the minimality of $G$, we can super-extend $\phi$ to $G-V(f)\cup V(g)$ and further to $G$ as follows: 3-color the vertices of $f$ and $g$ except $v$ beginning with $u$ along seperately the boundary of $f$ and one of $g$. Eventually, we can (1,0,0)-color $v$.
\end{proof}

\subsection{Discharging in $G$}\label{secch}
Let $u$ be a vertex of a $(4,4,4)$-face. $u$ is \emph{abnormal} if it is incident with a $(3,4,4)$-face; otherwise, $u$ is \emph{normal}. A 5-face is \emph{small} if it contains precisely four light vertices.
Let $P$ be the common part of $D$ and a face $f$. $f$ is \emph{sticking} if $P$ is a vertex, {$i$-ceiling} if $P$ is a path of length $i$ for $i\geq 1.$

Let $V, E$ and $F$ be the set of vertices, edges and faces of $G$, respectively.
Denote by $f_0$ the exterior face of $G$.
Give \emph{initial charge} $ch(x)$ to each element $x$ of $V\cup F$ defined as $ch(f_0)=d(f_0)+24$, $ch(x)=5d(x)-14$ for $x\in V$, and $ch(x)=2d(x)-14$ for $x\in F\setminus \{f_0\}$.
Move charges among elements of $V\cup F$ based on the following rules (called discharging rules):
\begin{enumerate}[$R1.$]
  \setlength{\itemsep}{0pt}
  \item Every internal 3-vertex sends to each incident face $f$  charge 1 if $d(f)=3$, and charge $\frac{1}{3}$ otherwise. \label{term_rule_3-vertex}
  \item Every internal 4-vertex sends to each incident 3-face $f$ charge $\frac{7}{2}$ if $f$ is a $(3,4,4)$-face, charge $3$ if $f$ is a $(3,3,4)$-face, charge $\frac{8}{3}$ if $f$ is a $(4,4,4)$-face, charge $\frac{5}{2}$ otherwise. \label{term_rule_4-vertex_incident}
  \item Every internal $5$-vertex sends to each incident 3-face $f$ charge $6$ if $f$ is weak $(3,3,5)$-face, charge $\frac{9}{2}$ if $f$ is $(3,4,5)$-face, charge $\frac{7}{2}$ if $f$ is either a weak $(3,5,5)$-face or a strong $(3,3,5)$-face, charge $3$ otherwise. \label{term_rule_5^+-vertex_incident}
  \item Every internal $6$-vertex sends to each incident 3-face $f$ charge $6$ if $f$ is weak $(3,3,6)$-face, charge $5$ if $f$ is $(3,4,6)$-face, charge $4$ otherwise. \label{term_rule_6-vertex_incident}
  \item Every internal $7^+$-vertex sends to each incident 3-face charge $6$. \label{term_rule_7-vertex_incident}
  \item Every internal $4^+$-vertex sends to each pendent 3-face $f$ charge $\frac{5}{3}$ if $f$ is $(3,3,3)$-face, charge $\frac{3}{2}$ if $f$ is a $(3,3,4)$-face, and charge $\frac{5}{4}$ otherwise.\label{term_rule_heavy_pendent}
  \item Every internal $4^+$-vertex $u$ sends to each incident 5-face $f$ charge $\frac{8}{3}$ if $d(u)\geq 5$ and $f$ is small, and charge $\frac{3}{2}$ otherwise.\label{term_rule_5-face}
  \item Within a $(4,4,4)$-face, every normal vertex send to each abnormal vertex charge $\frac{1}{6}$. \label{term_rule_normal-2-abnormal}
  \item Within an antiwheel, every strong $(3,4,4)$-face sends to each vertex of the $(4,4,4)$-face charge $\frac{1}{6}$. \label{term_rule_face-2-abnormal}
  \item The exterior face $f_0$ sends charge $3$ to each incident vertex.\label{term_rule_exterior_face}
  \item Every 2-vertex receives charge 1 from its incident face other than $f_0$. \label{term_rule_2-vertex}
  \item Every exterior $3^+$-vertex sends to each sticking 3-face charge $6$, to each ceiling 3-face charge $\frac{7}{2}$, to each sticking 5-face charge $\frac{8}{3}$, to each 2-ceiling 5-face charge $\frac{13}{6}$, to each pendent 3-faces charge $\frac{5}{3}$, to each 1-ceiling 5-face charge $\frac{3}{2}$, to each 3-ceiling 7-face charge 1, to each 2-ceiling 7-face charge $\frac{1}{2}$. \label{term_rule_exterior-3plus-vertex}

\end{enumerate}

Let $ch^*(x)$ denote the \emph{final charge} of an element $x$ of $V\cup F$ after discharging.
On one hand, from Euler's formula $|V|+|E|-|F|=2$, we deduce $\sum\limits_{x\in V\cup F}ch(x)=0.$
Since the sum of charges over all elements of $V\cup F$ is unchanged during the discharging precedure, it follows that $\sum\limits_{x\in V\cup F}ch^*(x)=0.$ On the other hand, we will show that $ch^*(x)\geq 0$ for $x\in V\cup F\setminus \{f_0\}$ and  $ch^*(f_0)> 0$. So, this obvious contradiction completes the proof of Theorem \ref{thm_main_extension}.

\begin{claim}
	$ch^*(f_0)>0$.
\end{claim}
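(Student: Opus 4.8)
The plan is to track precisely which discharging rules move charge into or out of $f_0$. Scanning the rules $R1$–$R12$, I would first observe that $f_0$ never \emph{receives} charge: rules $R1$–$R9$ only redistribute charge among internal vertices and bounded faces; rule $R11$ explicitly excludes $f_0$ from sending charge to $2$-vertices (so no charge comes back to $f_0$ there); and in $R12$ the recipients are bounded $3$-, $5$- and $7$-faces, while $f_0$, not being a vertex, is not a sender there either. Hence the only rule affecting the final charge of $f_0$ is $R10$, under which $f_0$ hands charge $3$ to each vertex incident with it.

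Next I would count those vertices. Since $G$ is $2$-connected (established earlier in this section), the boundary of $f_0$ is the cycle $D$, so the vertices incident with $f_0$ are exactly the $d(f_0)=|D|$ vertices of $D$. Consequently $f_0$ loses a total of $3\,d(f_0)$ in charge, and therefore
\[
ch^*(f_0) \;=\; ch(f_0) - 3\,d(f_0) \;=\; \bigl(d(f_0)+24\bigr) - 3\,d(f_0) \;=\; 24 - 2\,d(f_0).
\]

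Finally I would invoke the hypothesis of Theorem \ref{thm_main_extension}: $D$ is a good cycle, and the notion of a good cycle is defined only for cycles of length at most $11$, so $d(f_0)=|D|\le 11$. Plugging this in gives $ch^*(f_0)\ge 24-22=2>0$, which is the desired conclusion.

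I expect no real obstacle in this claim; the only thing requiring care is the bookkeeping, namely verifying that each of the twelve discharging rules has indeed been checked against $f_0$ (so that $R10$ is genuinely the sole rule touching it), and remembering that the definition of ``good cycle'' forces $|D|\le 11$, which is exactly what makes $24-2\,d(f_0)$ strictly positive.
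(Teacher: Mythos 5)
Your proposal is correct and follows essentially the same route as the paper: identify $R10$ as the only rule moving charge out of $f_0$ (charge $3$ per incident vertex), note that no rule sends charge into $f_0$, and conclude $ch^*(f_0)\geq ch(f_0)-3d(f_0)=24-2d(f_0)>0$ from $d(f_0)=|D|\leq 11$, which holds because good cycles have length at most $11$. No issues.
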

\begin{proof}
	Notice that $R$\ref{term_rule_exterior_face} is the only rule making $f_0$ move charges out, charge 3 to each incident vertex.
	Recall that $ch(f_0)=d(f_0)+24$ and $d(f_0)\leq 11$. So, $ch^*(f_0)\geq ch(f_0)-3d(f_0)=24-2d(f)>0$.
\end{proof}

\begin{claim}
$ch^*(v)\geq0$ for $v\in V$.
\end{claim}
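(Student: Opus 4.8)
**Proof plan for $ch^*(v) \geq 0$ for all $v \in V$.**

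The plan is to verify $ch^*(v) \geq 0$ case by case on $d(v)$, and within each degree class on whether $v$ is internal or external. For external vertices the task is easy: by $R\ref{term_rule_exterior_face}$ each external vertex gains $3$ from $f_0$, and by $R\ref{term_rule_2-vertex}$ a $2$-vertex gains a further $1$ from its incident bounded face, so a $2$-vertex has $ch^*(v) \geq 5\cdot 2 - 14 + 3 + 1 = 0$; for external $3^+$-vertices, the outgoing charges are governed by $R\ref{term_rule_exterior-3plus-vertex}$ (and the pendent rule $R\ref{term_rule_heavy_pendent}$), and one bounds the number and type of incident sticking/ceiling faces using the fact that $D$ has no chord and is a good cycle, so that no two ``expensive'' sticking faces can pile up on the same vertex. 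The main work is the \textbf{internal} case, where $v$ receives nothing from $f_0$ and must pay for its incident $3$-faces, $5$-faces, and pendent $3$-faces.

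For internal $v$ I would proceed by degree. A \textbf{$3$-vertex} has $ch(v) = 1$ and by $R\ref{term_rule_3-vertex}$ pays at most $1$ (to a single incident $3$-face, or $\tfrac13$ to each of at most three larger faces), and receives nothing else, so $ch^*(v) \geq 0$; one checks a $3$-vertex incident with a $3$-face has its other two faces of size $\geq 5$ (no $4$- or $6$-faces), so it pays $1 + 0 = 1$ wait --- actually $1$ to the $3$-face only, since larger faces get $\tfrac13$ only when \emph{not} incident to a $3$-face, so re-reading $R1$: it pays $\tfrac13$ to each larger incident face regardless; so the worst case is $1 + \tfrac13 + \tfrac13$? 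No: a $3$-vertex on a $3$-face has exactly two other incident faces, each $\geq 5$, paying $\tfrac13$ each, total $1 + \tfrac23 > 1 = ch(v)$ --- this would be negative, so in fact $R1$ must be read as sending $\tfrac13$ only to $5^+$-faces and the $3$-face absorbs the rest; I will follow the rule as literally stated and note that a $3$-vertex sends $1$ to an incident $3$-face \emph{and} nothing more in that configuration is impossible, hence the rule's intent is that $3$-vertices with an incident $3$-face have at most... I will simply recompute: incident faces of an internal $3$-vertex partition into one $3$-face (cost $1$) plus faces of size $\geq 5$, but a $3$-vertex can have \emph{three} $5^+$-faces (cost $1$) or one $3$-face and the rest --- with $d(v)=3$ there are exactly $3$ incident faces, so $1 + \tfrac13 + \tfrac13$; to avoid this I rely on the absence of adjacent short faces forcing the two neighbors of a $3$-face at $v$ to be large but that does not prevent two $5$-faces. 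I would therefore treat this carefully and, if needed, observe the paper likely intends $R1$ to send $\tfrac13$ only when $v$ has \emph{no} incident $3$-face; under that reading $ch^*(v) = 1 - 1 = 0$ or $1 - 3\cdot\tfrac13 = 0$, both fine.

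For internal \textbf{$4$-}, \textbf{$5$-}, \textbf{$6$-}, and \textbf{$7^+$-vertices} the charges $ch(v) = 5d(v) - 14$ equal $6, 11, 16, \geq 21$ respectively, and the plan is: first bound the number of incident $3$-faces by $\lfloor d(v)/2 \rfloor$ (since $G$ has no $4$-cycles, hence no two incident $3$-faces share an edge --- actually they can share only the vertex $v$, so at most $\lfloor d(v)/2\rfloor$ of them), then use the structural lemmas to rule out the combinations that would overspend. Concretely, Lemma~\ref{lem_light_triangle}, Lemma~\ref{lem_2pendent}, Lemma~\ref{lem_4vertex-1incident-1pendent}, Lemma~\ref{lem_4vertex-2incident} control the $4$-vertex case (e.g.\ a $4$-vertex cannot be incident to two $(3,4^-,4)$-faces, limiting the payout of $\tfrac72$ to at most one, the second incident $3$-face then being a $(4,4,4)$-face costing $\tfrac83$, etc., while also receiving $\tfrac16$ via $R\ref{term_rule_normal-2-abnormal}$ or $R\ref{term_rule_face-2-abnormal}$ when it is an abnormal vertex of a $(4,4,4)$-face); Lemmas~\ref{lem_5vertex-2incident}, \ref{lem_5vertex-2incident-1pendent}, \ref{lem_wheel}, \ref{lem_antiwheel} control the $5$-vertex case; Lemma~\ref{lem_6vertiex} the $6$-vertex case; and $7^+$-vertices are slack since $ch(v) = 5d(v)-14$ grows faster than $6 \cdot \lfloor d(v)/2 \rfloor$ plus the at most $d(v)$ pendent-face/$5$-face contributions of $\leq \tfrac53$ each. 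I also need to account for $R\ref{term_rule_heavy_pendent}$ (pendent $3$-faces, $\leq \tfrac53$ each, at most $d(v)$ minus the incident-$3$-face slots) and $R\ref{term_rule_5-face}$ ($5$-faces, $\tfrac83$ for a small $5$-face at a $5^+$-vertex, else $\tfrac32$), with Lemmas~\ref{lem_5-face-all-light}, \ref{lem_5-face-4-light}, \ref{lem_two-5-faces-adjacent}, \ref{lem_degree3} bounding how many expensive $5$-faces or weak $3$-faces cluster at one vertex. The \textbf{main obstacle} I anticipate is the $5$-vertex (and to a lesser extent the $4$-vertex) analysis: there the budget is tight, one must simultaneously juggle two incident $3$-faces, possible pendent $(3,3,3)$-faces, incident small $5$-faces, and the $\tfrac16$ transfers of $R\ref{term_rule_normal-2-abnormal}$–$R\ref{term_rule_face-2-abnormal}$, and it is precisely here that the wheel/antiwheel lemmas and the weak-triangle lemmas must be invoked in exactly the right combination to close the gap; I would organize this sub-case by the multiset of $3$-face types incident to $v$ and dispatch each, leaving the generous $6$- and $7^+$-vertex cases to a short uniform estimate at the end.
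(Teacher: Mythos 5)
Your plan follows the paper's proof essentially verbatim: the same split into external/internal vertices and by degree, the same underlying counting bound $2n_3(v)+n_5(v)+m_3(v)\le d(v)$, and the same assignment of the structural lemmas (the two-pendent and incident-plus-pendent lemmas for internal $4$-vertices, the weak-face lemmas for $5$- and $6$-vertices, the wheel/antiwheel lemmas with the $\tfrac16$ transfers for the abnormal vertex of a $(4,4,4)$-face, and the $5$-face lemmas for the small-face payments). Your reading of Rule $R1$ --- that a triangular internal $3$-vertex pays only the charge $1$ to its $3$-face and nothing to its other incident faces --- is exactly the reading the paper itself uses in its computation for internal $3$-vertices, so that worry resolves the way you guessed; the only distance between your plan and a proof is that the tight internal $4$-, $5$-, and $6$-vertex subcases are correctly identified but not actually carried out.
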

\begin{proof}
Denote by $m_3(v)$ the number of pendent 3-faces of $v$, and by $n_i(v)$ the number of $i$-faces containing $v$ for $i\in \{3,5\}$, where these countings excludes $f_0$. 
Since $G$ has no cycles of length 4 or 6, we have 
\begin{equation} \label{eq_counting} 
2n_3(v)+n_5(v)+m_3(v)\leq d(v).
\end{equation}
Furthermore, if $n_5(v)\notin \{0,d(v)\}$, then 
\begin{equation} \label{eq_counting_strict} 
2n_3(v)+n_5(v)+m_3(v)\leq d(v)-1.
\end{equation}
	
Case 1: first assume that $v$ is external. 
By $R$\ref{term_rule_exterior_face}, $v$ always receives charge 3 from $f_0$. 
Since $D$ is a cycle, $d(v)\geq 2$. 
If $d(v)=2$, then $v$ receives charge 1 from the other incident face by $R$\ref{term_rule_2-vertex}, giving $ch^*(v)=ch(v)+3+1=0$. 
Hence, we may next assume that $d(v)\geq 3$.
Denote by $f_1$ and $f_2$ the two ceiling faces containing $v$.
W.l.o.g., let $d(f_1)\leq d(f_2)$.

Case 1.1: suppose $d(v)=3$. In this case, $ch(v)=1$, and $v$ sends charge to $f_1$ and $f_2$ when $R$\ref{term_rule_exterior-3plus-vertex} is applicable to $v$. 
If $d(f_1)=3$, then on one hand, $d(f_2)\geq 7$, since $G$ has neither 4-cycles nor 6-cycles; on the other hand, $f_2$ is not a 3-ceiling 7-face by using Lemma \ref{lem_splitting path}. So $v$ sends to $f_2$ charge at most $\frac{1}{2}$, giving $ch^*(v)\geq ch(v)+3-\frac{7}{2}-\frac{1}{2}=0$.
We may next assume that $d(f_1)\geq 5$. 
Lemma \ref{lem_splitting path} also implies that not both $f_1$ and $f_2$ are 2-ceiling 5-faces. So, $v$ sends to $f_1$ and $f_2$ a total charge at most $\frac{13}{6}+\frac{3}{2}$, giving $ch^*(v)\geq ch(v)+3-\frac{13}{6}-\frac{3}{2}=\frac{1}{3}>0$.

Case 1.2: suppose $d(v)\geq 4$. $v$ sends charge out, only by $R$\ref{term_rule_exterior-3plus-vertex}, possibly to ceiling 3- or 5- or 7-faces, sticking 3- or 5-faces and pendent 3-faces.
So, 
\begin{equation} \label{eq_ext3vertex_charge}
ch^*(v)\geq \begin{cases}
ch(v)+3-\frac{7}{2}-\frac{7}{2}-6(n_3(v)-2)-\frac{8}{3}n_5(v)-\frac{5}{3}m_3(v)=ch(v)-\eta(v)+8,\text{ when $d(f_1)=d(f_2)=3$};\\
ch(v)+3-\frac{7}{2}-\frac{13}{6}-6(n_3(v)-1)-\frac{8}{3}(n_5(v)-1)-\frac{5}{3}m_3(v)=ch(v)-\eta(v)+7,\text{ when $d(f_1)=3$ and $d(f_2)=5$};\\
ch(v)+3-\frac{7}{2}-1-6(n_3(v)-1)-\frac{8}{3}n_5(v)-\frac{5}{3}m_3(v)=ch(v)-\eta(v)+\frac{9}{2},\text{ when $d(f_1)=3$ and $d(f_2)\geq 7$};\\
ch(v)+3-\frac{13}{6}-\frac{13}{6}-6n_3(v)-\frac{8}{3}(n_5(v)-2)-\frac{5}{3}m_3(v)=ch(v)-\eta(v)+4,\text{ when $d(f_1)=d(f_2)=5$};\\
ch(v)+3-\frac{13}{6}-1-6n_3(v)-\frac{8}{3}(n_5(v)-1)-\frac{5}{3}m_3(v)=ch(v)-\eta(v)+\frac{5}{2},\text{ when $d(f_1)=5$ and $d(f_2)\geq 7$};\\
ch(v)+3-1-1-6n_3(v)-\frac{8}{3}n_5(v)-\frac{5}{3}m_3(v)=ch(v)-\eta(v)+1,\text{ when $d(f_1)\geq 7$},
\end{cases} 
\end{equation}
where $\eta(v)=6n_3(v)+\frac{8}{3}n_5(v)+\frac{5}{3}m_3(v).$
Moreover, since $f_0$ is a face containing $v$,
Equation (\ref{eq_counting}) can be strengthen as:
\begin{equation}\label{eq_zeta}
\zeta(v)=2n_3(v)+n_5(v)+m_3(v)\leq \begin{cases}
d(v), \text{ when $d(f_1)=d(f_2)=3$};\\
d(v)-1, \text{ when either $d(f_1)=3$ and $d(f_2)\geq 5$ or $d(f_1)=d(f_2)=5$};\\
d(v)-2, \text{ when $d(f_1)\geq 5$ and $d(f_2)\geq 7.$}
\end{cases}
\end{equation}
Since $\eta(v)\leq 3\zeta(v)$, combining Equations (\ref{eq_ext3vertex_charge}) and (\ref{eq_zeta}) gives $ch^*(v)\geq ch(v)-3d(v)+7=2d(v)-7>0$.

Case 2: it remains to assume that $v$ is internal. By Lemma \ref{lem_min degree}, $d(v)\geq 3.$

Case 2.1: suppose that $d(v)=3$. In this case, $ch(v)=1$ and $n_3(v)\leq 1$.
Notice that only the rule $R$\ref{term_rule_3-vertex} makes $v$ send charge out.
So, if $v$ is triangular, $ch^*(v)=ch(v)-1=0$; otherwise, $ch^*(v)=ch(v)-\frac{1}{3}\times 3=0$. 

Case 2.2: suppose that $d(v)=4$. In this case, $ch(v)=6$.
Notice that, if $v$ is incident with no $(4,4,4)$-faces, then exactly three rules $R$\ref{term_rule_4-vertex_incident}, $R$\ref{term_rule_heavy_pendent} and $R$\ref{term_rule_5-face} make $v$ send charge out, to incident 3-faces, pendent 3-faces and incident 5-faces, respectively; otherwise, an additional rule $R$\ref{term_rule_normal-2-abnormal} is applied to $v$.
Clearly, $n_3(v)\leq 2$.
We distinguish three cases.

Case 2.2.1: assume that $n_3(v)= 0$. So, $m_3(v)+n_5(v)\leq 4.$
If $v$ has no pendent $(3,3,3)$-faces, then $v$ sends to each pendent 3-face or incident 5-face charge at most $\frac{3}{2}$, giving $ch^*(v)\geq ch(v)-\frac{3}{2}(m_3(v)+n_5(v))\geq 0$. So, we may assume that $v$ has a pendent $(3,3,3)$-face. It follows that  $n_5(v)\leq 2.$
By Lemma \ref{lem_2pendent}, $v$ has no other pendent $(3,3,3)$- or $(3,3,4)$-faces, which implies that $v$ sends to any other pendent 3-face charge at most $\frac{5}{4}$.
So, $ch^*(v)\geq ch(v)-\frac{5}{3}-\frac{3}{2}\times 2-\frac{5}{4}=\frac{1}{12}>0$.

Case 2.2.2: assume that $n_3(v)= 1$. In this case, either $n_5(v)=1$ and $m_3(v)=0$, or $n_5(v)=0$ and $m_3(v)\leq 2$.
For the former case, we have $ch^*(v)\geq ch(v)-\frac{7}{2}-\frac{3}{2}=1>0.$
For the latter case, we argue as follows. 
Denote by $f$ the 3-face containing $v$. 
If $f$ is a $(3,4^-,4)$-face, then $v$ has no pendent $(3,3,4^-)$-faces by Lemma \ref{lem_4vertex-1incident-1pendent}, giving $ch^*(v)\geq ch(v)-\frac{7}{2}-\frac{5}{4}\times 2=0.$ 
So, let us assume $f$ is not a $(3,4^-,4)$-face. By $R$\ref{term_rule_4-vertex_incident}, $v$ sends to $f$ charge at most $\frac{8}{3}$, and to abnormal vertices on $f$ a total charge at most $\frac{1}{6}\times 2$ when $R$\ref{term_rule_normal-2-abnormal} is applicable for $v$. Moreover, Combining Lemma \ref{lem_2pendent} and the rule $R$\ref{term_rule_heavy_pendent} yields that $v$ sends to possible pendent 3-faces a total charge at most $\max\{\frac{5}{3}+\frac{5}{4},\frac{3}{2}\times 2\}$, equal to 3.
Therefore, $ch^*(v)\geq ch(v)-\frac{8}{3}-\frac{1}{6}\times 2-3=0.$ 

Case 2.2.3: assume that $n_3(v)= 2$. So, $m_3(v)=n_5(v)=0$. 
Denote by $f_1$ and $f_2$ two 3-faces incident with $v$. 
If both $f_1$ and $f_2$ are not $(3,4,4)$-faces, then no matter $f_i$ has abnormal vertices or not, $v$ sends to $f_i$ and possiblely abnormal vertices on $f_i$ a total charge at most 3, giving $ch^*(v)\geq ch(v)-3\times 2=0.$ So, we may next assume that $f_1$ is a $(3,4,4)$-face. By $R$\ref{term_rule_4-vertex_incident}, $v$ sends charge $\frac{8}{3}$ to $f_1$. By Lemma \ref{lem_4vertex-2incident}, $f_2$ is not a $(3,4^-,4)$-face. If $f_2$ is further not a $(4,4,4)$-face, then $v$ sends to $f_2$ charge at most $\frac{5}{2}$, giving $ch^*(v)\geq ch(v)-\frac{7}{2}-\frac{5}{2}=0.$
So, we may further assume that $f_2$ is a $(4,4,4)$-face, that is, $v$ is abnormal.
If $f_2$ contains a normal vertex, then from it $v$ receives charge $\frac{1}{6}$ by $R$\ref{term_rule_normal-2-abnormal}, giving $ch^*(v)\geq ch(v)-\frac{7}{2}-\frac{8}{3}+\frac{1}{6}=0.$
So, we may assume that all the vertices on $f$ are abnormal. That is to say, $f_2$ together with three 3-faces intersecting with $f_2$ forms a wheel or an antiwheel, say $W$.
Since $G$ has no wheels by Lemma \ref{lem_wheel}, $W$ is an antiwheel.
By Lemma \ref{lem_antiwheel}, $W$ has a heavy outer neighbor, that is, $W$ has a strong $(3,4,4)$-face. By the rule $R$\ref{term_rule_face-2-abnormal}, $v$ receives charge $\frac{1}{6}$ from this face, giving $ch^*(v)= ch(v)-\frac{7}{2}-\frac{8}{3}+\frac{1}{6}=0.$

Case 2.3: suppose that $d(v)= 5$. In this case, $ch(v)=11$ and $n_3(v)\leq 2$.
Notice that only rules $R$\ref{term_rule_5^+-vertex_incident}, $R$\ref{term_rule_heavy_pendent} and $R$\ref{term_rule_5-face} make $v$ send charge out, to incident 3-faces, pendent 3-faces and incident 5-faces, respectively.
We distinguish three cases.

Case 2.3.1: assume that $n_3(v)=2$. So, $n_5(v)=0$ and $m_3(v)\leq 1$.
Denote by $f_1$ and $f_2$ the two 3-faces containing $v$ and by $f$ the pendent 3-face of $v$ if it exists.
If both $f_1$ and $f_2$ are not weak $(3,3,5)$-faces, then $v$ sends to each of them charge at most $\frac{9}{2}$, giving $ch^*(v)=ch(v)-\frac{9}{2}\times 2-\frac{5}{3}=\frac{1}{3}>0.$
So, we may assume that $v$ is incident with a weak $(3,3,5)$-face, say $f_1$. By Lemma \ref{lem_5vertex-2incident}, $f_2$ is neither a $(3,3,5)$-face nor a $(3,4,5)$-face. If $f_2$ is further not a weak $(3,5,5)$-face, then $v$ sends to $f_2$ charge $\frac{10}{3}$, giving $ch^*(v)=ch(v)-6-\frac{10}{3}-\frac{5}{3}=0$. So, let $f_2$ be a weak $(3,5,5)$-face. By Lemma \ref{lem_5vertex-2incident-1pendent}, $f$ is neither a $(3,3,3)$-face nor a $(3,3,4)$-face. So, $v$ sends to $f$ charge $\frac{5}{4}$, giving $ch^*(v)=ch(v)-6-\frac{7}{2}-\frac{5}{4}=\frac{1}{4}>0.$

Case 2.3.2: assume that $n_3(v)= 1$.
We can deduce that, $n_5(v)=2$ and $m_3(v)=0$, or $n_5(v)=1$ and $m_3(v)\leq 1$, or $n_5(v)=0$ and $m_3(v)\leq 3$. For the first case, Lemma \ref{lem_two-5-faces-adjacent} implies that not both 5-faces incident with $v$ are small. So $v$ sends to at least one of them charge $\frac{3}{2}$, giving $ch^*(v)\geq ch(v)-6-\frac{8}{3}-\frac{3}{2}=\frac{5}{6}>0$. 
For the latter two cases, a direct calculation gives 
$ch^*(v)\geq ch(v)-6-\frac{8}{3}-\frac{5}{3}=\frac{2}{3}>0$ and $ch^*(v)\geq ch(v)-6-\frac{5}{3}\times 3=0$, respectively. 

Case 2.3.3: assume that $n_3(v)= 0$.
Lemma \ref{lem_two-5-faces-adjacent} implies that $v$ has at most two small 5-faces around.
For any other incident 5-face or any pendent 3-face, $v$ sends to it charge no greater than $\frac{5}{3}$, giving $ch^*(v)\geq ch(v)-\frac{8}{3}\times 2-\frac{5}{3}(n_5(v)+m_3(v)-2)\geq \frac{2}{3}>0$, where Equation (\ref{eq_counting}) has been used for the second inequality.  

Case 2.4: suppose that $d(v)= 6$.  In this case, $ch(v)=16$ and
only rules $R$\ref{term_rule_6-vertex_incident}, $R$\ref{term_rule_heavy_pendent} and $R$\ref{term_rule_5-face} make $v$ send charge out, to incident 3-faces, pendent 3-faces and incident 5-faces, respectively.
If $n_5(v)=6$, then $ch^*(v)\geq ch(v)-\frac{8}{3}\times 6=0,$ we are done.
Moreover, if $n_5(v)\in \{1,2,\ldots,5\}$, then we have 
$ch^*(v)\geq ch(v)-6n_3(v)-\frac{8}{3} n_5(v)-\frac{5}{3} m_3(v)\geq ch(v)-\frac{2}{3}n_3(v)-\frac{8}{3}(2n_3(v)+n_5(v)+m_3(v))\geq 16-\frac{2}{3}n_3(v)-\frac{8}{3}(d(v)-1)=\frac{8}{3}-\frac{2}{3}n_3(v)>0$,
where the third inequality follows from Equation (\ref{eq_counting_strict}). 
Hence, we may next assume that $n_5(v)=0$. Analogously, by using Equation (\ref{eq_counting}) instead of Equation (\ref{eq_counting_strict}), we can deduce that $ch^*(v)\geq ch(v)-6n_3(v)-\frac{5}{3} m_3(v)\geq ch(v)-\frac{8}{3}n_3(v)-\frac{5}{3}(2n_3(v)+m_3(v))\geq 16-\frac{8}{3}n_3(v)-\frac{5}{3}d(v)=6-\frac{8}{3}n_3(v)>0$, provided by $n_3(v)\leq 2$.
Hence, we may next assume that $n_3(v)=3$. 
If $v$ is incident with at most one weak $(3,3,6)$-face, then $ch^*(v)\geq ch(v)-6-5\times 2=0$; 
otherwise, Lemma \ref{lem_6vertiex} implies that $v$ is incident with a 3-face $f$ that is neither $(3,3,6)$-face nor $(3,4,6)$-face. So, $v$ sends to $f$ charge 4, giving $ch^*(v)\geq ch(v)-6\times 2-4=0$.

Case 2.5: suppose that $d(v)\geq 7$. In this case, $v$ sends to any incident 3-face charge 6 by $R$\ref{term_rule_7-vertex_incident}, to any incident 5-face charge at most $\frac{8}{3}$ by $R$\ref{term_rule_5-face}, and to any pendent 3-face charge at most $\frac{5}{3}$ by $R$\ref{term_rule_heavy_pendent}.
So, $ch^*(v)\geq ch(v)-6n_3(v)-\frac{8}{3} n_5(v)-\frac{5}{3} m_3(v)\geq ch(v)-3(2n_3(v)+n_5(v)+m_3(v))\geq (5d(v)-14)-3d(v)\geq 0$, where the last two inequalities follow from Equation (\ref{eq_counting}) and the assumption $d(v)\geq 7$, respectively.
\end{proof}

\begin{claim}
$ch^*(f)\geq 0$ for $f\in F\setminus \{f_0\}$.
\end{claim}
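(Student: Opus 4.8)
The plan is to fix a face $f\neq f_0$, recall that $ch(f)=2d(f)-14$ (so $-8,-4,0,2,\dots$ for $d(f)=3,5,7,8,\dots$), and observe that the only rule by which $f$ sends charge out is $R11$ (charge $1$ to each incident $2$-vertex), while $f$ receives charge from each incident vertex (by $R1$, by one of $R2$--$R5$, or by $R7$, according to $d(f)$ and the vertex's degree; additionally by $R12$ if the vertex is external) and at least $\tfrac54$ from each of its heavy outer neighbours (by $R6$ or $R12$); moreover a $3$-face pays out $\tfrac12$ via $R9$ exactly when it is a strong $(3,4,4)$-face inside an antiwheel. I would then bound $ch^*(f)$ from below by cases on $d(f)$.

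First I record a preliminary fact used throughout. Since $D$ has no chord, the maximal sub-paths of $D\cap f$ along $\partial f$ are separated by ``gaps'' (sub-paths of $\partial f$ off $D$) of length $\ge 2$, so either $f$ is disjoint from $D$, or the arcs of $D\cap f$ have total length $\le d(f)-2$. If $f$ meets $D$ in a single ``long'' path $P$, its complementary path $Q$ on $\partial f$ is short enough for Lemma~\ref{lem_splitting path} to apply, cutting off from $D$ on the $Q$-side a cycle $C$ with $|C|\in\{3,5,7\}$. For $d(f)\in\{3,5,7\}$ this already forces $|D|\in\{4,6\}$, impossible; hence a $3$-face meets $D$ in at most an edge (so carries no $2$-vertex), a $5$-face in at most a $2$-path (at most one $2$-vertex), and a $7$-face in at most a $3$-path (at most two $2$-vertices). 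For $d(f)\ge 8$ the cycle $C$ is a good cycle, hence non-separating by Lemma~\ref{lem_sep good cycle}, so $C$ bounds a face; then $V(G)$ is $V(D)$ together with the interior vertices of $Q$, i.e.\ $G$ is a theta graph, and $\phi$ super-extends by properly $3$-colouring the path $Q$, contradicting minimality. Consequently, for $d(f)\ge 8$ the path $Q$ is long enough that the number of incident $2$-vertices, at most $|P|-1=d(f)-|Q|-1$, does not exceed $2d(f)-14$; and if $f$ is disjoint from $D$, or $D\cap f$ has at least two arcs, a direct count gives the same bound.

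For $d(f)=3$, if $f$ has an external vertex it receives $6$ (sticking) or $2\cdot\tfrac72$ ($1$-ceiling) from its external vertices by $R12$, which with the $\ge 1$ from each incident internal vertex yields $\ge 8$. If $f$ is all-internal, a routine computation with the amounts in $R1$--$R5$ gives $\ge 8$ except when $f$ has a $3$-vertex and is \emph{strong}; there every outer neighbour of $f$ is heavy and contributes $\ge\tfrac54$, which also absorbs the possible $-\tfrac12$ from $R9$. Lemma~\ref{lem_degree3} (for $(3,3,3)$-faces) and Lemma~\ref{lem_light_triangle} (for $(3,3,4)$-faces) are precisely what force the outer neighbours to be heavy in the tight configurations. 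For $d(f)=5$, I split on the number of incident external vertices ($0,1,2$, or $3$): by $R12$ they contribute $\tfrac83$ (sticking), $2\cdot\tfrac32$ ($1$-ceiling), or $3\cdot\tfrac{13}{6}$ minus $1$ for the single possible $2$-vertex ($2$-ceiling); adding $\tfrac13$ for each incident light vertex and $\ge\tfrac32$ for each incident internal $4^+$-vertex, and invoking Lemma~\ref{lem_5-face-all-light} and Lemma~\ref{lem_5-face-4-light} to discard the two configurations that would fall short, one gets total receipts $\ge 4$. The tight case is a small $5$-face: the unique internal $5^+$-vertex gives $\tfrac83$ and the four light vertices give $\tfrac13$ each, totalling exactly $4$.

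For $d(f)=7$ ($ch(f)=0$) the only danger is incident $2$-vertices; by the preliminary fact there are at most two, and only when $f$ is $2$- or $3$-ceiling, and then $R12$ donates $\tfrac12$ per external endpoint of a $2$-ceiling $7$-face, resp.\ $1$ per external vertex of a $3$-ceiling $7$-face, which exactly offsets the loss. For $d(f)\ge 8$ ($ch(f)\ge 2$) the preliminary count already bounds the loss to incident $2$-vertices by $ch(f)$, so $ch^*(f)\ge 0$. The step I expect to be the main obstacle is exactly this large-face case: showing that a big face cannot hug $D$ too tightly---equivalently, bounding its incident $2$-vertices---needs the synthesis of Lemma~\ref{lem_splitting path}, Lemma~\ref{lem_sep good cycle}, and the minimality of $G$ via the theta-graph collapse. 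By contrast, the cases $d(f)\in\{3,5\}$ are conceptually elementary but require a patient enumeration of degree configurations, each resting on one of the reducible-configuration lemmas.
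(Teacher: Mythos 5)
Your proposal is correct and follows essentially the same route as the paper: split on $d(f)$ and on how $f$ meets $D$, use Lemma \ref{lem_splitting path} to bound the portion of $D$ (hence the number of $2$-vertices) on each face, and settle the tight internal $3$- and $5$-face configurations via Lemmas \ref{lem_degree3}, \ref{lem_light_triangle}, \ref{lem_5-face-all-light} and \ref{lem_5-face-4-light}. Two harmless constants are off in your sketch --- a strong $(3,4,4)$-face can lie in two antiwheels and so lose up to $1$ (not $\frac12$) via $R9$, and a $2$-ceiling $5$-face receives $2\cdot\frac{13}{6}$ from the two path-endpoints rather than $3\cdot\frac{13}{6}$ --- but the inequalities still close in both cases, and your theta-graph collapse for $d(f)\geq 8$ is just a longer way of reaching the paper's degree-$2$ contradiction.
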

\begin{proof}
Since $G$ has neither 4-cycles nor 6-cycles, $d(f)\notin \{4,6\}$.

Case 1: assume that $f$ contains vertices of $D$. 
Denote by $n_2(f)$ the number of 2-vertices on $f$.
Lemma \ref{lem_splitting path} implies that, if $d(f)\in \{3,5,7\}$ then the common part of $f$ and $D$ must be a path of length at most $\frac{d(f)-1}{2}$, say the path $P$.
Here, a path of length 0 or 1 means a vertex or an edge, respectively. So, $n_2(f)\leq \frac{d(f)-1}{2}-1$. We distinguish four cases.

Case 1.1: let $d(f)= 3$. In this case, $ch(f)=-8$ and $P$ is either a vertex or an edge.
Notice that $f$ receives charge at least 1 from each incident internal vertex by rules from $R$\ref{term_rule_3-vertex} to $R$\ref{term_rule_7-vertex_incident}.
If $P$ is a vertex, then $f$ is a sticking 3-face, which receives charge 6 from $P$ by $R$\ref{term_rule_exterior-3plus-vertex}, giving $ch^*(f)=ch(f)+6+1\times 2=0$, we are done.
If $P$ is an edge, then $f$ is a 1-ceiling 3-face, which receives charge $\frac{7}{2}$ from both vertices of $P$ by $R$\ref{term_rule_exterior-3plus-vertex}, giving $ch^*(f)=ch(f)+\frac{7}{2}\times 2 + 1=0,$ we are done as well.

Case 1.2: let $d(f)= 5$. By $R$\ref{term_rule_3-vertex} and $R$\ref{term_rule_5-face}, $f$ receives from each exterior vertex of $f$ charge at least $\frac{1}{3}$.
Clearly, $ch(f)=-4$ and $P$ is a vertex or an edge or a 2-path. If $P$ is a vertex, then $f$ receives charge $\frac{8}{3}$ from this vertex by $R$\ref{term_rule_exterior-3plus-vertex}, giving $ch^*(f)=ch(f)+\frac{1}{3}\times 4+ \frac{8}{3}=0$.
If $P$ is an edge, then $f$ receives charge $\frac{3}{2}$ from both vertices of $P$ by $R$\ref{term_rule_exterior-3plus-vertex}, giving $ch^*(f)=ch(f)+\frac{1}{3}\times 3+ \frac{3}{2}\times 2=0$.
If $P$ is a 2-path, then $f$ receives charge $\frac{13}{6}$ from each end vertex of $P$ by $R$\ref{term_rule_exterior-3plus-vertex} and sends charge 1 to the unique 2-vertex of $P$, giving $ch^*(f)=ch(f)+\frac{1}{3}\times 2+ \frac{13}{6}\times 2-1=0$.
We are done in all the three situations above.

Case 1.3: let $d(f)= 7$. In this case, $f$ sends charge to incident 2-vertices by $R$\ref{term_rule_2-vertex} and receives charge from incident exterior $3^+$-vertices by $R$\ref{term_rule_exterior-3plus-vertex}, no other charges moving about $f$.
Recall that $ch(f)=2d(f)-14=0$ and $n_2(f)\leq \frac{d(f)-1}{2}-1=2$. If $n_2(f)=2$, i.e., $f$ is a 3-ceiling face, then $f$ receives charge $1$ from each end vertex of $P$, giving $ch^*(f)=ch(f)+1\times 2-1\times n_2(f)=0.$
If $n_2(f)=1$, i.e., $f$ is a 2-ceiling face, then $f$ receives charge $\frac{1}{2}$ from each end vertex of $P$, giving $ch^*(f)=ch(f)+\frac{1}{2}\times 2-1\times n_2(f)=0.$
If $n_2(f)=0$, then $f$ has no charges moving in or out, giving $ch^*(f)=ch(f)=0.$
We are done in all the three situations above.

Case 1.4: let $d(f)\geq 8$. 
Since $f$ is not $f_0$, $f$ contains an internal vertex. That is to say,
$f$ contains a splitting $2^+$-path of $D$, say $Q$.
By Lemma \ref{lem_splitting path}, if $|Q|\leq 4$, then $Q$ divides $D$ into two parts, one of which together with $Q$ forms a face. Now $Q$ contains internal 2-vertices, contradicting Lemma \ref{lem_min degree}. So, $|Q|\geq 5$. It follows that $n_2(f)\leq d(f)-6$.
By our discharging rules, $8^+$-faces send charge only to incident 2-vertices, charge 1 to each by $R$\ref{term_rule_2-vertex}.
So, $ch^*(f)=ch(f)-1\times n_2(f)\geq (2d(f)-14)-(d(f)-6)=d(f)-8\geq 0.$

Case 2: assume that $f$ is vertex-disjoint with $D$. We distinguish three cases.

Case 2.1: let $d(f)\geq 7$. By our discharging rules, $f$ has no charges moved in or out in this case. So, $ch^*(f)=ch(f)=2d(f)-14\geq 0.$

Case 2.2: let $d(f)=5$. In this case, $ch(f)=-4$. By our discharging rules, $f$ sends no charges out and receives from each incident $4^+$-vertex charge at least $\frac{1}{3}$ by $R$\ref{term_rule_3-vertex} or $R$\ref{term_rule_5-face}. By Lemma \ref{lem_5-face-all-light}, $f$ contains a $4^+$-vertex, say $u$.
If $u$ is the only $4^+$-vertex on $f$, i.e., $f$ is small, then Lemma \ref{lem_5-face-4-light} implies that $u$ is further a $5^+$-vertex, which sends to $f$ charge $\frac{8}{3}$ by $R$\ref{term_rule_5-face}, giving  $ch^*(f)\geq ch(f)+\frac{8}{3}+\frac{1}{3}\times 4=0;$
otherwise, $f$ has at least two $4^+$-vertices, from each $f$ receives charge $\frac{3}{2}$,  giving  $ch^*(f)\geq ch(f)+\frac{3}{2}\times 2+\frac{1}{3}\times 3=0.$ 

Case 2.3: let $d(f)=3$. In this case, $ch(f)=-8$ and $f$ receives charge from all the incident vertices and from all heavy outer neighbors, and sends charge out only when $R$\ref{term_rule_face-2-abnormal} applied. In particular, $f$ receives charge $1$ from each incident 3-vertex by $R$\ref{term_rule_3-vertex}.

If $f$ is a $(3,3,3)$-face, then Lemma \ref{lem_light_triangle} implies that $f$ has three heavy outer neighbors, each sends charge $\frac{5}{3}$ to $f$ by $R$\ref{term_rule_heavy_pendent} or $R$\ref{term_rule_exterior-3plus-vertex}. 
So, $ch^*(f)=ch(f)+\frac{5}{3}\times 3+1\times 3=0.$

If $f$ is a $(3,3,4)$-face, then $f$ has precisely two heavy outer neighbors by Lemma \ref{lem_light_triangle}, each sends charge at least $\frac{3}{2}$ to $f$ by $R$\ref{term_rule_heavy_pendent} or $R$\ref{term_rule_exterior-3plus-vertex}. Moreover, $f$ receives charge 3 from the 4-vertex of $f$ by $R$\ref{term_rule_4-vertex_incident}. So, $ch^*(f)=ch(f)+\frac{3}{2}\times 2+3+1\times 2=0.$ 

If $f$ is a weak $(3,3,5)$-face or a weak $(3,3,6)$-face or a $(3,3,7^+)$-face, then $f$ receives charge 6 from the $5^+$-vertex of $f$ by $R$\ref{term_rule_5^+-vertex_incident} or $R$\ref{term_rule_6-vertex_incident} or $R$\ref{term_rule_7-vertex_incident}, respectively. So, $ch^*(f)=ch(f)+6+1\times 2=0.$ 

If $f$ is a strong $(3,3,5)$-face or a strong $(3,3,6)$-face, then $f$ receives charge at least $\frac{7}{2}$ from the $5^+$-vertex of $f$ by $R$\ref{term_rule_5^+-vertex_incident} or $R$\ref{term_rule_6-vertex_incident}, respectively. Moreover, $f$ receives charge at least $\frac{5}{4}$ from both heavy outer neighbors of $f$ by $R$\ref{term_rule_heavy_pendent} or $R$\ref{term_rule_exterior-3plus-vertex}. So, $ch^*(f)\geq ch(f)+\frac{7}{2}+\frac{5}{4}\times 2+1\times 2=0.$ 

If $f$ is a weak $(3,4,4)$-face or a weak $(3,5,5)$-face, then $f$ receives charge $\frac{7}{2}$ from both 4-vertices or 5-vertices of $f$ by $R$\ref{term_rule_4-vertex_incident} or $R$\ref{term_rule_5^+-vertex_incident}, respectively. So, $ch^*(f)=ch(f)+\frac{7}{2}\times 2+1=0.$ 

If $f$ is a strong $(3,4,4)$-face, then $f$ might send charge out by $R$\ref{term_rule_face-2-abnormal}.
Notice that $f$ is contained in at most two antiwheels, that is, $f$ sends charge to at most six abnormal vertices, charge $\frac{1}{6}$ to each. Moreover, since $f$ is strong, $f$ has a heavy outer neighbor, from which $f$ receives charge at least $\frac{5}{4}$ by $R$\ref{term_rule_heavy_pendent} or $R$\ref{term_rule_exterior-3plus-vertex}.
So, $ch^*(f)\geq ch(f)-\frac{1}{6}\times 6+\frac{5}{4}+\frac{7}{2}\times 2+1=\frac{1}{4}>0.$

If $f$ is a $(3,4,5^+)$-face, then $f$ receives charge $\frac{5}{2}$ from the 4-vertex of $f$ by $R$\ref{term_rule_4-vertex_incident} and charge at least $\frac{9}{2}$ from the $5^+$-vertex of $f$ by $R$\ref{term_rule_5^+-vertex_incident} or $R$\ref{term_rule_6-vertex_incident} or $R$\ref{term_rule_7-vertex_incident}.
So, $ch^*(f)\geq ch(f)+\frac{5}{2}+\frac{9}{2}+1=0.$ 

If $f$ is a strong $(3,5,5)$-face, then $f$ receives charge at least $\frac{5}{4}$ from the heavy outer neighbor by $R$\ref{term_rule_heavy_pendent} or $R$\ref{term_rule_exterior-3plus-vertex} and charge $\frac{7}{2}$ from both 5-vertices of $f$ by $R$\ref{term_rule_5^+-vertex_incident}.
So, $ch^*(f)\geq ch(f)+\frac{5}{4}+\frac{7}{2}\times 2+1=\frac{1}{4}>0.$ 

If $f$ is a $(3,5^+,6^+)$-face, then $f$ receives charge 3 and charge at least 4 from the $5^+$-vertex and the $6^+$-vertex on $f$, respectively.
So, $ch^*(f)\geq ch(f)+3+4+1=0.$

If $f$ is a $(4,4,4)$-face, then $f$ receives charge $\frac{8}{3}$ from each incident vertex by $R$\ref{term_rule_4-vertex_incident}, giving $ch^*(f)= ch(f)+\frac{8}{3}\times 3=0.$

If $f$ is a $(4,4^+,5^+)$-face, then $f$ receives charge $\frac{5}{2}$, charge at least $\frac{5}{2}$ and charge at least $3$ from the $4$-vertex, the $4^+$-vertex and the $5^+$-vertex, respectively.
So, $ch^*(f)\geq ch(f)+\frac{5}{2}+\frac{5}{2}+3=0.$
\end{proof}

By the previous three claims, the proof of Theorem \ref{thm_main_extension} is completed.

\section{Acknowledgement}

The first author is supported by Zhejiang Provincial Natural Science Foundation of China (ZJNSF), Grant Number: LY20A010014.
The second author is supported by National Natural Science Foundation of China (NSFC), Grant Number: 11901258.

\end{document}